\documentclass{amsart}
\usepackage{amsmath,amssymb}
\title{Exact sequences of Frobenius tensor categories}

\author[T. Shibata]{Taiki Shibata}
\address[T. Shibata]{Department of Applied Mathematics,
  Okayama University of Science \\
  1-1 Ridai-cho, Kita-ku Okayama-shi, Okayama 700-0005, Japan.}
\email{shibata@ous.ac.jp}
\author[K. Shimizu]{Kenichi Shimizu}
\address[K. Shimizu]{Department of Mathematical Sciences,
  Shibaura Institute of Technology \\
  307 Fukasaku, Minuma-ku, Saitama-shi, Saitama 337-8570, Japan.}
\email{kshimizu@shibaura-it.ac.jp}

\keywords{coalgebra, Hopf algebra, Nakayama functor, locally finite abelian category, Frobenius tensor category}
\subjclass[2020]{18M05, 16T05}

\date{}

\usepackage{mathtools}
\usepackage[setpagesize=false]{hyperref}
\usepackage{tikz}
\usepackage{tikz-cd}
\usetikzlibrary{calc}
\usepackage{accents}
\usepackage{amsthm}
\numberwithin{equation}{section}
\theoremstyle{plain}
\newtheorem{C}{}[section] 
\newtheorem{lemma}[C]{Lemma}

\newtheorem{theorem}[C]{Theorem}

\theoremstyle{definition}
\newtheorem{definition}[C]{Definition}
\theoremstyle{remark}
\newtheorem{remark}[C]{Remark}
\newtheorem{example}[C]{Example}
\newcommand{\id}{\mathrm{id}}
\newcommand{\op}{\mathrm{op}}
\newcommand{\cop}{\mathrm{cop}}
\newcommand{\radj}{\mathrm{ra}}

\newcommand{\ladj}{\mathrm{la}}
\newcommand{\lladj}{\mathrm{lla}}

\newcommand{\bfk}{\Bbbk}

\newcommand{\Hom}{\mathrm{Hom}}

\newcommand{\Ker}{\mathrm{Ker}}
\newcommand{\Coker}{\mathrm{Coker}}
\newcommand{\Img}{\mathrm{Im}}
\newcommand{\Rep}{\mathrm{Rep}}

\newcommand{\coHom}{\mathrm{coHom}}

\newcommand{\rat}{\mathrm{rat}}

\newcommand{\unitobj}{\mathbf{1}}

\newcommand{\eval}{\mathrm{ev}}
\newcommand{\coev}{\mathrm{coev}}

\newcommand{\copow}{\otimes}

\newcommand{\Nak}{\mathbb{N}}

\newcommand{\Mod}{\mathfrak{M}}
\newcommand{\Vect}{\mathbf{Vec}}
\newcommand{\Sets}{\mathbf{Set}}
\newcommand{\Ind}{\mathrm{Ind}}
\newcommand{\fd}{\mathtt{fd}} 

\newcommand{\triv}{\mathrm{triv}}

\begin{document}

\begin{abstract}
  Given a tensor functor between tensor categories $\mathcal{C}$ and $\mathcal{D}$, we give criteria that, under certain assumptions, the Frobeniusness of $\mathcal{C}$ or $\mathcal{D}$ implies the Frobeniusness of the other one. 
  We also give an affirmative answer to Natale's question asking if the class of Frobenius tensor categories is closed under exact sequences.
\end{abstract}

\maketitle


\section{Introduction}

The algebra of functions on an affine group scheme is one of important examples of Hopf algebras from the beginning of the Hopf algebra theory.
When a Hopf algebra is viewed as an algebra of functions on a group, a cointegral on a Hopf algebra is explained as an abstraction of the linear functional on the space of functions defined by integrating a function with respect to a Haar measure.
A Hopf algebra admitting a non-zero cointegral is {\em co-Frobenius} as a coalgebra.
Co-Frobenius Hopf algebras has been studied extensively; see \cite{MR1786197} and references therein.

In the category of (co)modules over a Hopf algebra, one can form the tensor product and the dual of objects.
A {\em tensor category} \cite{MR3242743} generalizes such a situation.
Tensor categories appear many areas of mathematics and mathematical physics.
One of interesting directions of the study of tensor categories is to reconsider results on Hopf algebras from the viewpoint of tensor categories.
This approach also gives new insights to the theory of Hopf algebras.

A co-Frobenius Hopf algebra is characterized as a Hopf algebra $H$ such that every simple $H$-comodule has a finite-dimensional injective hull \cite{MR1786197}.
Accordingly, Andruskiewitsch, Cuadra and Etingof \cite{MR3410615} introduced the notion of a {\em Frobenius tensor category} as a tensor category whose every simple object admits an injective hull.
In \cite{2021arXiv211008739S}, we have extended some important results on co-Frobenius Hopf algebras to Frobenius tensor categories.
This paper aims to give further developments in this direction.
Specifically speaking, given a map $\phi: H \to K$ of Hopf algebras, Andruskiewitsch and Cuadra showed that, under certain assumptions, the co-Frobeniusness of $H$ or $K$ implies the co-Frobeniusness of the other one \cite[Corollary 2.9]{MR3032811}.
In this paper, we formulate a generalization of their result in terms of tensor categories and tensor functors, and prove the following theorem:

\begin{theorem}[= Theorem \ref{thm:Frobenius-and-tensor-functors}]
  \label{thm:intro-co-Fb-1}
  Let $F: \mathcal{C} \to \mathcal{D}$ be a tensor functor between tensor categories $\mathcal{C}$ and $\mathcal{D}$, and let $G$ be an ind-adjoint of $F$, that is, a right adjoint of the functor $\Ind(\mathcal{C}) \to \Ind(\mathcal{D})$ induced by $F$, where $\Ind(-)$ means the ind-completion \cite{MR2182076}.
  Then the following hold:
  \begin{enumerate}
  \item [(a)] If $\mathcal{C}$ is Frobenius and $G$ is exact, then $\mathcal{D}$ is Frobenius.
  \item [(b)] If $\mathcal{D}$ is Frobenius and $G(\mathcal{D}) \subset \mathcal{C}$, then $\mathcal{C}$ is Frobenius.
  \end{enumerate}
\end{theorem}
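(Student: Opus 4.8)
The plan is to reduce Frobeniusness to the existence of \emph{finite} injective or projective hulls and then track these through the adjunction. I will use the reformulations established in \cite{2021arXiv211008739S}: a tensor category $\mathcal{A}$ is Frobenius if and only if it has enough finite injectives (every object embeds into an injective object lying in $\mathcal{A}$), if and only if it has enough finite projectives, if and only if the Nakayama functor $\Nak_{\mathcal{A}}$ of the locally finite abelian category $\mathcal{A}$ restricts to an auto-equivalence of $\mathcal{A}$. I would also record the formal input on the adjunction: writing $L:=\Ind(F)$, a tensor functor is exact and faithful, so $L$ is exact, faithful, cocontinuous, and carries $\mathcal{C}$ (the compact objects of $\Ind(\mathcal{C})$) into $\mathcal{D}$; hence $G$ is left exact, preserves all limits and all filtered colimits, the unit $\eta\colon\id\Rightarrow GL$ is a monomorphism, and — because $L$ is exact — $G$ sends injective objects to injective objects.

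\textbf{Part (b).} Let $T$ be a simple object of $\mathcal{C}$. Since $\mathcal{D}$ is Frobenius, $F(T)$ embeds into a finite injective object $I\in\mathcal{D}$. Applying the left exact functor $G$, which preserves injectives, and precomposing with the monomorphism $\eta_{T}$ produces a monomorphism $T\to G(I)$ whose target is injective in $\Ind(\mathcal{C})$ and, by the hypothesis $G(\mathcal{D})\subset\mathcal{C}$, lies in $\mathcal{C}$, hence is injective in $\mathcal{C}$. Thus every simple object of $\mathcal{C}$ has a finite injective hull, so $\mathcal{C}$ is Frobenius.

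\textbf{Part (a).} Here the exactness of $G$ is used in two ways: it forces $L$ to preserve projective objects — so, since $L$ also preserves compacts, $L$ carries finite projectives of $\mathcal{C}$ to finite projectives of $\mathcal{D}$ — and it makes $G$ cocontinuous, hence equipped with a right adjoint $R$, giving a chain $L\dashv G\dashv R$. I would first dispose of the case where $F$ is surjective (dominant). Then $G$ is faithful, so for every simple object $S$ of $\mathcal{D}$ there is $X\in\mathcal{C}$ with $\Hom_{\mathcal{D}}(F(X),S)\neq 0$, i.e.\ an epimorphism $F(X)\twoheadrightarrow S$; composing with $F$ applied to a finite projective cover $P\twoheadrightarrow X$ (which exists because $\mathcal{C}$, being Frobenius, has enough finite projectives) realizes $S$ as a quotient of the finite projective object $F(P)$. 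Hence every simple object of $\mathcal{D}$ admits a finite projective cover, so $\mathcal{D}$ has enough finite projectives and is therefore Frobenius by the characterization recalled above. For a general $F$ I would factor it as $\mathcal{C}\xrightarrow{F_{1}}\mathcal{C}'\hookrightarrow\mathcal{D}$, where $\mathcal{C}'\subseteq\mathcal{D}$ is the tensor subcategory generated by the image of $F$ and $F_{1}$ is surjective; the previous case applies to $F_{1}$, and the remaining inclusion $\mathcal{C}'\hookrightarrow\mathcal{D}$ is the situation in which ``$G$ exact'' forces the relevant kernel to be semisimple, so that $\mathcal{D}$ is an extension of the Frobenius category $\mathcal{C}'$ by a semisimple one and remains Frobenius.

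The technical core — and the main obstacle — is in packaging the general case of (a): showing that the hypothesis ``$G$ exact'' distributes correctly over the factorization $\mathcal{C}\to\mathcal{C}'\hookrightarrow\mathcal{D}$, and handling the tensor-subcategory inclusion, together with the attendant bookkeeping of finiteness — neither $G$ nor $R$ preserves compact objects in general, so one cannot merely push finite (co)resolutions forward along $R$. Here the full strength of the Nakayama-functor description from \cite{2021arXiv211008739S} is what makes the argument go through. That the exactness hypothesis is indispensable, rather than a technical convenience, is already visible from $\mathrm{Res}\colon\Rep(\mathrm{SL}_{2})\to\Rep(\mathbb{G}_{a})$ in characteristic zero: the source is semisimple, hence Frobenius, and $\mathrm{Res}$ is surjective, yet $\Rep(\mathbb{G}_{a})$ is not Frobenius — and, consistently, its ind-adjoint is not exact.
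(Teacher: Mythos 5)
Your part (b) is essentially sound and close in spirit to the paper's argument: the paper simply observes that some injective $E\in\mathcal{D}$ must satisfy $G(E)\neq 0$ (else $G=0$ and hence $F=0$), and that $\Hom_{\mathcal{C}}(-,G(E))\cong\Hom_{\mathcal{D}}(F(-),E)$ makes $G(E)$ a non-zero injective of $\mathcal{C}$; your route via the monic unit $\eta_T\colon T\to GF(T)$ reaches the same conclusion with a little more work. The problem is part (a). Your reduction to the dominant case is a genuine gap: for a non-dominant $F$ you factor it as $\mathcal{C}\to\mathcal{C}'\hookrightarrow\mathcal{D}$ and then assert that exactness of $G$ ``forces the relevant kernel to be semisimple'' and that $\mathcal{D}$ is an extension of $\mathcal{C}'$ by a semisimple category. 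An inclusion $\mathcal{C}'\hookrightarrow\mathcal{D}$ has no kernel in the relevant sense, no such semisimplicity statement is proved (or true in general), and passing Frobeniusness from a tensor subcategory $\mathcal{C}'$ up to $\mathcal{D}$ along an inclusion with exact ind-adjoint is precisely an instance of the statement you are trying to prove, so the reduction is circular. You also concede, correctly, that you have not shown exactness of $G$ distributes over the two factors.

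The gap is avoidable, and the paper's proof shows how: you are working with the criterion ``enough finite projectives,'' which forces you to produce a projective cover of \emph{every} simple object of $\mathcal{D}$ and hence to invoke dominance. Lemma \ref{lem:Frobenius-criterions-0} gives the much cheaper criterion that a tensor category is Frobenius as soon as it possesses a \emph{single} non-zero projective object. With that, (a) is three lines: take any non-zero projective $P\in\mathcal{C}$ (it is also projective in $\Ind(\mathcal{C})$); the isomorphism $\Hom_{\mathcal{D}}(F(P),-)\cong\Hom_{\Ind(\mathcal{C})}(P,G(-))$ together with exactness of $G$ shows $F(P)$ is projective in $\mathcal{D}$, and faithfulness of $F$ shows $F(P)\neq 0$. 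No dominance, factorization, or projective covers of simples are needed. (Your first paragraph already records the reformulations from \cite{2021arXiv211008739S} but omits exactly this one-object criterion, which is the whole point of that lemma here.)
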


The notion of an exact sequence of tensor categories (see Subsection~\ref{subsec:exact-seq-ten-cat}) was introduced and studied by Brugui\`eres and Natale in \cite{MR2863377,MR3161401} as a generalization of an exact sequence of Hopf algebras.
Andruskiewitsch and Cuadra also showed that the class of co-Frobenius Hopf algebras is closed under exact sequences \cite[Theorem 2.10]{MR3032811}.
Recently, Natale asked if the same holds for exact sequences of tensor categories \cite[Question 6.12]{MR4281372}.
Our main result in this paper answers this question affirmatively and is stated as follows:

\begin{theorem}[= Theorem~\ref{thm:Frobenius-closed-under-exact-seq}]
  \label{thm:intro-co-Fb-2}
  Let $\mathcal{C}' \xrightarrow{\quad \iota \quad} \mathcal{C} \xrightarrow{\quad F \quad} \mathcal{D}$ be an exact sequence of tensor categories such that an ind-adjoint of $F$ is exact.
  Then $\mathcal{C}$ is Frobenius if and only if both $\mathcal{C}'$ and $\mathcal{D}$ are.
\end{theorem}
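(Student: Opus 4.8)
The plan is to deduce the statement from Theorem~\ref{thm:intro-co-Fb-1}, together with standard properties of exact sequences. Write the sequence as $\mathcal{C}'\xrightarrow{\iota}\mathcal{C}\xrightarrow{F}\mathcal{D}$, let $R$ be the right adjoint of $F$ furnished by the definition of an exact sequence, let $G$ be the ind-adjoint of $F$ (so $R$ is the restriction of $G$ to $\mathcal{D}$, and $G$ is exact by hypothesis), and set $A:=R(\unitobj_{\mathcal{D}})\in\Ind(\mathcal{C})$. I shall use that $F$ is exact, faithful and dominant, that $\Ker F=\mathcal{C}'$ is a tensor subcategory of $\mathcal{C}$ closed under subobjects and quotients, that the projection formula gives $RF(X)\cong X\otimes A$ for $X\in\mathcal{C}$, and that $A$ is an injective object of $\Ind(\mathcal{C}')$ (the regular algebra of the fibre $\mathcal{C}'$).

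For the implication ``$\mathcal{C}$ Frobenius $\Rightarrow$ $\mathcal{C}'$ and $\mathcal{D}$ Frobenius'': that $\mathcal{D}$ is Frobenius is immediate from Theorem~\ref{thm:intro-co-Fb-1}(a) applied to $F$, whose ind-adjoint $G$ is exact. For $\mathcal{C}'$ I apply Theorem~\ref{thm:intro-co-Fb-1}(b) to $\iota$: its ind-adjoint is the coreflection $\Ind(\mathcal{C})\to\Ind(\mathcal{C}')$, which sends an object $X\in\mathcal{C}$ to its largest subobject lying in $\mathcal{C}'$ --- this exists and has finite length since $\mathcal{C}'$, being closed under subobjects and quotients, contains the sum inside $X$ of any two of its members, and $X$ has finite length. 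Hence this ind-adjoint maps $\mathcal{C}$ into $\mathcal{C}'$, and Theorem~\ref{thm:intro-co-Fb-1}(b) gives that $\mathcal{C}'$ is Frobenius.

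For the converse, assume $\mathcal{C}'$ and $\mathcal{D}$ are Frobenius and fix a simple object $S\in\mathcal{C}$; the aim is to show that its injective hull $I(S)$ in $\Ind(\mathcal{C})$ has finite length. Since $\mathcal{D}$ is Frobenius, $F(S)$ has a finite-length injective hull $E$ in $\mathcal{D}$. The composite $S\to RF(S)=S\otimes A\to R(E)$ --- the unit of $F\dashv R$, then $R$ applied to the inclusion $F(S)\hookrightarrow E$ --- is a monomorphism, because $F$ is faithful and exact; and $R(E)=G(E)$ is injective in $\Ind(\mathcal{C})$, because $G$ is a right adjoint of the exact functor $\Ind(F)$ and $E$ is injective in $\Ind(\mathcal{D})$. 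Hence $I(S)$ is a direct summand of $R(E)$. By dominance $E$ embeds into, and being injective is a direct summand of, some $F(X)$ with $X\in\mathcal{C}$; so by the projection formula $R(E)$, and therefore $I(S)$, is a direct summand of $X\otimes A$. Finally, $\mathcal{C}'$ being Frobenius, the injective object $A$ of $\Ind(\mathcal{C}')$ decomposes as $\bigoplus_{\alpha}J_{\alpha}$ with each $J_{\alpha}$ a finite-length indecomposable injective of $\mathcal{C}'$, so that $X\otimes A\cong\bigoplus_{\alpha}(X\otimes J_{\alpha})$ is a direct sum of finite-length objects of $\mathcal{C}$; it remains to conclude that the indecomposable injective $I(S)$, a direct summand of such a direct sum, has finite length.

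This last conclusion is the crux, and it is not formal, since a direct summand of an infinite direct sum of finite-length objects need not be of finite length in general; I expect this to be the main obstacle. The plan is to exploit that $I(S)$ is simultaneously a direct summand of $R(E)$ --- which, by the exactness of $R$ and the $\mathcal{D}$-composition series of the finite-length object $E$, admits a finite filtration with subquotients of the form $R(V)$ for $V$ a simple object of $\mathcal{D}$ --- and of $\bigoplus_{\alpha}(X\otimes J_{\alpha})$; combining the finiteness of the $\mathcal{D}$-length of $E$ with the finer structure of the Frobenius category $\mathcal{C}'$ (projective covers, and the ensuing duality between its indecomposable injectives and projectives, whereby only finitely many $J_{\alpha}$ map nontrivially to a fixed finite-length object) should confine the inclusion $I(S)\hookrightarrow\bigoplus_{\alpha}(X\otimes J_{\alpha})$ to a finite sub-sum, and so force $I(S)$ to have finite length. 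This simultaneous use of the Frobeniusness of both $\mathcal{C}'$ and $\mathcal{D}$, together with the exactness of the ind-adjoint of $F$, is the heart of the argument, and it parallels Andruskiewitsch and Cuadra's treatment of exact sequences of Hopf algebras in \cite{MR3032811}.
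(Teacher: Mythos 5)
Your ``only if'' half is correct. For $\mathcal{D}$ it is exactly the paper's argument (Theorem~\ref{thm:Frobenius-and-tensor-functors}(a)). For $\mathcal{C}'$ your route differs from the paper's: you apply Theorem~\ref{thm:Frobenius-and-tensor-functors}(b) to $\iota$ after identifying its ind-adjoint with the coreflection sending $X$ to its largest subobject lying in $\mathcal{C}'$, which indeed lands in $\mathcal{C}'$ because $\Ker(F)$ is closed under subquotients (hence under finite sums of subobjects) and $X$ has finite length. That is a valid and arguably cleaner alternative to the paper's proof, which instead adapts Sullivan's argument: it takes an injective hull $E_0$ of $\unitobj$ in $\Ind(\mathcal{C}')$, then an injective hull $E$ of $E_0$ in $\Ind(\mathcal{C})$, and uses the Frobeniusness of $\mathcal{C}$ to force $E$, hence $E_0$, to have finite length.

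The ``if'' half contains a genuine gap, and it is precisely the step you flag. Everything up to ``$I(S)$ is an indecomposable injective direct summand of $\bigoplus_{\alpha}(X\otimes J_{\alpha})$'' is essentially sound, although you assert rather than prove that $A=G(\unitobj)$ lies in $\Ind(\mathcal{C}')$ (this requires the normality of $F$, Lemma~\ref{lem:normality-BN11-Prop-3-5}) and that $A$ is injective there (the paper proves this by splitting short exact sequences of trivial objects of $\mathcal{D}$). But the concluding deduction is not an argument: the proposal to ``confine the inclusion to a finite sub-sum'' by combining the finite filtration of $R(E)$ with projective covers in $\mathcal{C}'$ is not carried out, and it is exactly the nontrivial point, since a direct summand of an infinite direct sum of finite-length objects need not a priori have finite length. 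The step can be closed, but only by invoking the Krull--Remak--Schmidt--Azumaya theorem (each $X\otimes J_\alpha$ is a finite direct sum of indecomposables with local endomorphism rings, and such decompositions complement direct summands, so the indecomposable summand $I(S)$ must be isomorphic to one of these factors); you neither state nor prove anything of this kind. The paper avoids the difficulty altogether: by Lemma~\ref{lem:Frobenius-criterions} (which rests on the exactness of the Nakayama functor, Lemma~\ref{lem:Nakayama-exactness}, and the QcF criterion, Lemma~\ref{lem:Nakayama-QcF-2}), $\mathcal{C}$ is Frobenius as soon as there exist an injective $E\in\Ind(\mathcal{C})$ and a finite-length $V$ with $\Hom_{\Ind(\mathcal{C})}(E,V)\neq 0$; such a pair is produced as the composite epimorphism $G(P)\to G(\unitobj)\to E_i$, where $P$ is a projective cover of $\unitobj$ in $\mathcal{D}$ (which is also injective) and $E_i$ is a finite-length indecomposable summand of $G(\unitobj)$ in $\Ind(\mathcal{C}')$. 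Note that this lemma requires a nonzero morphism \emph{out of} an injective object into a finite-length object, whereas your monomorphism $S\to R(E)$ goes \emph{into} the injective, so your data cannot be fed into that lemma as they stand.
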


Andruskiewitsch and Cuadra proved the original Hopf algebraic version of Theorems~\ref{thm:intro-co-Fb-1} and \ref{thm:intro-co-Fb-2} based on the integral theory for Hopf algebras.
Their methods cannot be directly generalized to tensor categories because of the absence of the notion of cointegrals for tensor categories.
In this paper, we prove Theorems~\ref{thm:intro-co-Fb-1} and \ref{thm:intro-co-Fb-2} based on the theory of the Nakayama functor for locally finite abelian categories established in \cite{2021arXiv211008739S}.
We have demonstrated many results on Hopf algebras can be generalized to tensor categories by using the Nakayama functor in \cite{2021arXiv211008739S}.
In view of these successes, we might consider the Nakayama functor as a category-theoretical alternate for the notion of cointegrals.

Applications of Theorems~\ref{thm:intro-co-Fb-1} and \ref{thm:intro-co-Fb-2} are also given in this paper.
Let $\mathcal{C}$ be a semisimple braided tensor category, and let $H$ be a Hopf algebra in $\Ind(\mathcal{C})$ with invertible antipode.
We note that $H$ has a maximal quotient Hopf algebra $\overline{H} \in \Vect$, where $\Vect$ is the category of all vector spaces and regarded as a full subcategory of $\Ind(\mathcal{C})$.
Our first application concerns the Frobenius property of the tensor category $\mathcal{C}^H$ of right $H$-comodules whose underlying object belongs to $\mathcal{C}$.
As a generalization of the existence criterion for non-zero cointegrals on the coordinate superalgebra of an affine algebraic supergroup scheme \cite{MR4375528}, we give:

\begin{theorem}[= Theorem~\ref{thm:cointegral}]
  \label{thm:intro-3}
  Suppose that there is an object $W \in \mathcal{C}$ and an isomorphism $H \cong \overline{H} \otimes W$ of left $\overline{H}$-comodules.
  Then the tensor category $\mathcal{C}^H$ is Frobenius if and only if the Hopf algebra $\overline{H}$ is co-Frobenius.
\end{theorem}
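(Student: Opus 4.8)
The plan is to realize $\mathcal{C}^H$ as the middle term of an exact sequence of tensor categories and then invoke Theorem~\ref{thm:Frobenius-closed-under-exact-seq}. Let $\pi\colon H\to\overline{H}$ be the canonical surjection of Hopf algebras in $\Ind(\mathcal{C})$ and let $A:=H^{\mathrm{co}\,\overline{H}}$ be the object of $\overline{H}$-coinvariants of $H$ (the equalizer of $(\mathrm{id}\otimes\pi)\Delta$ and $\mathrm{id}\otimes u_{\overline{H}}$). First I would check that $A$ is a Hopf algebra in $\Ind(\mathcal{C})$, that $\unitobj\to A\hookrightarrow H\xrightarrow{\ \pi\ }\overline{H}\to\unitobj$ is an exact sequence of Hopf algebras in $\Ind(\mathcal{C})$, and that the hypothesis $H\cong\overline{H}\otimes W$ of left $\overline{H}$-comodules yields the two facts used below: (i) $H$ is faithfully coflat as a one-sided $\overline{H}$-comodule, since $(-)\coten_{\overline{H}}(\overline{H}\otimes W)\cong(-)\otimes W$ is exact and faithful ($W$ is nonzero and rigid); and (ii) using the bijectivity of the antipode to pass between left and right $\overline{H}$-comodule structures, $A$ is isomorphic as an object to a cotensor $W'\coten_{\overline{H}}\unitobj\cong W'$ for a finite object $W'$, so $A$ is a \emph{finite} Hopf algebra, i.e.\ its underlying object lies in $\mathcal{C}$.

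Next I would assemble the exact sequence. Let $\iota\colon\mathcal{C}^A\to\mathcal{C}^H$ send an $A$-comodule to the $H$-comodule obtained along $A\hookrightarrow H$, and let $F\colon\mathcal{C}^H\to\mathcal{C}^{\overline{H}}$ be the pushforward of comodule structures along $\pi$; both are tensor functors and $\iota$ is a full embedding. I claim that
\[
\mathcal{C}^A\xrightarrow{\ \iota\ }\mathcal{C}^H\xrightarrow{\ F\ }\mathcal{C}^{\overline{H}}
\]
is an exact sequence of tensor categories in the sense of Subsection~\ref{subsec:exact-seq-ten-cat}: $F$ is dominant by the faithful coflatness of (i); the preimage $F^{-1}(\langle\unitobj\rangle)$ of the trivial subcategory is exactly the class of $H$-comodules whose coaction factors through $V\otimes A$, that is, $\iota(\mathcal{C}^A)$, because $A=H^{\mathrm{co}\,\overline{H}}$; and the remaining axioms on adjoints follow from the standard dictionary between exact sequences of Hopf algebras and of their comodule categories, adapted to the relative setting over $\Ind(\mathcal{C})$. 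Moreover an ind-adjoint of $F$ is exact: the right adjoint of the pushforward $\Ind(\mathcal{C})^H\to\Ind(\mathcal{C})^{\overline{H}}$ is $(-)\coten_{\overline{H}}H$, which is exact by the coflatness of $H$ over $\overline{H}$ from (i). Hence Theorem~\ref{thm:Frobenius-closed-under-exact-seq} applies to this sequence.

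It remains to identify the Frobenius property of the two outer terms. On one side, $\mathcal{C}^A$ is \emph{always} Frobenius: since $A$ lies in $\mathcal{C}$, for each $X\in\mathcal{C}$ the cofree comodule $X\otimes A$ again lies in $\mathcal{C}$ and is injective in $\mathcal{C}^A$ (it is the value on the injective object $X$ of the right adjoint of the exact forgetful functor, and $\mathcal{C}$ is semisimple); every $V\in\mathcal{C}^A$ embeds into $V\otimes A$ via its coaction, so each simple object of $\mathcal{C}^A$ has an injective hull, realized as a subobject of a finite object. On the other side, since $\overline{H}\in\Vect$ and $\mathcal{C}$ is semisimple, decomposing objects into $\mathcal{C}$-isotypic components gives a tensor equivalence $\mathcal{C}^{\overline{H}}\simeq\mathcal{C}\boxtimes\mathrm{Comod}_{\mathrm{fd}}(\overline{H})$; as $\mathcal{C}$ is semisimple this category is Frobenius if and only if $\mathrm{Comod}_{\mathrm{fd}}(\overline{H})$ is, i.e.\ if and only if every simple $\overline{H}$-comodule has a finite-dimensional injective hull, i.e.\ if and only if $\overline{H}$ is co-Frobenius \cite{MR1786197}.

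Combining these with Theorem~\ref{thm:Frobenius-closed-under-exact-seq}: $\mathcal{C}^H$ is Frobenius if and only if both $\mathcal{C}^A$ and $\mathcal{C}^{\overline{H}}$ are, and since $\mathcal{C}^A$ is always Frobenius, this holds if and only if $\overline{H}$ is co-Frobenius. The main obstacle is the content of the first two paragraphs: deriving the exact sequence of Hopf algebras from the cofreeness hypothesis (in particular the faithful coflatness and the finiteness of $A$) and upgrading it to an exact sequence of tensor categories over $\Ind(\mathcal{C})$ that meets the precise definition required by Theorem~\ref{thm:Frobenius-closed-under-exact-seq}, including dominance of $F$ and the adjunction axioms in the relative setting; the Frobeniusness of the outer terms and the exactness of the ind-adjoint are comparatively routine.
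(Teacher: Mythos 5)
Your overall reduction of the Frobeniusness of $\mathcal{C}^{\overline{H}}$ to the co-Frobeniusness of $\overline{H}$, and your identification of the ind-adjoint of $F$ as $G = (-) \mathbin{\square}_{\overline{H}} H \cong (-)\otimes W$, agree with the paper. But the detour through Theorem~\ref{thm:Frobenius-closed-under-exact-seq} contains a genuine gap at its foundation: the coinvariant object $A = H^{\mathrm{co}\,\overline{H}}$ is in general only a left coideal subalgebra of $H$, not a Hopf subalgebra, so there is no exact sequence of Hopf algebras $\unitobj \to A \to H \to \overline{H} \to \unitobj$ and no comodule category $\mathcal{C}^{A}$ to serve as the left-hand term. (Already in the motivating super example $H \cong \overline{H}\otimes \bigwedge V$, the coproduct of an odd generator involves elements of $\overline{H}$, so $\Delta(A) \subset H \otimes A$ but not $A \otimes A$.) This undermines the step you rely on most heavily: the claim that the kernel term is \emph{always} Frobenius is argued via cofree $A$-comodules $X \otimes A$, which presupposes that $A$ is a coalgebra. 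Even if you replace $\mathcal{C}^{A}$ by $\Ker(F)$ itself (which is a legitimate tensor subcategory, and for which normality and dominance can indeed be checked via Lemma~\ref{lem:normality-BN11-Prop-3-5} and faithful coflatness), you would still owe an unconditional proof that $\Ker(F)$ is Frobenius; note that $\Ker(F)$ contains all of $\mathcal{C}$ with trivial coaction and need not be a finite tensor category, so this is not automatic.

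The paper avoids all of this by applying Theorem~\ref{thm:Frobenius-and-tensor-functors} directly to the single tensor functor $F: \mathcal{C}^{H} \to \mathcal{C}^{\overline{H}}$: the hypothesis $H \cong \overline{H}\otimes W$ gives $G(M) = M \mathbin{\square}_{\overline{H}} H \cong M \otimes W$, which simultaneously shows that $G$ is exact (yielding one direction via part (a)) and that $G(\mathcal{C}^{\overline{H}}) \subset \mathcal{C}^{H}$ (yielding the other via part (b)); the equivalence $\mathcal{C}^{\overline{H}} \approx (\Mod^{\overline{H}}_{\fd})^{\oplus \Lambda}$ then finishes the proof. If you want to keep your strategy, the essential missing ingredient is a proof that $\Ker(F)$ is Frobenius without assuming anything about $H$; as written, that step does not go through.
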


The second application concerns de-equivariantization.
Let $G$ be an affine group scheme, and let $\mathcal{C}$ be a tensor category equipped with a `central' inclusion functor $\Rep(G) \to \mathcal{C}$.
Then the coordinate algebra $\mathcal{O}(G)$ is a commutative algebra in the Drinfeld center of $\Ind(\mathcal{C})$ in a natural way and hence the category $\Ind(\mathcal{C})_{\mathcal{O}(G)}$ of $\mathcal{O}(G)$-modules in $\Ind(\mathcal{C})$ is a monoidal category with respect to the tensor product over $\mathcal{O}(G)$.
Let $\mathcal{D}$ be the full subcategory of finitely generated objects of $\Ind(\mathcal{C})_{\mathcal{O}(G)}$.
The category $\mathcal{D}$ is a monoidal full subcategory of $\Ind(\mathcal{C})_{\mathcal{O}(G)}$ (see Subsection~\ref{subsec:de-equiv} for more details).
We prove:

\begin{theorem}[= Theorem~\ref{thm:de-equivari}]
  \label{thm:intro-4}
  Suppose that the monoidal category $\mathcal{D}$ is a tensor category.
  Then the following hold:
  \begin{enumerate}
  \item $\mathcal{D}$ coincides with the full subcategory of finitely presented objects of $\Ind(\mathcal{C})_{\mathcal{O}(G)}$.
  \item $\Ind(\mathcal{D})$ is equivalent to $\Ind(\mathcal{C})_{\mathcal{O}(G)}$.
  \item There is an exact sequence $\Rep(G) \to \mathcal{C} \to \mathcal{D}$ of tensor categories.
  \item $\mathcal{C}$ is a Frobenius tensor category if and only if $\mathcal{O}(G)$ is co-Frobenius and $\mathcal{D}$ is Frobenius.
  \end{enumerate}
\end{theorem}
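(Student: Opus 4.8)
The plan is to establish (1) and (2) from general facts about locally finitely presented Grothendieck categories, to obtain (3) by recognizing our situation as the usual de-equivariantization picture carried out inside $\Ind(\mathcal{C})$, and to deduce (4) by feeding the resulting exact sequence into Theorem~\ref{thm:Frobenius-closed-under-exact-seq}.

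For (1), write $\mathcal{F}\colon \Ind(\mathcal{C}) \to \Ind(\mathcal{C})_{\mathcal{O}(G)}$ for the free-module functor $X \mapsto X \otimes \mathcal{O}(G)$ and $\mathcal{U}$ for its right adjoint, the forgetful functor. For $X \in \mathcal{C}$ the module $\mathcal{F}(X)$ is finitely generated, and it is finitely presented because $\Hom_{\mathcal{O}(G)}(\mathcal{F}(X),-) \cong \Hom_{\mathcal{C}}(X,\mathcal{U}(-))$, the functor $\mathcal{U}$ preserves filtered colimits, and $X$ is finitely presented in $\Ind(\mathcal{C})$. The key point is that $\mathcal{D}$ is closed under subobjects in $\Ind(\mathcal{C})_{\mathcal{O}(G)}$: if $M \in \mathcal{D}$ and $N \subseteq M$ is the directed union of its finitely generated submodules $N_\alpha$, then the quotients $M/N_\alpha$ lie in $\mathcal{D}$, which by hypothesis is a finite-length category, so the lengths $\ell(M/N_\alpha)$ form a directed family of non-negative integers; choosing $\alpha_0$ with $\ell(M/N_{\alpha_0})$ minimal makes every surjection $M/N_{\alpha_0} \twoheadrightarrow M/N_\alpha$ with $\alpha \geq \alpha_0$ an isomorphism, so $N = N_{\alpha_0}$ is finitely generated. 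Consequently, for any presentation $\mathcal{F}(X) \twoheadrightarrow M$ of an object of $\mathcal{D}$ the kernel lies in $\mathcal{D}$, hence is finitely generated, so $M$ is finitely presented; the reverse inclusion is immediate.

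For (2), the objects $\mathcal{F}(X)$, $X \in \mathcal{C}$, are finitely presented generators of the Grothendieck category $\Ind(\mathcal{C})_{\mathcal{O}(G)}$, so it is locally finitely presented with $\mathcal{D}$ as its subcategory of finitely presented objects by (1), and therefore $\Ind(\mathcal{D}) \simeq \Ind(\mathcal{C})_{\mathcal{O}(G)}$ by the universal property of the ind-completion. For (3), let $F\colon \mathcal{C} \to \mathcal{D}$ be the restriction of $\mathcal{F}$; it is a tensor functor since $\mathcal{O}(G)$ is a commutative algebra in the Drinfeld center of $\Ind(\mathcal{C})$, giving $\mathcal{F}(X) \otimes_{\mathcal{O}(G)} \mathcal{F}(Y) \cong \mathcal{F}(X \otimes Y)$; it is exact (tensoring with $\mathcal{O}(G)$ is exact) and dominant (every object of $\mathcal{D}$ is a quotient of some $F(X)$); and for $V \in \Rep(G)$ the centrality of $\iota$ yields $F(\iota(V)) \cong \unitobj^{\oplus \dim V}$ in $\mathcal{D}$, so $F \circ \iota$ is trivial. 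It then remains to check that $\iota$ identifies $\Rep(G)$ with the kernel of $F$ and that $F$ is normal — which one does by running the classical de-equivariantization argument within $\Ind(\mathcal{C})_{\mathcal{O}(G)}$ as furnished by (2) — and to note that the ind-adjoint of $F$ is the forgetful functor $\mathcal{U}$, which is exact and faithful.

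Finally, (4): by (3) we have an exact sequence $\Rep(G) \to \mathcal{C} \to \mathcal{D}$ whose ind-adjoint $\mathcal{U}$ is exact, so Theorem~\ref{thm:Frobenius-closed-under-exact-seq} gives that $\mathcal{C}$ is Frobenius if and only if $\Rep(G)$ and $\mathcal{D}$ are; and $\Rep(G)$, being the tensor category of finite-dimensional $\mathcal{O}(G)$-comodules, is Frobenius precisely when every simple $\mathcal{O}(G)$-comodule has a finite-dimensional injective hull, i.e.\ when $\mathcal{O}(G)$ is co-Frobenius, which combines to give the claim. The step I expect to be the main obstacle is (3): verifying in the present generality — where $\mathcal{O}(G)$ is infinite-dimensional, so that one must work throughout $\Ind(\mathcal{C})_{\mathcal{O}(G)}$ — all the defining conditions of an exact sequence of tensor categories, especially the normality of $F$ and the precise identification of $\Rep(G)$ with its kernel; the length argument in (1) that pins down $\mathcal{D}$ as the finitely presented objects, which is exactly where the hypothesis that $\mathcal{D}$ is a tensor category enters, is the other delicate point.
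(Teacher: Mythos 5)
Your overall architecture matches the paper's: (1) via the tensor-category hypothesis forcing kernels of presentations to be finitely generated, (2) via the standard recognition of a locally finitely presented Grothendieck category as an ind-completion, (3) by exhibiting the free-module functor as a dominant normal tensor functor with kernel $\Rep(G)$, and (4) by feeding the result into Theorem~\ref{thm:Frobenius-closed-under-exact-seq} together with the fact that $\Rep(G)$ is Frobenius iff $\mathcal{O}(G)$ is co-Frobenius. Your proof of (1) is a legitimate variant: instead of the paper's Lemma~\ref{lem:kernel-in-fgmod} (kernels exist in $\mathrm{mod}_{\mathcal{C}}(A)$ iff the ambient kernels are finitely generated), you show directly that $\mathcal{D}$ is closed under subobjects by a finite-length/minimality argument on the quotients $M/N_\alpha$; this works, since an epimorphism in $\Ind(\mathcal{C})_{\mathcal{O}(G)}$ between objects of the full subcategory $\mathcal{D}$ is an epimorphism in the abelian category $\mathcal{D}$, so equality of lengths forces it to be invertible.

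The genuine gap is in (3), at exactly the two points you defer to ``the classical de-equivariantization argument'': the inclusion $\Ker(F) \subset \Rep(G)$ and the normality of $F$. These are the entire content of (3) beyond formalities, and neither is supplied. Both close quickly, and you should supply them. For the kernel: if $F(X)$ is trivial, then $X \otimes \mathcal{O}(G) \cong \mathcal{O}(G)^{\oplus n}$ in $\Ind(\mathcal{C})_{\mathcal{O}(G)}$; since the unit $X \to X \otimes \mathcal{O}(G)$ of the free--forgetful adjunction is a split monomorphism (split by $\id_X \otimes \varepsilon$), $X$ embeds into $\mathcal{O}(G)^{\oplus n}$, which lies in $\Ind(\Rep(G))$, and a finite-length subobject of an object of $\Ind(\Rep(G))$ lies in $\Rep(G)$. (The reverse inclusion is the fundamental theorem for Hopf modules, not a consequence of centrality alone as you suggest; centrality is what makes $\otimes_{\mathcal{O}(G)}$ a monoidal structure.) For normality: you correctly identify the ind-adjoint of $F$ as the forgetful functor $U$, and $U(\unitobj_{\mathcal{D}}) = \mathcal{O}(G)$ lies in $\Ind(\Rep(G))$, so every finite-length subobject of $U(\unitobj_{\mathcal{D}})$ lies in $\Rep(G) = \Ker(F)$; normality then follows from Lemma~\ref{lem:normality-BN11-Prop-3-5} (which the paper proves for ind-adjoints precisely so that it applies here, where $\mathcal{O}(G)$ need not be a finite object). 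With these two paragraphs inserted, your argument is complete and coincides with the paper's.
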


We may call $\mathcal{D}$ of this theorem the {\em de-equivariantization} of $\mathcal{C}$ by $G$ ({\it cf}. \cite{MR2609644,MR4227163}).
Admittedly, in general, it is not easy to check whether $\mathcal{D}$ satisfies the assumption of this theorem.
Nevertheless, we think this theorem is noteworthy in view of future applications.
Indeed, the de-equivariantization construction (or `modularization') is used in \cite{2018arXiv180902116G,MR4227163} to construct new examples of non-semisimple modular tensor categories.

\subsection*{Organization of this paper}
This paper is organized as follows:
In Section \ref{sec:preliminaries}, we fix notations and recall from \cite{2021arXiv211008739S} basic results on the Nakayama functor for coalgebras and locally finite abelian categories.
Moreover, we provide technical lemmas characterizing quasi-co-Frobenius coalgebras in terms of the Nakayama functor (Lemmas \ref{lem:Nakayama-QcF} and \ref{lem:Nakayama-QcF-2}).

In Section \ref{sec:Fro-pro-ten-fun}, we prove Theorem \ref{thm:intro-co-Fb-1}.
We first recall the definition of Frobenius tensor categories from \cite{MR3410615} and quote some equivalent conditions for a tensor category to be Frobenius from \cite{2021arXiv211008739S} as Lemma \ref{lem:Frobenius-criterions-0}.
A useful criterion is that a tensor category is Frobenius if and only if it has a non-zero projective object, if and only if it has a non-zero injective object.
After discussing how the setting of \cite[Corollary 2.9]{MR3032811} is interpreted in terms of categories and functors, we prove Theorem \ref{thm:intro-co-Fb-1} by finding non-zero projective or injective objects by using an ind-adjoint of a tensor functor.

In Section \ref{sec:exact-seq-ten-cat}, we prove Theorem \ref{thm:intro-co-Fb-2}.
After recalling the definition of an exact sequence of tensor categories from  \cite{MR2863377,MR3161401}, we give a formula of the Nakayama functor for a tensor category and find that the Nakayama functor for a tensor category is exact (Lemma \ref{lem:Nakayama-exactness}).
By combining this result with Lemma \ref{lem:Nakayama-QcF-2}, we prove a key lemma stating that a tensor category $\mathcal{C}$ is Frobenius if and only if there are an injective object $E \in \Ind(\mathcal{C})$ and an object $V \in \mathcal{C}$ such that $\Hom_{\Ind(\mathcal{C})}(E, V) \ne 0$ (Lemma~\ref{lem:Frobenius-criterions}).
The `if' part of Theorem \ref{thm:intro-co-Fb-2} is proved by finding such objects $E$ and $V$ by using an ind-adjoint of a tensor functor.
The `only if' part follows from Theorem \ref{thm:intro-co-Fb-1} and a generalization of Sullivan's argument given in \cite{MR304418}.

In Section~\ref{sec:applications}, we give applications of our results.
Theorem \ref{thm:intro-3} is obtained by applying Theorem \ref{thm:intro-co-Fb-1} to the tensor functor $\mathcal{C}^H \to \mathcal{C}^{\overline{H}}$ induced by the quotient morphism $H \to \overline{H}$.
Theorem \ref{thm:intro-4} is proved in Subsection \ref{subsec:de-equiv}.
A large part of this subsection is rather devoted to explain that the category $\mathcal{D}$ in the statement of the theorem is a monoidal category.
Parts (1) and (2) of Theorem \ref{thm:intro-co-Fb-1} follow from basic results on ind-completions \cite{MR2182076} and the strong assumption of the theorem that $\mathcal{D}$ is a tensor category.
Part (3) is straightforward, and Part (4) follows immediately from Theorem \ref{thm:intro-4}.

Let $\mathcal{V}$ be a tensor category, and let $\mathbf{C}$ be a coalgebra in $\Ind(\mathcal{V})$.
In Appendix~\ref{sec:fundamental-theorem}, we show that the category $\Ind(\mathcal{V})^{\mathbf{C}}$ of all $\mathbf{C}$-comodules in $\Ind(\mathcal{V})$ is identified with the ind-completion of $\mathcal{V}^{\mathbf{C}}$ (Theorem \ref{thm:appendix-fund-thm-comodules}).
This theorem is used in the proof of Theorem \ref{thm:intro-3}.
Lyubashenko \cite[Section 2.3]{MR1625495} proved Theorem \ref{thm:appendix-fund-thm-comodules} in a general setting of squared coalgebras.
We include a direct proof of Theorem \ref{thm:appendix-fund-thm-comodules} as an appendix for the reader's convenience.

\subsection*{Acknowledgement}

The first author (T.S.) is supported by JSPS KAKENHI Grant Number JP22K13905.
The second author (K.S.) is supported by JSPS KAKENHI Grant Number JP20K03520.

\section{Preliminaries}
\label{sec:preliminaries}

\subsection{Basic notation}
\label{subsec:notation}

For the basics on category theory, we refer the book of Mac Lane \cite{MR1712872}. Given a category $\mathcal{C}$, we denote the opposite category of $\mathcal{C}$ by $\mathcal{C}^{\op}$.

Throughout this paper, we work over a field $\bfk$.
The category of all vector spaces (over the field $\bfk$) is denoted by $\Vect$.
The dual space of $X \in \Vect$ is denoted by $X^*$.
For $f \in X^*$ and $x \in X$, we often write $f(x) \in \bfk$ as $\langle f, x \rangle$.
By a (co)algebra, we always mean a (co)associative (co)unital (co)algebra over $\bfk$.
The comultiplication and the counit of a coalgebra $C$ are denoted by $\Delta$ and $\varepsilon$, respectively.
The Sweedler notation, such as $\Delta(c) = c_{(1)} \otimes c_{(2)}$, will be used to express the comultiplication of an element $c \in C$.
We note that $C^*$ is an algebra with respect to the convolution product defined by $\langle f g, c \rangle = \langle f, c_{(1)} \rangle \langle g, c_{(2)} \rangle$ for $f, g \in C^*$ and $c \in C$.

Given an algebra $A$, we denote by ${}_A \Mod$ and $\Mod_A$ the category of left and right $A$-modules, respectively.
Similarly, given a coalgebra $C$, we denote by ${}^C \Mod$ and $\Mod^C$ the category of left and right $C$-comodules, respectively.
The Hom functors of these four categories are written as ${}_A\Hom$, $\Hom_A$, ${}^C\Hom$ and $\Hom^C$, respectively.
We use the subscript `$\fd$' to mean the full subcategory consisting of all finite-dimensional objects.
For example, $\Mod^C_{\fd}$ means the category of finite-dimensional right $C$-comodules.

\subsection{Coalgebras and comodules}
\label{subsec:coalgebras}

For the basics on coalgebras and comodules, we refer the reader to \cite{MR1786197}.
Let $C$ be a coalgebra.
Given a right $C$-comodule $M$, we usually denote the coaction of $C$ on $M$ by $\delta_M: M \to M \otimes_{\bfk} C$ and express it as $\delta_M(m) = m_{(0)} \otimes m_{(1)}$ ($m \in M$).
A right $C$-comodule $M$ becomes a left $C^*$-module by the action given by $f \rightharpoonup m := m_{(0)} \langle f, m_{(1)} \rangle$ ($f \in C^*$, $m \in M$). A left $C^*$-module is said to be {\em rational} if it is obtained from a right $C$-comodule in this way. The {\em rational part} of $M \in {}_{C^*}\Mod$, denoted by $M^{\rat}$, is the maximal rational left $C^*$-module of $M$.
We may, and do, regard $M^{\rat}$ as a right $C$-comodule.
The rational part of a right $C^*$-module is defined in an analogous way.

We recall the following important classes of coalgebras:
\begin{enumerate}
\item A {\em left semiperfect coalgebra} is a coalgebra $C$ such that every simple left $C$-comodule has a projective cover in ${}^C\Mod$.
  A coalgebra $C$ is said to be {\em right semiperfect} if $C^{\cop}$ is left semiperfect (where $C^{\cop}$ is the opposite coalgebra).
  A {\em semiperfect coalgebra} is a left and right semiperfect coalgebra.
\item A {\em left quasi-co-Frobenius (QcF) coalgebra} is a coalgebra $C$ such that there is a cardinal $\kappa$ and an injective homomorphism $C \to (C^{*})^{\oplus \kappa}$ of left $C^*$-modules.
  A coalgebra $C$ is said to be {\em right QcF} if $C^{\cop}$ is left QcF.
  A {\em QcF coalgebra} is a left and right QcF coalgebra.
\item A {\em left co-Frobenius coalgebra} is a coalgebra $C$ such that there is an injective homomorphism $C \to C^{*}$ of left $C^*$-modules.
  A coalgebra $C$ is said to be {\em right co-Frobenius} if $C^{\cop}$ is left co-Frobenius.
  A {\em co-Frobenius coalgebra} is a left and right co-Frobenius.
\end{enumerate}

It is known that a left (right) co-Frobenius coalgebra is QcF, and a left (right) QcF coalgebra is left (right) semiperfect.
Most of results on these classes of coalgebras we will use in this paper are found in \cite{MR1786197}.
One exception is a characterization of QcF coalgebras given by Iovanov \cite[Lemma 1.7]{MR3150709}.

\subsection{Nakayama functor for coalgebras}

Let $C$ be a coalgebra.
Given a right $C$-comodule $M$, we regard $M$ as a left $C^*$-module in a way explained in Subsection~\ref{subsec:coalgebras} and define $\Nak_C(M) = C \otimes_{C^*} M$ as a vector space. There is a well-defined right $C$-coaction given by
\begin{equation*}
  \Nak_C(M) \to \Nak_C(M) \otimes_{\bfk} C,
  \quad c \otimes_{C^*} m \mapsto (c_{(1)} \otimes_{C^*} m) \otimes c_{(2)}
\end{equation*}
for $c \in C$ and $m \in M$.

\begin{definition}
  We define the {\em Nakayama functor} for $C$ by
  \begin{equation*}
    \Nak_C: \Mod^C \to \Mod^C, \quad M \mapsto C \otimes_{C^*} M.
  \end{equation*}
\end{definition}

Strictly speaking, this functor is the {\em right exact Nakayama functor} for $C$ introduced and investigated in \cite{2021arXiv211008739S}.
There is also a left exact variant, which is not needed in this paper.
For reader's convenience, we include some results on the Nakayama functor given in \cite{2021arXiv211008739S} as the following lemma:

\begin{lemma}
  \label{lem:Nakayama-basics}
  Let $C$ be a coalgebra. Then the following hold:
  \begin{enumerate}
  \item $\Nak_C$ has a right adjoint, and hence it preserves colimits.
  \item If $C$ is semiperfect, then the full subcategory $\Mod^C_{\fd}$ is closed under $\Nak_C$.
  \item $C$ is QcF if and only if $\Nak_C$ is an equivalence.
  \item $C$ is co-Frobenius if and only if $\Nak_C$ is an equivalence and preserves the dimension of simple right $C$-comodules.
  \end{enumerate}
\end{lemma}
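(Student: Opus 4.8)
The plan is to treat the four statements in order, since each later part builds on the earlier ones. For part (1), I would construct the right adjoint of $\Nak_C$ explicitly. The functor $M \mapsto C \otimes_{C^*} M$ is a composite of the forgetful functor $\Mod^C \to {}_{C^*}\Mod$ (which identifies $\Mod^C$ with the rational part of ${}_{C^*}\Mod$), the functor $C \otimes_{C^*} (-): {}_{C^*}\Mod \to \Mod^C$, and this latter functor has a right adjoint given by $N \mapsto {}^C\Hom(C, N)^{\rat}$ or, more directly, by $N \mapsto \Hom_{C^*}(C, N)$ suitably co-restricted; one checks the tensor-hom adjunction survives the rationalization. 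Since $\Nak_C$ has a right adjoint it preserves all colimits. This is the foundational fact, and I expect the only subtlety is bookkeeping with the rational-part functor and verifying that the counit/unit are comodule maps.

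For part (2), assume $C$ is semiperfect. The key input is that semiperfectness is equivalent to the statement that the injective hull of every simple right (resp. left) $C$-comodule is finite-dimensional, equivalently that $\Mod^C_{\fd}$ has enough projectives, equivalently (by the standard theory in \cite{MR1786197}) that $C$ is a direct sum of finite-dimensional subcoalgebras in a suitable sense. I would argue that for a finite-dimensional comodule $M$, the module structure on $M$ factors through a finite-dimensional quotient algebra $C^*/I$ of $C^*$ (the annihilator of $M$ has finite codimension), so $C \otimes_{C^*} M$ only sees a finite-dimensional "block" of $C$; semiperfectness guarantees that block is finite-dimensional, whence $\Nak_C(M)$ is finite-dimensional. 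Here one must be a little careful: $C \otimes_{C^*} M$ is a quotient of $C \otimes_{\bfk} M$, and I would identify it with $(C \cdot e) \otimes M$ for an appropriate idempotent-like decomposition coming from the coradical filtration, using that $C$ semiperfect means each indecomposable injective right comodule — equivalently each $C \square_C$-summand attached to a simple — is finite-dimensional.

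For part (3), the forward direction: if $C$ is left QcF then by Iovanov's characterization \cite[Lemma 1.7]{MR3150709} $C$ is also right QcF and, crucially, the classes of projective and injective objects in $\Mod^C$ coincide with those finitely generated as $C^*$-modules; I would use this to show $\Nak_C$ sends injectives to projectives and, via its right adjoint from part (1), that unit and counit are isomorphisms — essentially $\Nak_C$ becomes an equivalence because $C$ is a self-injective $C^*$-module "up to a power." Concretely, QcF gives $C \hookrightarrow (C^*)^{\oplus\kappa}$ as $C^*$-modules with $C$ projective, and one deduces $C \otimes_{C^*} (-)$ is exact and essentially surjective. Conversely, if $\Nak_C$ is an equivalence, then it is exact (being both a left and right adjoint), so $C$ is flat as a right $C^*$-module in the relevant sense, and applying $\Nak_C$ to the simples and using that an equivalence preserves injective hulls one recovers the semiperfectness and then the QcF inclusion. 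I anticipate that this part is where most of the work lies, and the main obstacle is extracting the QcF inclusion $C \hookrightarrow (C^*)^{\oplus\kappa}$ from the mere fact that $\Nak_C$ is an equivalence; the cleanest route is probably to first show $\Nak_C$ equivalence $\Rightarrow$ $C$ semiperfect (apply part (2) in reverse, using that an equivalence must preserve the finite-length objects and their injective hulls), and then invoke the known equivalent conditions for QcF in \cite{MR1786197, MR3150709}.

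Finally, part (4): a co-Frobenius coalgebra is QcF, so by part (3) $\Nak_C$ is an equivalence, and one computes that for a finite-dimensional right comodule $M$ there is a natural isomorphism $\Nak_C(M) \cong M$ twisted by the "modular" data, which for co-Frobenius $C$ is trivial enough that $\dim \Nak_C(S) = \dim S$ for every simple $S$; conversely, if $\Nak_C$ is an equivalence preserving dimensions of simples, then it preserves the dimension of every finite-length comodule, and combined with $\Nak_C$ being an equivalence — hence sending the finite-dimensional injective hull of a simple (available by semiperfectness) to a finite-dimensional projective of the same dimension — one deduces $C$ embeds into $C^*$ as a $C^*$-module, i.e. $C$ is co-Frobenius. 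The crux here is the identity $\dim \Nak_C(S) = \dim S$ $\Leftrightarrow$ co-Frobenius, which should follow from the description of $\Nak_C$ on simples via the Nakayama permutation of the (finite-dimensional) blocks together with the characterization that co-Frobenius means this permutation acts by dimension-preserving bijections; I would cite the relevant computation from \cite{2021arXiv211008739S} rather than redo it.
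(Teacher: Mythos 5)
First, note that the paper itself gives no proof of this lemma: it is stated as a collection of results imported from \cite{2021arXiv211008739S} (``For reader's convenience, we include some results on the Nakayama functor given in \cite{2021arXiv211008739S} as the following lemma''), so there is no in-paper argument to compare yours against; your proposal is an attempt to reprove the main theorems of that earlier paper from scratch, and as such it has real gaps. Part (1) is fine: the right adjoint is indeed $N \mapsto ({}_{C^*}\Hom(C,N))^{\rat}$ by tensor--Hom adjunction together with the observation that module maps out of a rational module factor through the rational part. Part (2) as written is only a heuristic: reducing to $\Nak_C(M) \cong (C/CI)\otimes_{C^*/I} M$ for $I$ the finite-codimensional annihilator of $M$ still leaves you to prove that the relevant quotient of $C$ is finite-dimensional, which is exactly where semiperfectness must enter and which you do not actually do; the efficient route is via Lemma \ref{lem:Nakayama-dual}, since $\Nak_C(M)^* \cong \Hom^C(M, C^{*\rat}_{\ell})$ reduces (2) to the quasi-finiteness of $C^{*\rat}_{\ell}$ for semiperfect $C$ (cf.\ Lemma \ref{lem:Nakayama-and-rat-dual}(c)).

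The serious problems are in part (3). In the forward direction you assert that if $C$ is left QcF then, by Iovanov, $C$ is also right QcF; this is false, and the paper itself points to a right-QcF-not-left-QcF coalgebra in Remark \ref{rem:Nakayama-QcF-assumption} (citing \cite[Example 3.3.7]{MR1786197}). Iovanov's \cite[Lemma 1.7]{MR3150709} characterizes two-sided QcF coalgebras; it does not establish left--right symmetry. Since the lemma's ``QcF'' is two-sided this particular error is repairable, but the remainder of the forward argument (``exact and essentially surjective'', ``self-injective up to a power'') never produces an inverse functor nor verifies that the unit and counit of the adjunction from (1) are isomorphisms. In the converse direction, the step ``apply part (2) in reverse'' is not a valid inference---(2) is a one-way implication---and nothing in the proposal explains why $\Nak_C$ being an autoequivalence forces injective hulls of simple comodules to be finite-dimensional: a self-equivalence of $\Mod^C$ preserves injective hulls but gives no a priori bound on their length. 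That implication is precisely the substantive content of the corresponding theorem in \cite{2021arXiv211008739S}, and the proposal does not supply it. Part (4) has the right shape (co-Frobenius $=$ QcF plus a dimension condition on the induced permutation of simples), but you defer the crux to a citation of \cite{2021arXiv211008739S}---which is what the paper already does for the entire lemma.
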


We add some supplementary results for the purpose of this paper.
Let $C$ be a coalgebra. We define $C^{*\rat}_{\ell} \in \Mod^C$ and $C^{*\rat}_r \in {}^C\Mod$ to be the rational part of $C^*$ as a left and a right $C^*$-module, respectively.
We note:

\begin{lemma}
  \label{lem:Nakayama-dual}
  For $M \in \Mod^C$, there are natural isomorphisms
  \begin{equation*}
    \Nak_C(M)^*
    \cong {}_{C^*}\Hom(M, C^{*})
    \cong \Hom^C(M, C^{*\rat}_{\ell})
  \end{equation*}
  of vector spaces.
\end{lemma}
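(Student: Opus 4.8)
The plan is to exhibit each isomorphism separately and check naturality, working entirely with the tensor--hom adjunction and the definition of the rational part.

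\emph{First isomorphism.} Recall $\Nak_C(M) = C \otimes_{C^*} M$ with $C$ regarded as a right $C^*$-module via the convolution action $(c \leftharpoonup f) := \langle f, c_{(1)}\rangle c_{(2)}$ (so that the displayed coaction on $\Nak_C(M)$ is well defined) and $M$ as a left $C^*$-module via $\rightharpoonup$. For any $C^*$-bimodule structure of this shape, the standard duality $(C \otimes_{C^*} M)^* \cong {}_{C^*}\Hom(M, C^*)$ holds, where $C^*$ on the right-hand side carries the left $C^*$-module structure dual to the right $C^*$-module structure on $C$; one checks this is exactly the convolution left-module structure on $C^*$. Concretely I would send $\varphi \in \Nak_C(M)^*$ to the map $m \mapsto \big(c \mapsto \varphi(c \otimes_{C^*} m)\big)$, with inverse sending $\psi$ to $c\otimes_{C^*} m \mapsto \langle \psi(m), c\rangle$; the tensor-relation $cf \otimes_{C^*} m = c \otimes_{C^*} (f \rightharpoonup m)$ makes both assignments well defined, and they are visibly mutually inverse and $\bfk$-linear. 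Naturality in $M$ is immediate from the formulas.

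\emph{Second isomorphism.} A left $C^*$-module homomorphism $M \to C^*$ out of a right $C$-comodule $M$ has image contained in the rational part $C^{*\rat}_\ell$, because a homomorphic image of a rational module is rational and $C^{*\rat}_\ell$ is by definition the maximal rational left $C^*$-submodule of $C^*$. Hence restriction of codomain gives a bijection ${}_{C^*}\Hom(M, C^*) \cong {}_{C^*}\Hom(M, C^{*\rat}_\ell)$. Finally, since $M$ and $C^{*\rat}_\ell$ are both rational, a left $C^*$-module map between them is the same thing as a right $C$-comodule map, so ${}_{C^*}\Hom(M, C^{*\rat}_\ell) \cong \Hom^C(M, C^{*\rat}_\ell)$; this is the standard equivalence between rational left $C^*$-modules and right $C$-comodules recalled in Subsection~\ref{subsec:coalgebras}. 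Composing the two identifications and chasing naturality in $M$ (each step is functorial) completes the argument.

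\emph{Main obstacle.} The only genuinely delicate point is pinning down the left $C^*$-module structure on the target $C^*$ coming through the first isomorphism and verifying it coincides with the convolution structure whose rational part is $C^{*\rat}_\ell$ — i.e.\ matching conventions so that the right $C^*$-action on $C$ used to form $C \otimes_{C^*} M$ dualizes correctly. Once the $\rightharpoonup$/$\leftharpoonup$ bookkeeping is fixed (using $\langle \Delta c, f\otimes g\rangle$ and the coassociativity of $\Delta$), everything else is a routine adjunction computation.
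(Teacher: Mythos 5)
Your proposal is correct and follows essentially the same route as the paper: the first isomorphism is the tensor--Hom adjunction $(C \otimes_{C^*} M)^* \cong {}_{C^*}\Hom(M, C^*)$, and the second follows from the fact that the image of a rational $C^*$-module is rational, so that any $C^*$-linear map $M \to C^*$ factors through $C^{*\rat}_{\ell}$ and is then a comodule map. The extra bookkeeping you carry out (identifying the dual of the right $C^*$-action on $C$ with the convolution left module structure on $C^*$) is exactly the implicit content of the paper's one-line argument.
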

\begin{proof}
  The first isomorphism is just the tensor-Hom adjunction, and the second one follows from the fact that an image of a rational $C^*$-module is rational.
\end{proof}

A right $C$-comodule $Q$ is said to be {\em quasi-finite} \cite{MR472967} if $\Hom^C(X, Q)$ is finite-dimensional for all $X \in \Mod^C_{\fd}$.
By Lemma~\ref{lem:Nakayama-dual} and the fact that the contravariant endofunctor $X \mapsto X^*$ on $\Vect$ preserves and reflects exact sequences, one immediately proves:

\begin{lemma}
  \label{lem:Nakayama-and-rat-dual}
  Let $C$ be a coalgebra.
  \begin{itemize}
  \item [(a)] $\Nak_C$ is the zero functor if and only if $C^{*\rat}_{\ell} = 0$.
  \item [(b)] $\Nak_C$ is exact if and only if $C^{*\rat}_{\ell}$ is an injective right $C$-comodule.
  \item [(c)] $\Nak_C$ preserves $\Mod^C_{\fd}$ if and only if $C^{*\rat}_{\ell}$ is quasi-finite.
  \end{itemize}
\end{lemma}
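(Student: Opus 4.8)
The plan is to deduce all three equivalences from the natural isomorphism $\Nak_C(M)^* \cong \Hom^C(M, C^{*\rat}_{\ell})$ of Lemma~\ref{lem:Nakayama-dual}, together with the elementary facts that the contravariant functor $X \mapsto X^*$ on $\Vect$ reflects exact sequences, satisfies $X^* = 0 \Rightarrow X = 0$, and satisfies $\dim X = \dim X^*$ whenever either side is finite. Since the isomorphism of Lemma~\ref{lem:Nakayama-dual} is natural in $M$, it identifies the composite of $\Nak_C$ with the dualization functor $\Mod^C \to \Vect^{\op}$ with the functor $\Hom^C(-, C^{*\rat}_{\ell})$ on $\Mod^C$, so each property of $\Nak_C$ to be checked is translated into a property of the latter.

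For part~(a), I would note that $\Nak_C$ is the zero functor iff $\Nak_C(M)^* = 0$ for all $M \in \Mod^C$, iff $\Hom^C(M, C^{*\rat}_{\ell}) = 0$ for all $M$. The reverse implication is immediate; for the forward one, take $M = C^{*\rat}_{\ell}$, so that $\id_{C^{*\rat}_{\ell}} = 0$ and hence $C^{*\rat}_{\ell} = 0$. For part~(c), $\Nak_C$ preserves $\Mod^C_{\fd}$ iff $\Nak_C(M)$ is finite-dimensional for every $M \in \Mod^C_{\fd}$, iff $\Hom^C(M, C^{*\rat}_{\ell}) \cong \Nak_C(M)^*$ is finite-dimensional for every such $M$; the last condition is exactly the definition of $C^{*\rat}_{\ell}$ being quasi-finite.

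For part~(b), I would first recall that $\Nak_C$ is right exact (it has a right adjoint by Lemma~\ref{lem:Nakayama-basics}(1), or directly because it is given by a tensor product), so it is exact iff it preserves monomorphisms, equivalently iff it carries every short exact sequence $0 \to M' \to M \to M'' \to 0$ in $\Mod^C$ to a short exact sequence. Dualizing and using the natural isomorphism, this holds iff $0 \to \Hom^C(M'', C^{*\rat}_{\ell}) \to \Hom^C(M, C^{*\rat}_{\ell}) \to \Hom^C(M', C^{*\rat}_{\ell}) \to 0$ is exact for every such short exact sequence. As $\Hom^C(-, C^{*\rat}_{\ell})$ is automatically left exact, the only content is the surjectivity of $\Hom^C(M, C^{*\rat}_{\ell}) \to \Hom^C(M', C^{*\rat}_{\ell})$, which holds for all short exact sequences precisely when $C^{*\rat}_{\ell}$ is an injective object of $\Mod^C$.

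I do not anticipate a genuine obstacle: the argument is purely formal once Lemma~\ref{lem:Nakayama-dual} is available. The only step needing a little care is the reduction in~(b), where one must invoke that $\Nak_C$ is already known to be right exact so that exactness reduces to preservation of monomorphisms, and must observe that the left-exactness of the Hom functor makes the translated condition exactly ``$C^{*\rat}_{\ell}$ is injective'' rather than anything stronger.
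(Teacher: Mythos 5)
Your argument is correct and is exactly the one the paper intends: the paper states Lemma~\ref{lem:Nakayama-and-rat-dual} as an immediate consequence of Lemma~\ref{lem:Nakayama-dual} together with the fact that $X \mapsto X^*$ preserves and reflects exact sequences, and your write-up simply fills in those routine details (taking $M = C^{*\rat}_{\ell}$ in (a), reducing to preservation of monomorphisms via right exactness in (b), and matching the definition of quasi-finiteness in (c)). No issues.
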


We recall that a ring $R$ is said to be left self-injective if $R$ is injective as a left $R$-module.
By Lemma~\ref{lem:Nakayama-dual}, we also have:

\begin{lemma}
  \label{lem:Nakayama-dual-self-injective}
  $\Nak_C$ is exact if the ring $C^*$ is left self-injective.
\end{lemma}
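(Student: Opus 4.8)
The plan is to reduce the assertion to the exactness of the contravariant functor ${}_{C^*}\Hom(-, C^{*})$ via the natural isomorphism $\Nak_C(M)^* \cong {}_{C^*}\Hom(M, C^{*})$ established in Lemma~\ref{lem:Nakayama-dual}. Concretely, I would take a short exact sequence $0 \to M' \to M \to M'' \to 0$ in $\Mod^C$. Since the forgetful functor $\Mod^C \to {}_{C^*}\Mod$ is exact (kernels and cokernels of comodule maps are computed as those of the underlying linear maps) and faithful, this is also a short exact sequence of left $C^*$-modules. Applying ${}_{C^*}\Hom(-, C^{*})$ and using the hypothesis that $C^*$ is injective as a left module over itself, I obtain a short exact sequence
\begin{equation*}
  0 \to {}_{C^*}\Hom(M'', C^{*}) \to {}_{C^*}\Hom(M, C^{*}) \to {}_{C^*}\Hom(M', C^{*}) \to 0,
\end{equation*}
which by Lemma~\ref{lem:Nakayama-dual} is identified with $0 \to \Nak_C(M'')^* \to \Nak_C(M)^* \to \Nak_C(M')^* \to 0$.

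Because the contravariant endofunctor $(-)^*$ on $\Vect$ preserves and reflects exact sequences, it follows that $0 \to \Nak_C(M') \to \Nak_C(M) \to \Nak_C(M'') \to 0$ is exact; together with the right exactness of $\Nak_C$ recorded in Lemma~\ref{lem:Nakayama-basics}(1), this shows $\Nak_C$ is exact. Alternatively, one can argue through Lemma~\ref{lem:Nakayama-and-rat-dual}(b): the inclusion $\Mod^C \hookrightarrow {}_{C^*}\Mod$ is exact, so its right adjoint, the rational part functor $(-)^{\rat}$, preserves injective objects; hence if $C^*$ is left self-injective, then $C^{*\rat}_{\ell} = (C^*)^{\rat}$ is an injective right $C$-comodule, and Lemma~\ref{lem:Nakayama-and-rat-dual}(b) gives the conclusion. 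I would present whichever of these two routes is shorter in context.

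There is no genuine obstacle here: the statement is a formal consequence of Lemma~\ref{lem:Nakayama-dual} (equivalently of Lemma~\ref{lem:Nakayama-and-rat-dual}), and the only point requiring a word of care is the exactness and faithfulness of the forgetful functor $\Mod^C \to {}_{C^*}\Mod$, which allows one to transport the self-injectivity hypothesis on $C^*$ to a statement about $C$-comodules.
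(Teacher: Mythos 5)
Your proposal is correct and follows exactly the route the paper intends: the paper derives this lemma directly from Lemma~\ref{lem:Nakayama-dual} (dualizing $\Nak_C$ into ${}_{C^*}\Hom(-,C^*)$ and using that $(-)^*$ on $\Vect$ preserves and reflects exactness), which is your first argument spelled out in detail. Your alternative via Lemma~\ref{lem:Nakayama-and-rat-dual}(b) and the injectivity-preserving right adjoint $(-)^{\rat}$ is also valid but is essentially the same content repackaged.
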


The coverse of this lemma does not hold in general.
Indeed, let $C = \bfk[t]$ be the polynomial algebra over $\bfk$, and equip it with a Hopf algebra structure determined by $\Delta(t) = t \otimes 1 + 1 \otimes t$.
Then $C^*$ is isomorphic to the algebra $\bfk[[t]]$ of formal power series.
Since $C^*$ has no non-zero finite-dimensional submodules, we have $C^{*\rat}_{\ell} = 0$ by \cite[Corollary 2.2.16]{MR1786197}.
Thus $\Nak_C$ is the zero functor and, in particular, it is exact. However, $C^*$ is not left self-injective by the classical fact that a module over a principal ideal domain is injective if and only if it is a divisible module.

We also give the following new characterization of QcF coalgebras:

\begin{lemma}
  \label{lem:Nakayama-QcF}
  Let $C$ be a coalgebra such that
  \begin{equation}
    \label{eq:Nakayama-QcF-assumption}
    \text{the functor $\Nak_{C^{\cop}} : {}^C\Mod \to {}^C\Mod$ is exact},
  \end{equation}
  where ${}^C\Mod$ is identified with $\Mod^{C^{\cop}}$.
  Then the following assertions are equivalent:
  \begin{enumerate}
  \item $C$ is QcF.
  \item Every simple right $C$-comodule is an epimorphic image of an injective right $C$-comodule.
  \item Every simple right $C$-comodule is an epimorphic image of $C$.
  \end{enumerate}
\end{lemma}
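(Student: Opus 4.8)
The plan is to prove the chain of implications (1) $\Rightarrow$ (3) $\Rightarrow$ (2) $\Rightarrow$ (1), since (3) $\Rightarrow$ (2) is immediate (the coalgebra $C$ is an injective right $C$-comodule, being an injective cogenerator of $\Mod^C$). For (1) $\Rightarrow$ (3): if $C$ is QcF, then by Lemma~\ref{lem:Nakayama-basics}(3) the functor $\Nak_C$ is an equivalence; in particular it is exact and faithful. A simple right $C$-comodule $S$ is a quotient of $C$ precisely when $S$ lies in the ``top'' of $C$, i.e. when $\Hom^C(C, S) \ne 0$, equivalently $C^* \rightharpoonup$-covers $S$. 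Here I expect to use the standard fact that QcF coalgebras are semiperfect (stated just after the list of definitions), so every simple left comodule has a projective cover; combined with the characterization that $C$ is QcF iff $C$ is a projective generator in $\Mod^C$ (a consequence of Lemma~\ref{lem:Nakayama-basics}(3) applied together with the description of $\Nak_C$ as $C \otimes_{C^*} -$), one deduces that $C$ surjects onto every simple. Alternatively, and perhaps more cleanly, one can invoke Iovanov's characterization \cite[Lemma 1.7]{MR3150709} cited in the text, which describes QcF coalgebras in module-theoretic terms that directly yield surjections from $C$ onto the simples.

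The substance of the lemma is the implication (2) $\Rightarrow$ (1), and this is where the hypothesis \eqref{eq:Nakayama-QcF-assumption} — exactness of $\Nak_{C^{\cop}}$ — must be used. First I would observe that, by Lemma~\ref{lem:Nakayama-basics}(3), it suffices to show that $\Nak_C$ is an equivalence, and since $\Nak_C$ already has a right adjoint and preserves colimits by Lemma~\ref{lem:Nakayama-basics}(1), it is enough to show $\Nak_C$ is faithful and exact, or to identify the unit/counit of the adjunction as isomorphisms. The condition (2) says every simple right $C$-comodule receives an epimorphism from an injective; equivalently, every injective hull of a simple — hence (taking sums) every injective right $C$-comodule, and in particular the injective cogenerator — has a simple quotient, so that no injective right $C$-comodule is ``superdecomposable'' in a way that kills the top. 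The natural strategy is to translate (2) into a statement about $C^{*\,\rat}_{\ell}$ and $C^{*\,\rat}_r$ via Lemma~\ref{lem:Nakayama-dual} and Lemma~\ref{lem:Nakayama-and-rat-dual}: exactness of $\Nak_{C^{\cop}}$ is, by Lemma~\ref{lem:Nakayama-and-rat-dual}(b) (applied to $C^{\cop}$), the statement that $C^{*\,\rat}_r$ is an injective left $C$-comodule; and condition (2) should force $C^{*\,\rat}_r$ to be ``large enough'' — specifically, that it is a (faithfully) injective cogenerator, or that its socle meets every simple. Then one runs the dual of the argument for (1) $\Rightarrow$ (3): the injectivity of $C^{*\,\rat}_r$ plus the cogenerator property gives, after dualizing back, an embedding of $C$ into a (possibly infinite) power of $C^*$ as left $C^*$-modules, which is exactly the definition of left QcF; the symmetric statement on the other side gives right QcF.

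Concretely, the key step I anticipate is: \emph{under \eqref{eq:Nakayama-QcF-assumption}, condition (2) implies that $C^{*\,\rat}_r$ is an injective cogenerator of ${}^C\Mod$.} Injectivity is exactly hypothesis \eqref{eq:Nakayama-QcF-assumption} rephrased via Lemma~\ref{lem:Nakayama-and-rat-dual}(b). For the cogenerator property, take a simple right $C$-comodule $S$; by (2) there is a surjection $E \twoheadrightarrow S$ with $E$ injective. Applying $(-)^*$ and using Lemma~\ref{lem:Nakayama-dual} one gets $0 \ne S^* \hookrightarrow E^* \cong$ something built from $C^{*\,\rat}_{\ell}$; chasing this through the natural isomorphism $\Nak_C(M)^* \cong \Hom^C(M, C^{*\,\rat}_{\ell})$ shows $\Hom^C(S, C^{*\,\rat}_{\ell}) \ne 0$, i.e. every simple right comodule embeds in $C^{*\,\rat}_{\ell}$, which (since $C^{*\,\rat}_{\ell}$ is then seen to contain the socle of the injective cogenerator) makes it an injective cogenerator of $\Mod^C$. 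The same argument with $C^{\cop}$ handles the other side. Once both $C^{*\,\rat}_{\ell}$ and $C^{*\,\rat}_r$ are injective cogenerators, one applies Lemma~\ref{lem:Nakayama-basics}(3) (or Lemma~\ref{lem:Nakayama-dual} directly) to conclude $\Nak_C$ is an equivalence, hence $C$ is QcF. The main obstacle is the bookkeeping in this step: ensuring that the ``epimorphic image'' condition in (2), which a priori only controls the top of \emph{each} injective individually, globalizes to the cogenerator statement — this is precisely where one needs that injectives in $\Mod^C$ are direct sums of injective hulls of simples (valid because $C$ is a coalgebra, so $\Mod^C$ is locally finite), together with the exactness hypothesis to transport injectivity across the dualization.
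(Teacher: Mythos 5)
The easy directions of your cycle are fine: (3) $\Rightarrow$ (2) is immediate since $C$ is injective in $\Mod^C$, and (1) $\Rightarrow$ (3) can indeed be extracted from the fact that a QcF coalgebra is a generator of $\Mod^C$ (\cite[Corollary 3.3.10]{MR1786197}); the paper instead runs (1) $\Rightarrow$ (2) $\Rightarrow$ (3) via projective covers and the decomposition of injectives into direct summands of $C$, but that is a cosmetic difference. The hard implication (2) $\Rightarrow$ (1), however, has two genuine gaps. First, the sides are mixed up: dualizing an epimorphism $C \to S$ in $\Mod^C$ produces a monomorphism of \emph{right} $C^*$-modules $S^* \hookrightarrow C^*$, i.e.\ an embedding of the simple \emph{left} $C$-comodule $S^*$ into $C^{*\rat}_r$. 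It says nothing about $\Hom^C(S, C^{*\rat}_{\ell})$; by Lemma \ref{lem:Nakayama-dual} that space is $\Nak_C(S)^*$, and there is no reason for it to be non-zero here, since exactness (or non-vanishing) of $\Nak_C$ — equivalently, injectivity of $C^{*\rat}_{\ell}$ — is \emph{not} among the hypotheses; only $\Nak_{C^{\cop}}$ is assumed exact. So the most (2) gives you is that $C^{*\rat}_r$ is an injective left $C$-comodule whose socle meets every simple; the claim that ``both $C^{*\rat}_{\ell}$ and $C^{*\rat}_r$ are injective cogenerators'' is unsupported and would amount to assuming a two-sided hypothesis you do not have.

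Second, and more seriously, the final appeal to ``the symmetric statement on the other side'' is vacuous: conditions (2)/(3) and assumption \eqref{eq:Nakayama-QcF-assumption} are all one-sided, so there is no symmetry to invoke, and an embedding $C \hookrightarrow (C^*)^{\oplus\kappa}$ obtained from the cogenerator property only yields QcF on one side. Remark \ref{rem:Nakayama-QcF-assumption} makes the danger concrete: a right QcF coalgebra that is not left QcF satisfies (2) and (3) \emph{and} has $\Nak_C$ exact, so no argument of the shape you propose can close the loop. The paper's proof closes it with Iovanov's theorem \cite[Lemma 1.7]{MR3150709}: one decomposes $C \cong \bigoplus_i E_i^{\oplus a_i}$ and $C^{*\rat}_r \cong \bigoplus_i E_i^{\oplus b_i}$ as left $C$-comodules (the second decomposition uses the injectivity of $C^{*\rat}_r$, i.e.\ exactly hypothesis \eqref{eq:Nakayama-QcF-assumption}), shows $a_i, b_i > 0$ for all $i$ (with $b_i > 0$ coming from condition (3) by the dualization above), and then takes an infinite cardinal $\kappa$ dominating all multiplicities to get an \emph{isomorphism} $C^{\oplus\kappa} \cong (C^{*\rat}_r)^{\oplus\kappa}$. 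It is this two-sided isomorphism — not a one-way embedding — that Iovanov's lemma converts into (full, two-sided) QcF. Your proposal never constructs this isomorphism and only mentions Iovanov's result for the easy direction where it is not needed.
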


We note that \eqref{eq:Nakayama-QcF-assumption} can be rephrased as follows:
For $D = C^{\cop}$, we have $D^{*\rat}_{\ell} = C^{*\rat}_{r}$ as subspaces of $D^*$ ($ = C^*$).
Thus, by Lemma~\ref{lem:Nakayama-and-rat-dual}, the assumption \eqref{eq:Nakayama-QcF-assumption} is equivalent to that $C^{*\rat}_{r}$ is injective as a left $C$-comodule.
The necessity of the assumption \eqref{eq:Nakayama-QcF-assumption} will be discussed in Remark \ref{rem:Nakayama-QcF-assumption}.

\begin{proof}[Proof of Lemma \ref{lem:Nakayama-QcF}]
  \underline{(1) $\Rightarrow$ (2)}.
  Suppose that (1) holds.
  Then every injective right $C$-comodule is projective, and vice varsa \cite[Theorem 3.3.4]{MR1786197}.
  Since a QcF coalgebra is semiperfect, every simple right $C$-comodule has a projective cover, which is in fact injective.
  Hence (2) holds.

  \medskip
  \underline{(2) $\Rightarrow$ (3)}.
  Suppose that (2) holds.
  Let $S$ be a simple right $C$-comodule, and let $\pi: E \to S$ be an epimorphism in $\Mod^C$ with $E$ injective.
  We decompose $E$ into the direct sum of indecomposable injective right $C$-comodules, as $E = \bigoplus_{i \in I} E_{i}$, and let $\pi_i : E_i \to S$ be the restriction of $\pi$ to $E_i$.
  We note that each $E_i$ is an injective hull of a simple right $C$-comodule and thus it is a direct summand of $C$ \cite[Theorem 2.4.16]{MR1786197}.
  Now we pick an element $i_0 \in I$ such that $\pi_{i_0} \ne 0$.
  Then $\pi_{i_0}$ is epic since $S$ is simple. Thus $S$ is an epimorphic image of $C$.

  \medskip
  \underline{(3) $\Rightarrow$ (1)}.
  Suppose that (3) holds. Let $\{ S_i \}_{i \in I}$ be a complete set of representatives of the isomorphism classes of simple left $C$-comodules, and let $E_i$ be an injective hull of $S_i$ in ${}^C\Mod$.
  We decompose injective left $C$-comodules $C$ and $C^{*\rat}_r$ into direct sums of $E_i$'s, as follows:
  \begin{equation*}
    C \cong \bigoplus_{i \in I} E_i^{\oplus a_i},
    \quad
    C^{*\rat}_r \cong \bigoplus_{i \in I} E_i^{\oplus b_i}.
  \end{equation*}

  We have $a_i > 0$ for all $i \in I$.
  For each $i \in I$, there is an epimorphism $C \to S_i^*$ in $\Mod^C$ by the assumption.
  Thus there is a monomorphism of right $C^*$-module from $S_i$ ($\cong S_i^{**}$) into $C^*$.
  Since the right $C^*$-module $S_i$ is rational, $C^{*\rat}_r$ has a subcomodule isomorphic to $S_i$.
  This implies $b_i > 0$.

  Let $\kappa$ be an infinite cardinal greater than all $a_i$'s and $b_i$'s.
  By the above discussion, we have $C^{\oplus \kappa} \cong \bigoplus_{i \in I} E_i^{\oplus \kappa} \cong (C^{*\rat}_r)^{\oplus \kappa}$ as left $C$-comodules. Now (1) follows from Iovanov's result \cite[Lemma 1.7]{MR3150709} stating that a coalgebra $Q$ is QcF if and only if there is a cardinal $\kappa > 0$ such that $(Q^{*\rat}_{r})^{\oplus \kappa} \cong Q^{\oplus \kappa}$ as left $Q$-comodules.
\end{proof}

\begin{remark}
  \label{rem:Nakayama-QcF-assumption}
  The assumption \eqref{eq:Nakayama-QcF-assumption} of Lemma \ref{lem:Nakayama-QcF} is not used to prove (1) $\Rightarrow$ (2) and (2) $\Rightarrow$ (3).
  In addition, it is obvious that (3) $\Rightarrow$ (2) holds without assuming  \eqref{eq:Nakayama-QcF-assumption}.
  To show (3) $\Rightarrow$ (1), the assumption \eqref{eq:Nakayama-QcF-assumption} is necessarily.
  Indeed, let $C$ be a right QcF coalgebra which is not left QcF (a concrete example of such a coalgebra is found in \cite[Example 3.3.7]{MR1786197}).
  Then $C$ is not QcF, however, the condition (3) holds by the fact that $C$ is a generator in $\Mod^C$ \cite[Corollary 3.3.10]{MR1786197}.

  For a right QcF coalgebra $C$ which is not left QcF, we also have that $\Nak_C$ is exact by Lemma \ref{lem:Nakayama-dual-self-injective} and the fact that the dual algebra of a right semiperfect coalgebra is left self-injective \cite[Corollary 3.3.9]{MR1786197}.
  This means that \eqref{eq:Nakayama-QcF-assumption} cannot be replaced with the assumption that $\Nak_C$ is exact.
\end{remark}

\subsection{Nakayama functor for locally finite abelian categories}

Let $\mathcal{A}$ be a linear category, and let $W \in \Vect$ and $M \in \mathcal{A}$ be objects. The {\em copower} of $M$ by $W$, denoted by $W \copow M$, is an object of $\mathcal{A}$ such that there is a natural isomorphism
\begin{equation*}
  \Hom_{\mathcal{A}}(W \copow M, N) \cong \Hom_{\bfk}(W, \Hom_{\mathcal{A}}(M, N))
\end{equation*}
for $N \in \mathcal{A}$. Suppose that the copower $W \copow M$ exists for all objects $W \in \Vect$ and $M \in \mathcal{A}$.
Then, for objects $M, N \in \mathcal{A}$, the {\em coHom} space $\coHom_{\mathcal{A}}(M, N)$ is defined as the vector space such that there is a natural isomorphism
\begin{equation*}
  \Hom_{\bfk}(\coHom_{\mathcal{A}}(M, N), W)
  \cong \Hom_{\mathcal{A}}(N, W \copow M)
\end{equation*}
for $W \in \Vect$.

These definitions are inspired by Takeuchi \cite{MR472967}, where the coHom functor for the category of comodules was introduced and used to establish Morita theory for coalgebras.
Let $C$ be a coalgebra.
The copower of $M \in \Mod^C$ by $W \in \Vect$ is just the tensor product $W \otimes_{\bfk} M$ of them with the coaction $\id_W \otimes \delta_M$.
We fix $M \in \Mod^C$.
Then, according to \cite{MR472967}, the coHom space from $M$ to $N$ exists for all $N \in \Mod^C$ if and only if $M$ is quasi-finite. Furthermore, if this is the case, the coHom space from $M$ to $N$ is given by
\begin{equation*}
  \coHom_{\Mod^C}(M, N) = \varinjlim_{\lambda \in \Lambda} \Hom^C(N_{\lambda}, M)^*
\end{equation*}
for $N \in \Mod^{C}$, where $\{ N_{\lambda} \}_{\lambda \in \Lambda}$ is the directed set of all finite-dimensional subcomodules of $N$.

A {\em locally finite abelian category} \cite{MR3242743} is an essentially small linear category $\mathcal{A}$ such that every object of $\mathcal{A}$ is of finite length and $\Hom_{\mathcal{A}}(X, Y)$ is of finite-dimensional for all objects $X, Y \in \mathcal{A}$. It is known that a locally finite abelian category is precisely a linear category that is equivalent to $\Mod^C_{\fd}$ for some coalgebra $C$ (see \cite{MR472967} and \cite[Theorem 1.9.15]{MR3242743}).

Given a category $\mathcal{A}$, we denote its ind-completion \cite{MR2182076} by $\Ind(\mathcal{A})$.
There is a canonical functor $\iota_{\mathcal{A}}: \mathcal{A} \to \Ind(\mathcal{A})$ by which we regard $\mathcal{A}$ as a full subcategory of $\Ind(\mathcal{A})$.
If $\mathcal{A} = \Mod^C_{\fd}$ for some coalgebra $C$, then we may identify $\Ind(\mathcal{A})$ and $\iota_{\mathcal{A}}$ with $\Mod^C$ and the inclusion functor, respectively.
Furthermore, an object of $\Ind(\mathcal{A})$ belongs to $\mathcal{A}$ precisely if it is of finite length.

In \cite[Theorem 3.6]{2021arXiv211008739S}, we have proved that the Nakayama functor is given by
\begin{equation*}
  \Nak_C(M) = \int^{X \in \Mod^C_{\fd}} \coHom_{\Mod^C}(X, M) \otimes_{\bfk} X
  \quad (M \in \Mod^C),
\end{equation*}
where the integral means a {\em coend} \cite{MR1712872}.
In view of this universal property of the Nakayama functor and the fact that a locally finite abelian category is equivalent to $\Mod^C_{\fd}$ for some coalgebra $C$, we define:

\begin{definition}[\cite{2021arXiv211008739S}]
  For a locally finite abelian category $\mathcal{A}$, we define the {\em Nakayama functor} to be the endofunctor $\Nak_{\Ind(\mathcal{A})}$ on $\Ind(\mathcal{A})$ given by
  \begin{equation*}
    \Nak_{\Ind(\mathcal{A})}(M) = \int^{X \in \mathcal{A}} \coHom_{\Ind(\mathcal{A})}(X, M) \copow X
    \quad (M \in \Ind(\mathcal{A})).
  \end{equation*}
\end{definition}

We note that the full subcategory $\mathcal{A} \subset \Ind(\mathcal{A})$ may not be closed under $\Nak_{\Ind(\mathcal{A})}$.
By Lemma \ref{lem:Nakayama-basics}, the functor $\Nak_{\Ind(\mathcal{A})}$ restricts to an endofunctor on $\mathcal{A}$ if there exists a semiperfect coalgebra $C$ such that $\mathcal{A} \approx \Mod^C_{\fd}$.

Given a functor $F$, we denote by $F^{\ladj}$ a left adjoint of $F$ (if it exists).
The following lemma was proved in \cite{2021arXiv211008739S} by using the universal property of the Nakayama functor in a similar way as the finite case established in \cite{MR4042867}.

\begin{lemma}[{\cite[Theorem 4.7]{2021arXiv211008739S}}]
  \label{lem:Nakayama-double-adj}
  Let $\mathcal{A}$ and $\mathcal{B}$ be locally finite abelian categories, and let $F: \Ind(\mathcal{A}) \to \Ind(\mathcal{B})$ be a linear functor such that $F$ preserves colimits and the double left adjoint $F^{\lladj} := (F^{\ladj})^{\ladj}$ exists. Then there is an isomorphism of functors $F \circ \Nak_{\Ind(\mathcal{A})} \cong \Nak_{\Ind(\mathcal{B})} \circ F^{\lladj}$.
\end{lemma}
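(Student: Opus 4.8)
The plan is to reduce the claimed isomorphism of functors on $\Ind(\mathcal{A})$ to a natural isomorphism of their restrictions to the subcategory $\mathcal{A}$, and then to verify the latter by a direct coend computation using the explicit form of $\Nak_{\Ind(\mathcal{A})}$ on finite-length objects. I would first note that both composites $F\circ\Nak_{\Ind(\mathcal{A})}$ and $\Nak_{\Ind(\mathcal{B})}\circ F^{\lladj}$ preserve colimits: the Nakayama functors do by Lemma~\ref{lem:Nakayama-basics}(1) (choosing coalgebras $C,D$ with $\mathcal{A}\approx\Mod^{C}_{\fd}$ and $\mathcal{B}\approx\Mod^{D}_{\fd}$), $F$ does by hypothesis, and $F^{\lladj}$ does as a left adjoint. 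Since $\iota_{\mathcal{A}}$ is dense, every $M\in\Ind(\mathcal{A})$ is the canonical colimit $M\cong\int^{X\in\mathcal{A}}\Hom_{\Ind(\mathcal{A})}(X,M)\copow X$, so any colimit-preserving functor out of $\Ind(\mathcal{A})$ is the left Kan extension along $\iota_{\mathcal{A}}$ of its restriction to $\mathcal{A}$; as left Kan extension is functorial, it suffices to produce a natural isomorphism $F(\Nak_{\Ind(\mathcal{A})}(X))\cong\Nak_{\Ind(\mathcal{B})}(F^{\lladj}(X))$ for $X\in\mathcal{A}$.

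Next I would record a stability property. As $F$ preserves colimits, in particular filtered ones, a standard adjunction argument shows that $F^{\ladj}$ sends compact objects to compact objects; and since $F^{\ladj}$ is itself a left adjoint, it too preserves filtered colimits, so $F^{\lladj}$ likewise preserves compactness. Because the compact objects of $\Ind(\mathcal{A})$ (resp.\ $\Ind(\mathcal{B})$) are exactly the finite-length ones, i.e.\ those of $\mathcal{A}$ (resp.\ $\mathcal{B}$), the functors $F^{\ladj}$ and $F^{\lladj}$ restrict to functors $\mathcal{B}\to\mathcal{A}$ and $\mathcal{A}\to\mathcal{B}$. In particular $F^{\lladj}(X)\in\mathcal{B}$ for $X\in\mathcal{A}$; moreover, for any finite-length object $Y$ the directed system of finite-length subobjects is trivial, whence $\coHom_{\Ind(\mathcal{A})}(Z,Y)\cong\Hom_{\Ind(\mathcal{A})}(Y,Z)^{*}$ and so $\Nak_{\Ind(\mathcal{A})}(Y)\cong\int^{Z\in\mathcal{A}}\Hom_{\Ind(\mathcal{A})}(Y,Z)^{*}\copow Z$, and likewise over $\mathcal{B}$.

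With these in hand I would compute. Since $F$ commutes with coends and with copowers by finite-dimensional spaces (the weights $\Hom_{\Ind(\mathcal{A})}(X,Z)^{*}$ are finite-dimensional),
\[
  F\bigl(\Nak_{\Ind(\mathcal{A})}(X)\bigr)\;\cong\;\int^{Z\in\mathcal{A}}\Hom_{\Ind(\mathcal{A})}(X,Z)^{*}\copow F(Z).
\]
Writing $F(Z)\cong\int^{W\in\mathcal{B}}\Hom_{\Ind(\mathcal{B})}(W,F(Z))\copow W$ by density of $\mathcal{B}$ in $\Ind(\mathcal{B})$, interchanging the two coends, and using $F^{\ladj}\dashv F$ together with $F^{\ladj}(W)\in\mathcal{A}$, the right-hand side becomes
\[
  \int^{W\in\mathcal{B}}\Bigl(\int^{Z\in\mathcal{A}}\Hom_{\Ind(\mathcal{A})}(X,Z)^{*}\otimes_{\bfk}\Hom_{\Ind(\mathcal{A})}(F^{\ladj}(W),Z)\Bigr)\copow W.
\]
On the other side, the finite coend formula applied to the finite-length object $F^{\lladj}(X)$ and the adjunction $F^{\lladj}\dashv F^{\ladj}$ give
\[
  \Nak_{\Ind(\mathcal{B})}\bigl(F^{\lladj}(X)\bigr)\;\cong\;\int^{W\in\mathcal{B}}\Hom_{\Ind(\mathcal{A})}\bigl(X,F^{\ladj}(W)\bigr)^{*}\copow W.
\]
Thus the whole statement reduces to the natural isomorphism, for $X,Y\in\mathcal{A}$,
\[
  \int^{Z\in\mathcal{A}}\Hom_{\Ind(\mathcal{A})}(X,Z)^{*}\otimes_{\bfk}\Hom_{\Ind(\mathcal{A})}(Y,Z)\;\cong\;\Hom_{\Ind(\mathcal{A})}(X,Y)^{*},
\]
which I would prove by testing against a finite-dimensional space $V$, rewriting $\Hom_{\Ind(\mathcal{A})}(X,Z)\otimes_{\bfk}V\cong\Hom_{\Ind(\mathcal{A})}(V^{*}\copow X,Z)$ with $V^{*}\copow X\in\mathcal{A}$, and applying the Yoneda lemma for ends.

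The step I expect to be the main obstacle is the coend bookkeeping just sketched: one must legitimately interchange the two coends indexed by the distinct categories $\mathcal{A}$ and $\mathcal{B}$ in the $\Ind$-setting, confirm that the functors in play preserve the relevant copowers, and, most importantly, track naturality in $X$ throughout — only a \emph{natural} isomorphism on $\mathcal{A}$ extends back to an isomorphism of functors on $\Ind(\mathcal{A})$. A secondary but essential point is the preservation of compactness by $F^{\ladj}$ and $F^{\lladj}$: this is exactly what makes the finite coend formula for the Nakayama functor available on $F^{\lladj}(X)$ and what keeps $F^{\ladj}(W)$ inside $\mathcal{A}$, and it fails without the hypothesis that $F$ preserves colimits.
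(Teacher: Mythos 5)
This lemma is quoted from \cite[Theorem 4.7]{2021arXiv211008739S}; the paper gives no proof of its own, describing the cited argument only as ``using the universal property of the Nakayama functor'' (i.e.\ its coend presentation) in the manner of the finite case of \cite{MR4042867}. Your proposal is correct and follows essentially that same route --- reduction to compact objects via colimit-preservation, then adjunction and co-Yoneda manipulations inside the coend formula $\Nak(Y)\cong\int^{Z}\Hom(Y,Z)^{*}\copow Z$ --- so there is nothing to correct.
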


For later use, we rephrase Lemma~\ref{lem:Nakayama-QcF} as follows:

\begin{lemma}
  \label{lem:Nakayama-QcF-2}
  Let $\mathcal{A}$ be a locally finite abelian category such that $\Nak_{\Ind(\mathcal{A}^{\op})}$ is exact, and let $Q$ be a coalgebra such that $\mathcal{A} \approx \Mod^{Q}_{\fd}$ as linear categories. Then the following are equivalent:
  \begin{enumerate}
  \item The coalgebra $Q$ is QcF.
  \item Every simple object of $\mathcal{A}$ is an epimorphic image of an injective object of $\Ind(\mathcal{A})$.
  \end{enumerate}
\end{lemma}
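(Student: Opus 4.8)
The plan is to reduce Lemma~\ref{lem:Nakayama-QcF-2} to Lemma~\ref{lem:Nakayama-QcF} by unwinding the dictionary between a locally finite abelian category and its defining coalgebra. Since $\mathcal{A} \approx \Mod^{Q}_{\fd}$ as linear categories, we obtain an equivalence $\Ind(\mathcal{A}) \approx \Mod^{Q}$ that identifies $\iota_{\mathcal{A}}$ with the inclusion $\Mod^Q_{\fd} \hookrightarrow \Mod^Q$, and under this equivalence the Nakayama functor $\Nak_{\Ind(\mathcal{A})}$ corresponds to $\Nak_Q$ by the coend formula recalled before the definition of the Nakayama functor for locally finite abelian categories. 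Likewise $\mathcal{A}^{\op} \approx (\Mod^{Q}_{\fd})^{\op} \approx {}^{Q}\Mod_{\fd} \approx \Mod^{Q^{\cop}}_{\fd}$, so $\Nak_{\Ind(\mathcal{A}^{\op})}$ is identified with $\Nak_{Q^{\cop}}$ on ${}^{Q}\Mod = \Mod^{Q^{\cop}}$. Hence the hypothesis ``$\Nak_{\Ind(\mathcal{A}^{\op})}$ is exact'' is precisely the hypothesis \eqref{eq:Nakayama-QcF-assumption} of Lemma~\ref{lem:Nakayama-QcF} with $C = Q$.

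With this translation in place, the two conditions to be shown equivalent match up as follows. Condition (1) of Lemma~\ref{lem:Nakayama-QcF-2} is verbatim condition (1) of Lemma~\ref{lem:Nakayama-QcF}. Condition (2) of Lemma~\ref{lem:Nakayama-QcF-2} says that every simple object of $\mathcal{A}$ is an epimorphic image of an injective object of $\Ind(\mathcal{A})$; transported to $\Mod^Q$, this says every simple right $Q$-comodule is an epimorphic image of an injective right $Q$-comodule, which is exactly condition (2) of Lemma~\ref{lem:Nakayama-QcF}. So the equivalence (1) $\Leftrightarrow$ (2) of Lemma~\ref{lem:Nakayama-QcF-2} follows immediately from the equivalence (1) $\Leftrightarrow$ (2) of Lemma~\ref{lem:Nakayama-QcF}. (Lemma~\ref{lem:Nakayama-QcF} additionally contains condition (3), which we simply do not need here; note that (2) $\Leftrightarrow$ (3) in that lemma shows the reformulation in terms of $Q$ itself is equivalent, but the cleaner invariant statement is (2).)

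The only genuine point requiring care — and the step I would expect to be the main obstacle — is checking that all the identifications above are legitimate, in particular that a mere equivalence of linear categories $\mathcal{A} \approx \Mod^Q_{\fd}$ (not a priori exact, not a priori respecting any extra structure) suffices. This is fine because an equivalence of abelian categories is automatically exact, so it preserves and reflects simples, injectives, epimorphisms, and monomorphisms; it extends canonically to an equivalence of ind-completions by the functoriality of $\Ind(-)$ recalled from \cite{MR2182076}; and the coend formula defining $\Nak_{\Ind(\mathcal{A})}$ is manifestly invariant under linear equivalences since $\coHom$ and copowers are defined by universal properties. One should also note that $\mathcal{A}$ being locally finite forces $\mathcal{A}$ to be an abelian category in the first place (every object has finite length), so ``epimorphic image'' and ``injective object'' make sense, and that injectivity in $\Ind(\mathcal{A})$ is the relevant notion since $\mathcal{A}$ itself typically has no non-zero injectives. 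Once these bookkeeping points are dispatched, the proof is a direct appeal to Lemma~\ref{lem:Nakayama-QcF}.
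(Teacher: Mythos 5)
Your proof is correct and matches the paper's own argument: the paper likewise identifies $(\Mod^{Q}_{\fd})^{\op}$ with $\Mod^{Q^{\cop}}_{\fd}$ via $X \mapsto X^*$, so that the exactness of $\Nak_{\Ind(\mathcal{A}^{\op})}$ becomes the hypothesis \eqref{eq:Nakayama-QcF-assumption} of Lemma~\ref{lem:Nakayama-QcF}, and then reads off the equivalence of (1) and (2) as a direct rephrasing of that lemma. Your additional bookkeeping about linear equivalences being exact and extending to ind-completions is correct but left implicit in the paper.
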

\begin{proof}
  Since $(\Mod^{Q}_{\fd})^{\op}$ and $\Mod^{Q^{\cop}}_{\fd}$ are equivalent via the functor $X \mapsto X^*$, the exactness of $\Nak_{\Ind(\mathcal{A}^{\op})}$ is equivalent to the exactness of $\Nak_{Q^{\cop}}$.
  Thus this lemma is obtained just by rephrasing Lemma~\ref{lem:Nakayama-QcF}.
\end{proof}

\section{Frobenius property and tensor functors}
\label{sec:Fro-pro-ten-fun}

\subsection{Tensor categories and tensor functors}

For basics on tensor categories, we refer the reader to \cite{MR3242743}.
We first fix our convention on monoidal categories:
All monoidal categories are assumed to be strict in view of Mac Lane's strictness theorem.
Given a monoidal category $\mathcal{C}$, we usually denote by $\otimes$ and $\unitobj$ the tensor product and the unit object of $\mathcal{C}$, respectively.
A left dual object of $X \in \mathcal{C}$ is an object $X^{\vee} \in \mathcal{C}$ equipped with morphisms $\eval_X : X^{\vee} \otimes X \to \unitobj$ and $\coev_X : \unitobj \to X \otimes X^{\vee}$ satisfying the zig-zag equations (see \cite[Section 2.10]{MR3242743} for the precise definition).
A right dual object of $X$, denoted by ${}^{\vee}\!X$, is an object having $X$ as a left dual object.
A monoidal category $\mathcal{C}$ is said to be {\em rigid} if every object of $\mathcal{C}$ has a left and a right dual object.

A simple object of a linear abelian category is said to be {\em absolutely simple} if its endomorphism algebra is isomorphic to the base field $\bfk$.
A {\em tensor category} is a locally finite abelian category $\mathcal{C}$ equipped with a structure of a rigid monoidal category such that the tensor product of $\mathcal{C}$ is bilinear and the unit object of $\mathcal{C}$ is absolutely simple.
Given two tensor categories $\mathcal{C}$ and $\mathcal{D}$, a {\em tensor functor} from $\mathcal{C}$ to $\mathcal{D}$ is a linear exact functor $F: \mathcal{C} \to \mathcal{D}$ equipped with a natural isomorphism
\begin{equation*}
  F(X) \otimes F(Y) \to F(X \otimes Y)
  \quad (X, Y \in \mathcal{C})
\end{equation*}
and an isomorphism $\unitobj \to F(\unitobj)$ satisfying certain equations.

\subsection{Frobenius tensor categories}

The main subject of this paper is the following class of tensor categories:

\begin{definition}
  A {\em Frobenius tensor category} \cite{MR3410615} is a tensor category $\mathcal{C}$ such that every object of $\mathcal{C}$ has an injective hull in $\mathcal{C}$.
\end{definition}

We recall that a coalgebra $Q$ is left semiperfect if and only if every finite-dimensional right $Q$-comodule has a finite-dimensional injective hull \cite[Theorem 3.2.3]{MR1786197}. Thus a Frobenius tensor category is the same thing as a tensor category that is linearly equivalent to $\Mod^{Q}_{\fd}$ for some left semiperfect coalgebra $Q$. It is known that a Hopf algebra is left semiperfect if and only if it is QcF \cite[Theorem 5.3.2]{MR1786197}. Motivated by this result and some other characterizations of co-Frobenius Hopf algebras, we have given the following characterizations of Frobenius tensor categories:

\begin{lemma}[{\cite[Theorem 5.7]{2021arXiv211008739S}}]
  \label{lem:Frobenius-criterions-0}
  For a tensor category $\mathcal{C}$, the following are equivalent:
  \begin{enumerate}
  \item $\mathcal{C}$ is Frobenius.
  \item $\mathcal{C}$ has a non-zero injective object.
  \item $\mathcal{C}$ has a non-zero projective object.
  \item $\mathcal{C} \approx \Mod^{Q}_{\fd}$ for some QcF coalgebra $Q$.
  \end{enumerate}
\end{lemma}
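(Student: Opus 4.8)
The plan is to prove the equivalence of (1)--(4) by exploiting the dictionary between Frobenius tensor categories and left semiperfect coalgebras together with the characterizations of QcF coalgebras recalled in Section~\ref{sec:preliminaries}. Write $\mathcal{C} \approx \Mod^{Q}_{\fd}$ for a coalgebra $Q$, which exists because $\mathcal{C}$ is locally finite abelian. The rigidity of $\mathcal{C}$ is the crucial extra input: it forces the dual comodule functor $X \mapsto X^{\vee}$ to be an exact self-equivalence of $\mathcal{C}$ that exchanges injectives and projectives and, more importantly, it symmetrizes the left/right semiperfect and left/right QcF conditions, so that for $Q$ arising from a tensor category one need not worry about the left--right asymmetry that obstructs Lemma~\ref{lem:Nakayama-QcF} in general.

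The implications I would carry out, in order, are as follows. First, $(1)\Leftrightarrow(4)$: by \cite[Theorem 3.2.3]{MR1786197} a coalgebra is left semiperfect iff every finite-dimensional comodule has a finite-dimensional injective hull, so $\mathcal{C}$ is Frobenius iff $Q$ is left semiperfect; then I would use rigidity to upgrade ``left semiperfect'' to ``semiperfect'', and then to ``QcF'', by checking that the assumption \eqref{eq:Nakayama-QcF-assumption} of Lemma~\ref{lem:Nakayama-QcF} is automatic here --- this is where one shows $\Nak_{\Ind(\mathcal{A}^{\op})}$ is exact for a tensor category $\mathcal{A}$, presumably because the dual functor identifies $\mathcal{A}^{\op}$ with $\mathcal{A}$ again, reducing exactness of one Nakayama functor to exactness of the other, and the Nakayama functor of a tensor category is known to be exact (this exactness is flagged in the introduction as Lemma~\ref{lem:Nakayama-exactness}, but I would instead give a self-contained argument since that lemma comes later). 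The implications $(4)\Rightarrow(2)$ and $(4)\Rightarrow(3)$ are then easy: a QcF coalgebra is semiperfect, so every simple has a finite-dimensional projective cover, which is injective by \cite[Theorem 3.3.4]{MR1786197}; this produces a non-zero projective object lying in $\mathcal{C}$, and its dual gives a non-zero injective object in $\mathcal{C}$. For $(2)\Rightarrow(1)$ and $(3)\Rightarrow(1)$, I would argue contrapositively or directly: given a non-zero injective (resp.\ projective) $P \in \mathcal{C}$, rigidity lets us tensor $P$ with arbitrary objects to ``spread'' injectivity/projectivity --- $P \otimes X$ is again injective for all $X$ because $- \otimes X$ has an exact right adjoint $- \otimes {}^{\vee}\!X$ --- and since $\mathbf{1}$ is a subquotient of $P^{\vee} \otimes P$, every object receives (resp.\ admits a quotient by) an injective (resp.\ projective), whence every object has an injective hull; the injective-hull existence is then a formal consequence of every object embedding into an injective in a locally finite category.

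I expect the main obstacle to be the step $(1)/(4) \Rightarrow$ the \emph{two-sided} QcF condition, i.e.\ verifying that the Nakayama-exactness hypothesis \eqref{eq:Nakayama-QcF-assumption} holds for coalgebras coming from tensor categories, and more generally making precise how rigidity trades ``one-sided'' for ``two-sided'' hypotheses without circularity with the later Lemma~\ref{lem:Nakayama-exactness}. The cleanest route is probably: rigidity gives $\mathcal{C}^{\op} \approx \mathcal{C}$ as tensor categories via $X \mapsto X^{\vee}$, hence $Q^{\cop}$ (up to Morita--Takeuchi equivalence) is again the coalgebra of a tensor category, so it suffices to prove once and for all that \emph{any} coalgebra underlying a tensor category satisfies \eqref{eq:Nakayama-QcF-assumption}; for that I would use that in a tensor category $- \otimes X$ is exact with exact adjoint on both sides, which forces the relevant rational-part comodule $C^{*\mathrm{rat}}_{r}$ to be injective. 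Everything else is bookkeeping with the standard equivalences $\Mod^{Q}_{\fd} \approx \mathcal{C}$, $\Ind(\mathcal{C}) \approx \Mod^{Q}$, and the duality functor; the conceptual weight sits entirely in combining rigidity with Lemma~\ref{lem:Nakayama-QcF}.
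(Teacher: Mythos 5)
The paper does not actually prove this lemma: it is quoted verbatim from \cite[Theorem 5.7]{2021arXiv211008739S}, so there is no in-paper proof to compare against line by line. That said, your route is exactly the one the present paper's machinery is built to support: the elementary rigidity arguments for $(1)\Leftrightarrow(2)\Leftrightarrow(3)$, and Lemma~\ref{lem:Nakayama-QcF}/\ref{lem:Nakayama-QcF-2} combined with the exactness of the Nakayama functor of a tensor category for the equivalence with (4). Indeed the proof of Lemma~\ref{lem:Frobenius-criterions} in Section~\ref{sec:exact-seq-ten-cat} carries out precisely the hard implication you describe. Your worry about circularity with Lemma~\ref{lem:Nakayama-exactness} is unfounded: its proof uses only Lemma~\ref{lem:Nakayama-double-adj} (imported from \cite{2021arXiv211008739S}), not the present lemma, so you may simply prove it first rather than reproving it ad hoc; your alternative sketch (``$-\otimes X$ exact with exact adjoints forces $C^{*\mathrm{rat}}_r$ to be injective'') is not an argument as it stands.

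Two points need tightening. First, the chain ``left semiperfect $\Rightarrow$ semiperfect $\Rightarrow$ QcF, by checking \eqref{eq:Nakayama-QcF-assumption}'' is a non sequitur as written: semiperfectness gives each simple a projective cover, but condition (2) of Lemma~\ref{lem:Nakayama-QcF} asks that each simple be an epimorphic image of an \emph{injective}, and a projective cover over a merely semiperfect coalgebra need not be injective (injective $=$ projective is a consequence of QcF, i.e.\ of the conclusion). What actually produces condition (2) is the tensor structure: for a non-zero finite-length injective $E$ one has the epimorphism $S\otimes E^{\vee}\otimes E \to S$ induced by $\eval_E$, with $S\otimes E^{\vee}\otimes E$ injective. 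You state this trick only in your $(2)\Rightarrow(1)$ discussion; it must be invoked here, and then the detour through semiperfectness is unnecessary. Second, ``$\unitobj$ is a subquotient of $P^{\vee}\otimes P$'' is too weak for your purposes: a subquotient of an injective need not embed into it. You need the standard facts that $\coev_P:\unitobj\to P\otimes P^{\vee}$ is a monomorphism and $\eval_P:P^{\vee}\otimes P\to\unitobj$ is an epimorphism for $P\neq 0$, which give the required embedding into (resp.\ quotient of) an injective (resp.\ projective) after tensoring. With these repairs the argument is complete, using also \cite[Theorem 2.4.17]{MR1786197} to pass between injectivity in $\mathcal{C}$ and in $\Ind(\mathcal{C})$.
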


The Nakayama functor played an essential role in the proof. We will give further equivalence conditions for $\mathcal{C}$ to be Frobenius in Subsection \ref{subsec:Nakayama-fun} again using the Nakayama functor.

\subsection{Frobenius property and tensor functors}

Let $\phi: H \to K$ be a map of Hopf algebras, and regard $H$ as a right $K$-comodule by the coaction induced by $\phi$.
We say that a right $K$-comodule $M$ is {\em finitely cogenerated} if there are a positive integer $n$ and a monomorphism of right $K$-comodules from $M$ to $K^{\oplus n}$.
Andruskiewitsch and Cuadra \cite[Corollary 2.9]{MR3032811} gave the following criteria for $H$ or $K$ to be co-Frobenius:
\begin{enumerate}
\item [(a)] If $H$ is co-Frobenius and $H \in \Mod^K$ is injective, then $K$ is co-Frobenius.
\item [(b)] If $K$ is co-Frobenius and $H \in \Mod^K$ is finitely cogenerated, then $H$ is co-Frobenius.
\end{enumerate}

We aim to extend these results to the setting of tensor categories.
For this purpose, we interpret the conditions for $H$ in (a) and (b) in category-theoretical terms.
Let $\phi_{*} : {}^H\Mod \to {}^K\Mod$ be the functor induced by $\phi$ between the categories of left comodules.
This functor has a right adjoint
\begin{equation*}
  \phi_{*}^{\radj} := H \mathop{\square}\nolimits_{K} (-) : {}^{K}\Mod \to {}^{H}\Mod,
\end{equation*}
where $\mathop{\square}\nolimits_{K}$ means the cotensor product over $K$ \cite[\S2.3]{MR1786197}.
By \cite[Exercise 2.4.23]{MR1786197}, $H$ is injective as a right $K$-comodule if and only if $\phi_{*}^{\radj}$ is exact.

We assume that $H \in \Mod^K$ is finitely cogenerated. By definition, there are a positive integer $n$ and a monomorphism $H \hookrightarrow K^{\oplus n}$ of right $K$-comodules. Since the cotensor product preserves monomorphisms, we have a monomorphism
\begin{equation*}
  \phi_{*}^{\radj}(M) = H \mathop{\square}\nolimits_{K} M \hookrightarrow K^{\oplus n} \mathop{\square}\nolimits_{K} M \cong M^{\oplus n}
\end{equation*}
of vector spaces. Hence, $\phi_{*}^{\radj}(M)$ is finite-dimensional for all $M \in {}^K\Mod_{\fd}$.
Although it is slightly off-topic, we give a characterization when the functor $\phi_{*}^{\radj}$ preserves finite-dimensionality.

\begin{lemma}
  For a map $\phi: H \to K$ of Hopf algebras, the following are equivalent:
  \begin{enumerate}
  \item The vector space $\phi_{*}^{\radj}(M)$ is finite-dimensional for all $M \in {}^K\Mod_{\fd}$.
  \item $H$ is quasi-finite as a left $K$-comodule.
  \end{enumerate}
\end{lemma}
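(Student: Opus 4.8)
The plan is to prove the equivalence by recognizing both conditions as statements about the comodule $H$ relative to $K$, and then invoking the characterization of quasi-finiteness recalled in the excerpt. Recall that a right $K$-comodule $Q$ is quasi-finite precisely when $\Hom^K(X, Q)$ is finite-dimensional for all $X \in \Mod^K_{\fd}$; by the duality $X \mapsto X^*$ this is the same as saying that $H$ (viewed as a left $K$-comodule) is quasi-finite in the analogous sense, i.e. ${}^K\!\Hom(Q, H)$ is finite-dimensional for all finite-dimensional left $K$-comodules $Q$. The key observation linking this to $\phi_*^{\radj} = H \mathbin{\square}_K (-)$ is the cotensor-Hom type identity
\begin{equation*}
  \phi_{*}^{\radj}(M) = H \mathop{\square}\nolimits_{K} M \cong {}^{K}\!\Hom(M^*, H)
  \quad (M \in {}^{K}\!\Mod_{\fd}),
\end{equation*}
which holds because cotensoring with a finite-dimensional comodule $M$ is the same as taking $K$-colinear maps out of its dual $M^*$ (this is \cite[\S2.3]{MR1786197}, or follows from the standard isomorphism $N \mathbin{\square}_K M \cong {}^K\!\Hom(M^*, N)$ for finite-dimensional $M$).

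Granting this identification, the argument is immediate in both directions. First I would show (2) $\Rightarrow$ (1): if $H$ is quasi-finite as a left $K$-comodule, then for every $M \in {}^K\!\Mod_{\fd}$ the space ${}^K\!\Hom(M^*, H)$ is finite-dimensional, hence so is $\phi_*^{\radj}(M)$. Conversely, for (1) $\Rightarrow$ (2): given a finite-dimensional left $K$-comodule $Q$, put $M = Q^*$, which is again finite-dimensional; then ${}^K\!\Hom(Q, H) \cong H \mathbin{\square}_K Q^* = \phi_*^{\radj}(Q^*)$ is finite-dimensional by hypothesis. Since $Q$ was arbitrary, $H$ is quasi-finite as a left $K$-comodule.

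The only real point requiring care is the cotensor-Hom identity above, and in particular making sure the finiteness hypothesis on $M$ is used correctly (for infinite-dimensional $M$ the formula would need $M$ on the other side or a limit, but we only ever apply it to finite-dimensional comodules, which is exactly the situation in both conditions of the lemma). I expect this to be the main — indeed the only — obstacle, and it is a routine consequence of the basic theory of cotensor products; once it is in place the lemma follows with no further work. I would phrase the proof so as to cite \cite[\S2.3]{MR1786197} for the relevant properties of $\mathbin{\square}_K$ and leave the bookkeeping to the reader.
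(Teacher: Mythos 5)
Your proof is correct and follows essentially the same route as the paper's: both rest on the cotensor--Hom isomorphism $H \mathbin{\square}_K X \cong {}^K\Hom(X^*, H)$ for finite-dimensional $X$ (the paper cites \cite[Proposition 2.3.7]{MR1786197}) together with the duality $X \mapsto X^*$, after which the equivalence with quasi-finiteness is immediate. You merely spell out the two implications that the paper leaves as ``the claim easily follows.''
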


Although we do not know any concrete counterexamples,
we think that the equivalent conditions (1) and (2) of this lemma are strictly weaker than the condition that $H$ is finitely cogenerated as a right $K$-comodule.

\begin{proof}
  The assignment $X \mapsto X^*$ gives rise to an antiequivalence between ${}^K\Mod_{\fd}^{}$ and $\Mod^K_{\fd}$, and there is a natural isomorphism $H \mathop{\square}\nolimits_K X \cong {}^K\Hom(X^*, H)$ of vector spaces for $X \in {}^K\Mod_{\fd}^{}$ \cite[Proposition 2.3.7]{MR1786197}.
  The claim easily follows from this isomorphism and the definition of quasi-finiteness.
\end{proof}

Taking the above consideration on $\phi_{*}^{\radj}$ into account, we extend the result of Andruskiewitsch and Cuadra \cite[Corollary 2.9]{MR3032811} to the setting of tensor categories as follows:
Let $\mathcal{C}$ and $\mathcal{D}$ be tensor categories, and let $F: \mathcal{C} \to \mathcal{D}$ be a tensor functor.
Since $F$ is right exact by definition, it admits an ind-adjoint, that is, a right adjoint of the functor $\Ind(\mathcal{C}) \to \Ind(\mathcal{D})$ induced by $F$.

\begin{theorem}
  \label{thm:Frobenius-and-tensor-functors}
  Let $F : \mathcal{C} \to \mathcal{D}$ be a tensor functor, and let $G$ be an ind-adjoint of $F$. Then the following hold:
  \begin{enumerate}
  \item [(a)] If $\mathcal{C}$ is Frobenius and $G$ is exact, then $\mathcal{D}$ is Frobenius.
  \item [(b)] If $\mathcal{D}$ is Frobenius and $G(\mathcal{D}) \subset \mathcal{C}$, then $\mathcal{C}$ is Frobenius.
  \end{enumerate}
\end{theorem}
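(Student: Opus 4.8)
The plan is to prove each part by producing a non-zero projective or injective object and then invoking Lemma~\ref{lem:Frobenius-criterions-0}, which tells us that a tensor category is Frobenius as soon as it has a non-zero injective (equivalently projective) object. Throughout, write $\widetilde{F}: \Ind(\mathcal{C}) \to \Ind(\mathcal{D})$ for the colimit-preserving extension of $F$, so that $G$ is the right adjoint of $\widetilde{F}$; since $F$ is a tensor functor it is exact on $\mathcal{C}$, and $\widetilde{F}$ sends the unit object $\unitobj_{\mathcal{C}}$ to $\unitobj_{\mathcal{D}}$ (up to isomorphism). The two standard adjunction facts I will use are: a right adjoint of an exact functor preserves injectives, and a left adjoint of an exact functor preserves projectives.

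For part~(a), assume $\mathcal{C}$ is Frobenius and $G$ is exact. By Lemma~\ref{lem:Frobenius-criterions-0}, $\mathcal{C}$ has a non-zero injective object $I$; as $\mathcal{C} \subset \Ind(\mathcal{C})$ is closed under injective hulls, $I$ is also injective in $\Ind(\mathcal{C})$. The hoped-for dual statement — that $\widetilde{F}$, having the exact right adjoint $G$, preserves projectives — would be the natural move if we had a projective in hand, but Lemma~\ref{lem:Frobenius-criterions-0} only guarantees an injective. The right approach is instead to dualize: apply everything to the opposite tensor functor $F^{\op}: \mathcal{C}^{\op} \to \mathcal{D}^{\op}$ (using that $\mathcal{C}^{\op}$, $\mathcal{D}^{\op}$ are again tensor categories, with injectives and projectives swapped and that a tensor category is Frobenius iff its opposite is — all simple objects having injective hulls iff all have projective covers, which for a tensor category are equivalent to having one non-zero injective), and note that an ind-adjoint of $F^{\op}$ is $G^{\op}$, which is exact iff $G$ is. So it suffices to prove: if $\mathcal{D}$ has a non-zero projective $P$ and $\widetilde{F}$ admits a left adjoint that is... — but $\widetilde{F}$'s left adjoint need not exist. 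The cleanest route is therefore the following: from the non-zero projective $P \in \mathcal{C}$ furnished by $\mathcal{C}$ being Frobenius, show $\widetilde{F}(P)$ is a non-zero projective in $\Ind(\mathcal{D})$ — non-zero because $\Hom_{\mathcal{D}}(\widetilde F(P), \widetilde F(P)) \ne 0$ via the lax structure / faithfulness of a tensor functor (a tensor functor between tensor categories is faithful, as $F(\unitobj)\cong\unitobj\ne 0$ forces injectivity on nonzero objects), and projective because $\widetilde{F}$ has the exact right adjoint $G$. Then the finite-length truncation of $\widetilde F(P)$ in $\mathcal D$ (take the socle of a suitable subquotient, or rather a simple quotient of $\widetilde F(P)$ which is then a direct summand) yields a non-zero projective in $\mathcal{D}$, and Lemma~\ref{lem:Frobenius-criterions-0} finishes it. I expect the only subtlety here is passing from a projective object of $\Ind(\mathcal{D})$ to a projective object of $\mathcal{D}$ itself, which is handled by decomposing into indecomposables, each a projective cover of a simple, hence of finite length.

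For part~(b), assume $\mathcal{D}$ is Frobenius and $G(\mathcal{D}) \subset \mathcal{C}$; I want to show $\mathcal{C}$ has a non-zero injective object. Pick a non-zero injective object $E \in \mathcal{D} \subset \Ind(\mathcal{D})$ via Lemma~\ref{lem:Frobenius-criterions-0}. The functor $\widetilde{F}$ is exact (it is colimit-preserving and extends the exact $F$, and on $\Ind$ of locally finite categories a colimit-preserving functor restricting to an exact functor is exact), so its right adjoint $G$ preserves injectives; hence $G(E) \in \Ind(\mathcal{C})$ is injective, and by hypothesis $G(E) \in \mathcal{C}$. It remains to check $G(E) \ne 0$: by adjunction $\Hom_{\Ind(\mathcal{C})}(\unitobj_{\mathcal{C}}, G(E)) \cong \Hom_{\Ind(\mathcal{D})}(\widetilde F(\unitobj_{\mathcal{C}}), E) \cong \Hom_{\mathcal{D}}(\unitobj_{\mathcal{D}}, E)$, which is non-zero because $\unitobj_{\mathcal{D}}$ embeds into its injective hull, a direct summand of $E$ up to enlarging — more precisely, $\unitobj_{\mathcal D}$ is a subobject of some injective, and any non-zero injective $E$ of a Frobenius tensor category has all indecomposable injectives as summands after passing to $E^{\oplus\kappa}$; to keep $E$ finite it is simpler to just take $E$ to be the injective hull of $\unitobj_{\mathcal D}$ directly, so that $\Hom_{\mathcal D}(\unitobj_{\mathcal D}, E)\ne0$ is immediate. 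Then $G(E)$ is a non-zero injective object of $\mathcal{C}$, and Lemma~\ref{lem:Frobenius-criterions-0} gives that $\mathcal{C}$ is Frobenius.

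The main obstacle, as flagged above, is bookkeeping around the distinction between $\Ind(\mathcal{C})$ and $\mathcal{C}$: ensuring that the injective/projective objects produced actually lie in (or can be truncated to) the finite-length subcategory, and that they are genuinely non-zero. Both issues are resolved by choosing the auxiliary object $E$ (resp. $P$) to be an injective hull of $\unitobj$ (resp. a projective cover of $\unitobj$), so that the non-vanishing of the relevant $\Hom$ with $\unitobj$ is automatic, and by using that indecomposable injectives/projectives in a Frobenius tensor category have finite length. A secondary point requiring care is the exactness of $\widetilde{F}$ and the passage between $F$ and $F^{\op}$ in part~(a); I would state the opposite-category Frobenius symmetry as a short remark before the proof, or avoid it entirely by arguing part~(a) directly with a projective cover of $\unitobj$ as above.
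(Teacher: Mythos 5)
Your proposal is correct and follows essentially the same route as the paper: for (a) one takes a non-zero projective $P \in \mathcal{C}$ and shows $F(P)$ is a non-zero projective of $\mathcal{D}$ via $\Hom_{\mathcal{D}}(F(P),-) \cong \Hom_{\Ind(\mathcal{C})}(P, G(-))$ and the exactness of $G$, and for (b) one produces a non-zero injective $E \in \mathcal{D}$ with $G(E) \ne 0$ and deduces that $G(E) \in \mathcal{C}$ is a non-zero injective via $\Hom_{\mathcal{C}}(-, G(E)) \cong \Hom_{\mathcal{D}}(F(-), E)$. The only differences are cosmetic: in (b) you choose $E$ to be the injective hull of $\unitobj_{\mathcal{D}}$ so that $G(E) \ne 0$ is immediate from the adjunction, whereas the paper argues by contradiction that $G$ cannot kill all injectives; and in (a) your ``finite-length truncation'' of $F(P)$ is unnecessary, since $P \in \mathcal{C}$ already gives $F(P) \in \mathcal{D}$.
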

\begin{proof}
  (a) Suppose that $\mathcal{C}$ is Frobenius and $G$ is exact.
  Let $P \in \mathcal{C}$ be a non-zero projective object (which exists by Lemma \ref{lem:Frobenius-criterions-0}).
  We note that $P$ is projective also in $\Ind(\mathcal{C})$ (see \cite[Lemma 2.4]{2021arXiv211008739S}).
  There is an isomorphism
  \begin{equation*}
    \Hom_{\mathcal{D}}(F(P), -)
    \cong \Hom_{\Ind(\mathcal{C})}(P, G(-))
    = \Hom_{\Ind(\mathcal{C})}(P, -) \circ G
  \end{equation*}
  of functors from $\mathcal{D}$ to $\Vect$.
  Since $G$ is exact, and since $F$ is faithful, $F(P)$ is a non-zero projective object of $\mathcal{D}$.
  By Lemma~\ref{lem:Frobenius-criterions-0}, $\mathcal{D}$ is Frobenius.

  (b) Suppose that $\mathcal{D}$ is Frobenius and $G(\mathcal{D}) \subset \mathcal{C}$.
  By the latter assumption, we may, and do, regard $G$ as a functor from $\mathcal{D}$ to $\mathcal{C}$.
  If $G(E) = 0$ for all injective objects $E \in \mathcal{D}$, then $G = 0$ as a functor since $\mathcal{D}$ has enough injective objects and $G$ is left exact as a right adjoint of $F$. Hence we have $F = 0$, a contradiction. Thus there is an injective object $E \in \mathcal{D}$ such that $G(E) \ne 0$.
  We pick such an object $E$. Then there is an isomorphism
  $\Hom_{\mathcal{C}}(-, G(E)) \cong \Hom_{\mathcal{D}}(-, E) \circ F$
  of functors from $\mathcal{C}$ to $\Vect$. Since $F$ is exact, and since $E$ is injective, we conclude that $G(E)$ is a non-zero injective object of $\mathcal{C}$.
  By Lemma~\ref{lem:Frobenius-criterions-0}, $\mathcal{C}$ is Frobenius.
\end{proof}

\section{Exact sequence of Frobenius tensor categories}
\label{sec:exact-seq-ten-cat}

\subsection{Exact sequence of tensor categories}
\label{subsec:exact-seq-ten-cat}

An exact sequence of tensor categories \cite{MR2863377,MR3161401,MR4281372} is a sequence of tensor functors giving a category-theoretical interpretation of an exact sequence of Hopf algebras.
Natale \cite[Question 6.12]{MR4281372} asked if the class of Frobenius tensor categories is closed under exact sequences.
The goal of this section is to give an affirmative answer to this question (Theorem~\ref{thm:Frobenius-closed-under-exact-seq}).
The Nakayama functor plays a crucial role in our approach.

Let $\mathcal{C}$ be a tensor category.
An object of $\mathcal{C}$ is said to be {\em trivial} if it is isomorphic to a direct sum of finitely many copies of the unit object.
Given an object $X \in \mathcal{C}$, we denote by $X_{\triv}$ the maximal trivial subobject of $X$ (which exists because $X$ has finite length).
We recall from \cite{MR2863377} the following definition:

\begin{definition}
  An {\em exact sequence} of tensor categories is a sequence
  \begin{equation*}
    \mathcal{C}' \xrightarrow{\quad \iota \quad} \mathcal{C} \xrightarrow{\quad F \quad} \mathcal{D}      
  \end{equation*}
  of tensor functors between tensor categories $\mathcal{C}$, $\mathcal{C}'$ and $\mathcal{D}$ such that the following conditions are satisfied:
  \begin{enumerate}
  \item The functor $\iota$ is fully faithful, so that we may regard $\mathcal{C}'$ as a tensor full subcategory of $\mathcal{C}$.
  \item The functor $F$ is {\em dominant} in the sense that every object of $\mathcal{D}$ is a subobject of an object of the form $F(X)$ for some $X \in \mathcal{C}$.
  \item The functor $F$ is {\em normal} in the sense that every object $X \in \mathcal{C}$ has a subobject $X_0$ such that $F(X_0) = F(X)_{\triv}$ as subobjects of $F(X)$.
  \item The kernel of $F$, defined and denoted by
    \begin{equation*}
      \Ker(F) = \{ X \in \mathcal{C} \mid \text{$F(X)$ is trivial} \},
    \end{equation*}
    coincides with the essential image of the tensor functor $\iota$.
  \end{enumerate}
\end{definition}

A tensor functor $F: \mathcal{C} \to \mathcal{D}$ is dominant if and only if every object of $\mathcal{D}$ is a quotient of an object of the form $F(X)$ for some $X \in \mathcal{C}$, if and only if an ind-adjoint of $F$ is faithful \cite[Lemma 3.1]{MR2863377}.

The following characterization of the normality is useful:

\begin{lemma}
  \label{lem:normality-BN11-Prop-3-5}
  Let $F: \mathcal{C} \to \mathcal{D}$ be a tensor functor between tensor categories $\mathcal{C}$ and $\mathcal{D}$, and let $G$ be an ind-adjoint of $F$. Then $F$ is normal if and only if every subobject of $G(\unitobj)$ of finite length belongs to $\Ker(F)$.
\end{lemma}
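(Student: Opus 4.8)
The plan is to translate the normality of $F$ into a statement about the unit object and its subobjects on the $\Ind(\mathcal{D})$-side, and then transport that across the adjunction $F \dashv G$ to a statement about subobjects of $G(\unitobj)$.

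First I would unwind the definition of normality. The functor $F$ is normal precisely when, for every $X \in \mathcal{C}$, the maximal trivial subobject $F(X)_{\triv}$ of $F(X)$ is itself of the form $F(X_0)$ for some subobject $X_0 \subseteq X$. Since $F(X_0)$ is then automatically a trivial object (it is a subobject of $F(X)_{\triv}$, which is trivial), the condition $F(X_0) = F(X)_{\triv}$ forces $X_0 \in \Ker(F)$. Conversely, $\Ker(F) = \iota(\mathcal{C}')$ has nice closure properties, so the existence of such an $X_0$ is what must be verified. The key reformulation I expect to use is: $F$ is normal if and only if, for every $X \in \mathcal{C}$ and every morphism $u : \unitobj^{\oplus n} \to F(X)$ in $\mathcal{D}$, the subobject $\Img(u) \subseteq F(X)$ lifts along $F$ to a subobject of $X$ lying in $\Ker(F)$. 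Passing to the maximal such image recovers $F(X)_{\triv}$.

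Next I would use the adjunction. A morphism $\unitobj^{\oplus n} \to F(X)$ in $\Ind(\mathcal{D})$ corresponds, via $F \dashv G$ (together with the fact that $F(\unitobj) \cong \unitobj$ as monoidal functor), to a morphism $\unitobj^{\oplus n} \to G(F(X))$, or more conveniently to a morphism into $G(\unitobj)$ after precomposing with the unit $\unitobj \to G(F(\unitobj)) \cong G(\unitobj)$ of the adjunction. The point is that morphisms $\unitobj \to F(X)$ in $\mathcal{D}$ are in bijection with morphisms $G(\unitobj) \to $ (something built from $X$), but more to the point: one tests whether $F(X)_{\triv}$ lifts by testing whether the canonical morphism from a suitable subobject of $G(\unitobj)$ does the right thing. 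Concretely, the argument in \cite[Proposition 3.5]{MR2863377} shows that $F$ is normal if and only if the unit $\unitobj \to G(\unitobj)$ (in $\Ind(\mathcal{C})$) has the property that every finite-length subobject $Y \subseteq G(\unitobj)$ satisfies $F(Y \cap \unitobj^{\text{-stuff}})$... — more cleanly, one reduces to: $F$ is normal iff every finite-length subobject of $G(\unitobj)$ lies in $\Ker(F)$. I would spell this out by noting that $G(\unitobj)$, being the "universal" receptacle, has the property that subobjects of $F(X)_{\triv}$ correspond to finite-length subobjects of $G(\unitobj)$ that are pulled back along the canonical map $X \to G(F(X)) \to$ and then the image in $G(\unitobj)$; taking $X$ ranging over all objects of $\mathcal{C}$ exhausts exactly the finite-length subobjects of $G(\unitobj)$, using that $G(\unitobj) = \varinjlim$ of its finite-length subobjects and that each such subobject factors through $F(X)$ for some $X$ by dominance-type arguments (or simply because $G(\unitobj)$ is an object of $\Ind(\mathcal{C})$, hence a filtered colimit of objects of $\mathcal{C}$).

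The main obstacle I anticipate is the bookkeeping in the "only if" direction: showing that \emph{every} finite-length subobject $Y$ of $G(\unitobj)$ arises as (the image of) something of the form $X_0 \subseteq X$ with $F(X_0) = F(X)_{\triv}$, so that normality forces $Y \in \Ker(F)$. Here I would take $X$ to be $Y$ itself, viewed via the counit $F(Y) \to F(G(\unitobj)^{\text{f.l.}}) \to \unitobj$, observe that the image of this composite is a trivial subobject of $\unitobj$ hence $\unitobj$ or $0$, and chase that $Y_{\triv}$ on the nose equals $Y$ when $Y \hookrightarrow G(\unitobj)$ and $\unitobj$ is absolutely simple — the absolute simplicity of $\unitobj$ is what makes $\Hom(\unitobj^{\oplus n}, \unitobj^{\oplus m})$ a matrix algebra over $\bfk$ and hence makes "trivial" subobjects behave like a genuine isotypic component. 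The converse direction ("if": finite-length subobjects of $G(\unitobj)$ in $\Ker(F)$ $\Rightarrow$ normal) is the easier half: given $X$ and $u : \unitobj^{\oplus n} \to F(X)$, adjoint-transpose to $\tilde u : \unitobj^{\oplus n} \to G(F(X))$, compose with $G$ applied to nothing — rather, use that $\Img(\tilde u)$ maps to $X$ via the counit $G F(X) \to X$? no, the counit goes the other way; instead use the unit $X \to GF(X)$ and pull back $\Img(\tilde u)$ along it to get $X_0 \subseteq X$, check $X_0 \in \Ker(F)$ by reducing to the $G(\unitobj)$ case via $F(\unitobj) \cong \unitobj$, and check $F(X_0) = F(X)_{\triv}$ by a dimension/length count. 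I would organize the write-up as: (i) reformulate normality via morphisms out of trivial objects; (ii) set up the adjunction bijection carefully using $F(\unitobj) \cong \unitobj$; (iii) prove "if"; (iv) prove "only if" using absolute simplicity of $\unitobj$ to pin down maximal trivial subobjects.
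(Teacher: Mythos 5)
Your proposal has a genuine gap, and it is located exactly where the paper has to do real work. You keep normality in its subobject form throughout --- everything is phrased via morphisms $\unitobj^{\oplus n} \to F(X)$ and the maximal trivial subobject $F(X)_{\triv}$ --- but the adjunction $F \dashv G$ transports morphisms \emph{out of} $F(X)$, not into it: the usable isomorphism is $\Hom_{\mathcal{D}}(F(X), \unitobj) \cong \Hom_{\Ind(\mathcal{C})}(X, G(\unitobj))$, whose images on the right are precisely the finite-length subobjects of $G(\unitobj)$. The paper's first move is therefore to dualize: since tensor functors preserve duals and $\overline{X}_{\triv} = {}^{\vee}((X^{\vee})_{\triv})$, normality is equivalent to the statement that each $X$ has a \emph{quotient} $X_0$ with $F(X_0) = \overline{F(X)}_{\triv}$, the largest trivial quotient. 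You never make this conversion, and without it your attempts to reach $G(\unitobj)$ do not go through: transposing $u: \unitobj^{\oplus n} \to F(X)$ lands you in $GF(X)$, and there is no natural map from $GF(X)$ (or from subobjects of it pulled back along the unit $X \to GF(X)$) to $G(\unitobj)$. Your own text betrays this --- the claim that one gets ``a morphism into $G(\unitobj)$ after precomposing with the unit'' is not a well-defined operation, the placeholder ``$F(Y \cap \unitobj^{\text{-stuff}})$'' is where the actual argument should be, and the sentence ``one reduces to: $F$ is normal iff every finite-length subobject of $G(\unitobj)$ lies in $\Ker(F)$'' simply restates the lemma.

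Concretely, both directions as you sketch them fail. For ``only if'': taking $Y \subseteq G(\unitobj)$ of finite length, the inclusion transposes to a single morphism $F(Y) \to \unitobj$, which constrains trivial \emph{quotients} of $F(Y)$; normality in your subobject form constrains trivial \emph{subobjects}, and absolute simplicity of $\unitobj$ does not bridge these in a non-semisimple category. (The paper instead applies quotient-form normality to get $q: Y \to Y_0$ with $F(Y_0) = \overline{F(Y)}_{\triv}$, observes via the adjunction square that every morphism $Y \to G(\unitobj)$ then factors through $q$, applies this to the monic inclusion $Y \hookrightarrow G(\unitobj)$ to force $q$ to be an isomorphism, and concludes $F(Y)$ is trivial.) For ``if'': your recipe ``pull back $\Img(\tilde u)$ along the unit, check $X_0 \in \Ker(F)$ by reducing to the $G(\unitobj)$ case, check $F(X_0) = F(X)_{\triv}$ by a dimension/length count'' has no justified step --- the reduction to $G(\unitobj)$ is unsubstantiated for the reason above, and a length count cannot identify a subobject with $F(X)_{\triv}$. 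The correct construction takes $X_0$ to be the largest \emph{quotient} of $X$ lying in $\Ker(F)$ and uses the hypothesis on $G(\unitobj)$ to show that every morphism $X \to G(\unitobj)$ factors through $X \to X_0$ (because its image is a finite-length subobject of $G(\unitobj)$, hence in $\Ker(F)$), whence $\Hom_{\mathcal{D}}(F(X),\unitobj) \cong \Hom_{\mathcal{D}}(F(X_0),\unitobj)$ and $F(X_0) = \overline{F(X)}_{\triv}$.
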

\begin{proof}
  This lemma has been proved in \cite[Proposition 3.5]{MR2863377} under the assumption that $F$ has a right adjoint, or, equivalently, $G(\mathcal{D}) \subset \mathcal{C}$.
  Despite that $G$ is only an ind-adjoint in our setting, we can prove this lemma in a similar way as \cite{MR2863377}.
  For the sake of completeness, we include a proof.

  To prove this lemma, it is convenient to paraphrase the normality of $F$ as follows:
  Given an object $X$ of a tensor category, we denote by $\overline{X}_{\triv}$ the largest trivial quotient of $X$ (which exists because $X$ has finite length).
  By the duality, the object $\overline{X}_{\triv}$ may be identified with ${}^{\vee}((X^{\vee})_{\triv})$.
  Since a tensor functor preserves duals, the normality of $F$ is equivalent to the following proposition: For every object $X \in \mathcal{C}$, there is a quotient object $X_0$ of $X$ such that $F(X_0) = \overline{F(X)}_{\triv}$ as quotient objects of $F(X)$.

  \medskip \underline{The `only if' part}.
  We assume that $F$ is normal.
  Let $X$ be a subobject of $G(\unitobj)$ of finite length.
  We aim to show that $X$ belongs to $\Ker(F)$.
  By the normality of $F$, there is an epimorphism $q: X \to X_0$ in $\mathcal{C}$ such that $F(X_0) = \overline{F(X)}_{\triv}$ as quotient objects of $F(X)$.
  We consider the following commutative diagram:
  \begin{equation}
    \label{eq:normality-BN11-Prop-3-5-proof}
    \begin{tikzcd}
      \Hom_{\mathcal{D}}(F(X_0), \unitobj)
      \arrow[r, "\psi"]
      \arrow[d, "{\text{adjunction}}"', "\cong"]
      & \Hom_{\mathcal{D}}(F(X), \unitobj)
      \arrow[d, "{\text{adjunction}}", "\cong"'] \\
      \Hom_{\Ind(\mathcal{C})}(X_0, G(\unitobj))
      \arrow[r, "\phi"]
      & \Hom_{\Ind(\mathcal{C})}(X, G(\unitobj)),
    \end{tikzcd}
  \end{equation}
  where $\psi = \Hom_{\mathcal{D}}(F(q), \unitobj)$ and $\phi = \Hom_{\Ind(\mathcal{C})}(q, G(\unitobj))$.
  Since $F(X_0) = \overline{F(X)}_{\triv}$, the map $\psi$ in the above diagram is bijective, and thus so is $\phi$.
  Let $i: X \to G(\unitobj)$ be the inclusion morphism.
  By the above discussion, there exists a morphism $f: X_0 \to G(\unitobj)$ in $\mathcal{C}$ such that $i = f \circ q$. Since $i$ is monic, so is $q$. Thus $q$ is in fact an isomorphism. This means that $F(X)$ is trivial.

  \medskip \underline{The `if' part}.
  We assume that every subobject of $G(\unitobj)$ of finite length belongs to $\Ker(F)$.
  Let $X \in \mathcal{C}$ be an arbitrary object, and let $X_0 \in \mathcal{C}$ be the largest quotient object of $X$ belonging to $\Ker(F)$.
  With $q : X \to X_0$ the quotient morphism, we consider the commutative diagram \eqref{eq:normality-BN11-Prop-3-5-proof} above.
  By the definition of $X_0$, the map $\phi$ in the diagram is bijective, and thus so is $\psi$.
  In other words, every morphism $F(X) \to \unitobj$ factors uniquely through $F(q)$.
  This means that $F(X_0) = \overline{F(X)}_{\triv}$ as quotient objects of $F(X)$.
  The proof is done.
\end{proof}

\subsection{Nakayama functor of tensor categories}
\label{subsec:Nakayama-fun}

\newcommand{\DD}{\mathbb{D}}

Let $\mathcal{C}$ be a tensor category, and let $\Nak := \Nak_{\Ind(\mathcal{C})} : \Ind(\mathcal{C}) \to \Ind(\mathcal{C})$ be the Nakayama functor.
As shown in \cite{MR4042867,2021arXiv211008739S}, if $\mathcal{C}$ is Frobenius, then we have $\Nak(X) = \Nak(\unitobj) \otimes X^{\vee\vee}$ for $X \in \mathcal{C}$.
In this subsection, we first prove that the same formula for $\Nak(X)$ holds without the assumption that $\mathcal{C}$ is Frobenius.

We note that the category $\Ind(\mathcal{C})$ has a natural structure of a monoidal category such that the tensor product is linear, exact and cocontinuous.
Let $\DD$ denote the endofunctor on $\Ind(\mathcal{C})$ induced by the double dual functor $(-)^{\vee\vee}$ on $\mathcal{C}$. In other words, $\DD$ is the functor defined by $\DD(M) = \varinjlim_{\lambda \in \Lambda} M_{\lambda}^{\vee\vee}$ for an object $M \in \Ind(\mathcal{C})$ expressed as $M = \varinjlim_{\lambda \in \Lambda} M_{\lambda}$ for some filtered system $\{ M_{\lambda} \}_{\lambda \in \Lambda}$ of objects of $\mathcal{C}$.
Since the functor $(-)^{\vee\vee}$ is an autoequivalence of $\mathcal{C}$, the functor $\DD$ is an autoequivalence of $\Ind(\mathcal{C})$.
We now prove:

\begin{lemma}
  \label{lem:Nakayama-exactness}
  There are natural isomorphisms
  \begin{equation*}
    \Nak(V) \otimes \DD(W)
    \cong \Nak(V \otimes W) \cong
    \DD^{-1}(V) \otimes \Nak(W)
    \quad (V, W \in \Ind(\mathcal{C})).
  \end{equation*}
  In particular, we have $\Nak(X) \cong \Nak(\unitobj) \otimes \DD(X)$ for $X \in \Ind(\mathcal{C})$ and therefore $\Nak$ is exact.
\end{lemma}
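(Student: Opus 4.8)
The plan is to deduce both isomorphisms from the double-adjoint formula of Lemma~\ref{lem:Nakayama-double-adj}, applied to the colimit-preserving endofunctors of $\Ind(\mathcal{C})$ obtained by tensoring with a fixed object of $\mathcal{C}$ on one side. The key preliminary observation is that such functors sit inside a two-sided chain of adjunctions coming from rigidity: for $X \in \mathcal{C}$, the object $X$ and all of its iterated left and right duals lie in $\mathcal{C} \subset \Ind(\mathcal{C})$ and remain dualizable there (as $\mathcal{C} \hookrightarrow \Ind(\mathcal{C})$ is monoidal), so the standard construction with evaluation and coevaluation morphisms shows that in $\Ind(\mathcal{C})$ the functor $(-) \otimes X$ has left adjoint $(-) \otimes {}^{\vee}\!X$ and hence double left adjoint $(-) \otimes {}^{\vee\vee}\!X$ (where ${}^{\vee\vee}\!X := {}^{\vee}({}^{\vee}\!X)$), and symmetrically $X \otimes (-)$ has left adjoint $X^{\vee} \otimes (-)$ and double left adjoint $X^{\vee\vee} \otimes (-)$. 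All of these exist and preserve colimits because the tensor product of $\Ind(\mathcal{C})$ is exact and cocontinuous in each variable.

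For the left-hand isomorphism I would fix $W \in \mathcal{C}$ and apply Lemma~\ref{lem:Nakayama-double-adj} to $F := (-) \otimes W^{\vee\vee}$. By the observation above, $F^{\lladj} \cong (-) \otimes {}^{\vee\vee}(W^{\vee\vee})$, and since $(-)^{\vee}$ and ${}^{\vee}(-)$ are mutually quasi-inverse contravariant equivalences, ${}^{\vee\vee}(W^{\vee\vee}) \cong W$; hence $F^{\lladj} \cong (-) \otimes W$. Lemma~\ref{lem:Nakayama-double-adj} then produces a natural isomorphism $\Nak(V) \otimes W^{\vee\vee} \cong \Nak(V \otimes W)$ in $V \in \Ind(\mathcal{C})$, which is the desired isomorphism for $W \in \mathcal{C}$ since $\DD(W) = W^{\vee\vee}$ there. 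To pass to an arbitrary $W \in \Ind(\mathcal{C})$, I would note that $W \mapsto \Nak(V) \otimes \DD(W)$ and $W \mapsto \Nak(V \otimes W)$ both preserve colimits (using that $\Nak$, $\DD$, and $\otimes$ do) and agree naturally on $\mathcal{C}$, which generates $\Ind(\mathcal{C})$ under filtered colimits, so they agree on all of $\Ind(\mathcal{C})$.

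The right-hand isomorphism is obtained symmetrically: for fixed $V \in \mathcal{C}$, apply Lemma~\ref{lem:Nakayama-double-adj} to $F := {}^{\vee\vee}\!V \otimes (-)$, whose double left adjoint is $({}^{\vee\vee}\!V)^{\vee\vee} \otimes (-) \cong V \otimes (-)$; since $\DD^{-1}$ is induced by the double right dual functor ${}^{\vee\vee}(-)$, this yields a natural isomorphism $\DD^{-1}(V) \otimes \Nak(W) \cong \Nak(V \otimes W)$, which one extends from $V \in \mathcal{C}$ to $V \in \Ind(\mathcal{C})$ exactly as before. Finally, taking $V = \unitobj$ in the left-hand isomorphism gives $\Nak(X) \cong \Nak(\unitobj) \otimes \DD(X)$ for all $X \in \Ind(\mathcal{C})$; since $\DD$ is an autoequivalence of $\Ind(\mathcal{C})$ (hence exact) and the tensor product of $\Ind(\mathcal{C})$ is exact in each variable, the composite $\Nak \cong (\Nak(\unitobj) \otimes (-)) \circ \DD$ is exact.

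The real content of the argument is carried by Lemma~\ref{lem:Nakayama-double-adj}; the remaining steps are essentially formal. I expect the only points requiring genuine care to be the bookkeeping of left versus right duals in the two adjunction chains (together with the resulting identification of $\DD$ and $\DD^{-1}$ with the double left and right dual functors), and the routine density-and-cocontinuity argument used to upgrade the two isomorphisms from $\mathcal{C}$ to $\Ind(\mathcal{C})$ with naturality in both variables.
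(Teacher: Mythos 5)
Your proposal is correct and follows essentially the same route as the paper: both apply Lemma~\ref{lem:Nakayama-double-adj} to the one-sided tensoring functors (computing the double left adjoint via rigidity), first for an object of $\mathcal{C}$ and then extending to $\Ind(\mathcal{C})$ by cocontinuity of $\Nak$, $\DD$ and $\otimes$. The only difference is that you spell out the symmetric argument for the second isomorphism and the dual-bookkeeping identifying $\DD^{-1}$ with the double right dual, which the paper leaves as "similar."
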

\begin{proof}
  By applying Lemma~\ref{lem:Nakayama-double-adj} to $F = (-) \otimes X^{\vee\vee}$, we have the following natural isomorphism:
  \begin{equation*}
    \Nak(V \otimes X) \cong \Nak(V) \otimes X^{\vee\vee}
    \quad (V \in \Ind(\mathcal{C}), X \in \mathcal{C}).
  \end{equation*}
  Hence, if $W$ is an object of $\Ind(\mathcal{C})$ such that $W = \varinjlim_{\lambda \in \Lambda} W_{\lambda}$ for some $W_{\lambda} \in \mathcal{C}$, then we have
  \begin{equation*}
    \Nak(V \otimes W)
    \cong \varinjlim_{\lambda \in \Lambda} \Nak(V \otimes W_{\lambda})
    \cong \varinjlim_{\lambda \in \Lambda} \Nak(V) \otimes W_{\lambda}^{\vee\vee}
    \cong \Nak(V) \otimes \DD(W),
  \end{equation*}
  where we have used the fact that $\Nak$ and $\otimes$ preserve colimits.
  Thus we have established the first isomorphism of the statement of this lemma.
  The second one is given in a similar way.
\end{proof}

It is known that a Hopf algebra $H$ is co-Frobenius if and only if $H^{*\rat}_{\ell} \ne 0$ \cite[Corollary 5.2.4]{MR1786197}.
Lemma \ref{lem:Nakayama-and-rat-dual} says that $H^{*\rat}_{\ell} \ne 0$ is equivalent to that the Nakayama functor $\Nak_{H}$ is non-zero.
We extend this observation to tensor categories as follows:

\begin{lemma}
  \label{lem:Frobenius-criterions}
  For a tensor category $\mathcal{C}$, the following are equivalent:
  \begin{enumerate}
  \item $\mathcal{C}$ is Frobenius.
  \item $\Nak_{\Ind(\mathcal{C})}$ is an equivalence.
  \item $\Nak_{\Ind(\mathcal{C})}$ is a non-zero functor.
  \item There are an injective object $E \in \Ind(\mathcal{C})$ and an object $V \in \mathcal{C}$ such that
    \begin{equation*}
      \Hom_{\Ind(\mathcal{C})}(E, V) \ne 0.
    \end{equation*}
  \end{enumerate}
\end{lemma}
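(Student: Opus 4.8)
The plan is to establish the cycle $(1)\Rightarrow(2)\Rightarrow(3)\Rightarrow(4)\Rightarrow(1)$, using the structural formula $\Nak(X)\cong\Nak(\unitobj)\otimes\DD(X)$ from Lemma~\ref{lem:Nakayama-exactness} as the main engine. The implications $(1)\Leftrightarrow(2)$ and $(1)\Rightarrow(3)$ are essentially already recorded: by Lemma~\ref{lem:Frobenius-criterions-0}, $\mathcal{C}$ is Frobenius iff $\mathcal{C}\approx\Mod^Q_{\fd}$ for a QcF coalgebra $Q$, and by Lemma~\ref{lem:Nakayama-basics}(3) this is equivalent to $\Nak_C$ (hence $\Nak_{\Ind(\mathcal{C})}$) being an equivalence, which in particular is non-zero since $\mathcal{C}\ne 0$. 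So the real content is $(3)\Rightarrow(4)$ and $(4)\Rightarrow(1)$.

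For $(3)\Rightarrow(4)$: suppose $\Nak:=\Nak_{\Ind(\mathcal{C})}$ is non-zero. First I would argue there is an object $V\in\mathcal{C}$ with $\Nak(V)\ne 0$; indeed every object of $\Ind(\mathcal{C})$ is a filtered colimit of objects of $\mathcal{C}$ and $\Nak$ preserves colimits (Lemma~\ref{lem:Nakayama-basics}(1)), so if $\Nak$ killed all of $\mathcal{C}$ it would be the zero functor. Next, $\Nak$ has a right adjoint $\Nak^{\radj}$ by Lemma~\ref{lem:Nakayama-basics}(1); the counit or a standard colimit argument shows $\Nak^{\radj}$ is non-zero as well, and in fact I want to produce an injective object $E$ that is ``seen'' by $\Nak$. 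The cleanest route: realize $\mathcal{C}\approx\Mod^C_{\fd}$, so $\Ind(\mathcal{C})\approx\Mod^C$, and recall from Lemma~\ref{lem:Nakayama-dual} that $\Nak_C(V)^*\cong\Hom^C(V,C^{*\rat}_\ell)$; non-vanishing of $\Nak_C(V)$ for some finite-dimensional $V$ therefore forces $C^{*\rat}_\ell\ne 0$. Since $C^{*\rat}_\ell$ is a right $C$-comodule, it contains a non-zero finite-dimensional subcomodule $V$, and the injective hull $E$ of $C^{*\rat}_\ell$ (or of that subcomodule) inside $\Mod^C$ is injective with $\Hom^C(E,V)\supseteq\Hom^C(V,V)\ne 0$ after identifying $V\hookrightarrow E$ — more carefully, I take $E$ injective containing $V$ as a subcomodule and use $\Hom_{\Ind(\mathcal{C})}(E,V)\ne0$ because, $E$ being injective, the identity $V\to V$ — wait, that goes the wrong way; instead one uses that $C^{*\rat}_\ell$ itself is injective precisely when $\Nak_C$ is exact, which we are \emph{not} assuming, so the honest statement is: pick $E=$ the injective hull of a simple subobject $S$ of $C^{*\rat}_\ell$; then $E$ is a direct summand of $C$ by \cite[Theorem 2.4.16]{MR1786197}, and the composite $E\twoheadrightarrow E/(\text{something})$... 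The cleaner assertion to aim for is simply: $\Hom_{\Ind(\mathcal{C})}(C, V)\ne 0$ for the finite-dimensional subcomodule $V\subseteq C^{*\rat}_\ell$, because $C$ is an injective cogenerator of $\Mod^C$, so taking $E=C$ (injective in $\Ind(\mathcal{C})$) and this $V$ gives $(4)$.

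For $(4)\Rightarrow(1)$: given injective $E\in\Ind(\mathcal{C})$ and $V\in\mathcal{C}$ with $\Hom_{\Ind(\mathcal{C})}(E,V)\ne 0$, I invoke the criterion of Lemma~\ref{lem:Nakayama-QcF-2}, which applies because $\Nak_{\Ind(\mathcal{C}^{\op})}$ is exact by Lemma~\ref{lem:Nakayama-exactness} applied to the tensor category $\mathcal{C}^{\op}$ (the opposite of a tensor category is again a tensor category, and Lemma~\ref{lem:Nakayama-exactness} gives exactness of its Nakayama functor unconditionally). To verify condition (2) of Lemma~\ref{lem:Nakayama-QcF-2}, I must show every simple object $S\in\mathcal{C}$ is an epimorphic image of an injective object of $\Ind(\mathcal{C})$. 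The key is that $\Hom_{\Ind(\mathcal{C})}(E,V)\ne0$ propagates: tensoring a non-zero map $E\to V$ with dual objects and using rigidity of $\mathcal{C}$, together with the fact that $E\otimes W$ is still injective for any $W\in\mathcal{C}$ with a dual (since $(-)\otimes W$ has exact left adjoint $(-)\otimes{}^{\vee}W$), I can produce, for any prescribed simple $S$, a non-zero morphism from an injective object into an object containing $S$; composing with a quotient onto $S$ and using that a non-zero map into a uniform/finite-length object can be post-composed to hit a chosen simple quotient after possibly replacing $E$ by a summand, yields the required epimorphism onto $S$. I expect \textbf{this last step — manufacturing, from a single non-zero $\Hom(E,V)$, an epimorphism onto \emph{every} simple object — to be the main obstacle}; the resolution should be the standard tensor-category trick that $V$ (being non-zero in a tensor category) is a ``generator up to tensoring'' — precisely, every simple $S$ is a subquotient of $V^{\vee}\otimes V\otimes S$ or of $V\otimes V^{\vee}\otimes$ something, so the non-zero map $E\to V$ tensored appropriately and composed with projections/inclusions reaches $S$, while injectivity of the source is preserved throughout by the exact-left-adjoint argument. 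Once condition (2) of Lemma~\ref{lem:Nakayama-QcF-2} is verified, $Q$ is QcF and $\mathcal{C}$ is Frobenius by Lemma~\ref{lem:Frobenius-criterions-0}, closing the cycle.
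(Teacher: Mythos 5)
Your overall architecture $(1)\Rightarrow(2)\Rightarrow(3)\Rightarrow(4)\Rightarrow(1)$ and your treatment of $(1)\Leftrightarrow(2)$, $(2)\Rightarrow(3)$ and $(4)\Rightarrow(1)$ match the paper's proof. In particular, for $(4)\Rightarrow(1)$ you correctly reduce to Lemma~\ref{lem:Nakayama-QcF-2} via the exactness of $\Nak_{\Ind(\mathcal{C}^{\op})}$ supplied by Lemma~\ref{lem:Nakayama-exactness}, and the ``tensor-category trick'' you anticipate as the main obstacle is exactly what the paper does: replace $V$ by $\Img(\pi)$ so that $\pi: E \to V$ is epic, note that $X \otimes E$ is injective for $X \in \mathcal{C}$ since $\Hom(-, X \otimes E) \cong \Hom(X^{\vee} \otimes (-), E)$, and then
$S \otimes V^{\vee} \otimes E \xrightarrow{\ \id \otimes \id \otimes \pi\ } S \otimes V^{\vee} \otimes V \xrightarrow{\ \id_S \otimes \eval_V\ } S$
is an epimorphism from an injective object (the evaluation is epic because $V \ne 0$ and $\unitobj$ is simple). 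So that half is sound, if under-detailed.

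The genuine gap is in $(3)\Rightarrow(4)$. Your final version takes $E = C$ and $V$ a non-zero finite-dimensional subcomodule of $C^{*\rat}_{\ell}$, and asserts $\Hom^C(C, V) \ne 0$ ``because $C$ is an injective cogenerator of $\Mod^C$''. That justification points the wrong way: the cogenerator property produces non-zero maps \emph{into} powers of $C$ from any non-zero comodule, not non-zero maps \emph{out of} $C$. A non-zero map out of $C$ onto a finite-dimensional comodule is the same datum as a proper subcomodule of $C$ of finite codimension, and there is no reason the particular comodule $V \subseteq C^{*\rat}_{\ell}$ should receive such a map (dualizing, $\Hom^C(C,V) \cong {}^C\Hom(V^*, C^{*\rat}_{r})$, whose non-vanishing is not automatic). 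The fix is the one the paper uses and which you were circling around before discarding it: pick $0 \ne f \in C^{*\rat}_{\ell}$; by the characterization of the rational part, $\Ker(f)$ contains a right $C$-subcomodule $K \subseteq C$ of finite codimension, and $K \ne C$ since $f \ne 0$; now take $E = C$ and $V = C/K$, with the quotient map $C \to C/K$ as the required non-zero morphism. With that substitution the proof is complete.
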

\begin{proof}
  We fix a coalgebra $Q$ such that $\mathcal{C} \approx \Mod^Q_{\fd}$ as linear categories.
  If (1) holds, then $Q$ is QcF by Lemma~\ref{lem:Frobenius-criterions-0}, and therefore (2) holds by Lemma~\ref{lem:Nakayama-basics}.
  The implication (2) $\Rightarrow$ (3) is trivial.
  Below we prove (3) $\Rightarrow$ (4) and (4) $\Rightarrow$ (1).

  \medskip
  \underline{(3) $\Rightarrow$ (4)}. Suppose that (3) holds.
  Then, by Lemma~\ref{lem:Nakayama-and-rat-dual}, we have $Q^{*\rat}_{\ell} \ne 0$.
  We pick a non-zero element $f$ of $Q^{*\rat}_{\ell}$.
  According to a characterization of the rational part \cite[Corollary 2.2.16]{MR1786197}, the kernel of $f$ contains a right $Q$-subcomodule $K \subset Q$ of finite codimension. Since $f \ne 0$, we have $Q/K \ne 0$. Now let $E$ and $V$ be objects of $\Ind(\mathcal{C})$ corresponding to $Q$ and $Q/K$, respectively, through the equivalence $\Ind(\mathcal{C}) \approx \Mod^Q$. Then $E$ is injective, $V$ is finite and there is a non-zero morphism from $E$ to $V$ corresponding to the quotient map $Q \to Q/K$.

  \medskip
  \underline{(4) $\Rightarrow$ (1)}.
  We note that $\mathcal{C}^{\op}$ has a natural structure of a tensor category. Thus, by Lemma~\ref{lem:Nakayama-exactness}, the functor $\Nak_{\Ind(\mathcal{C}^{\op})}$ is exact.
  In view of Lemma~\ref{lem:Nakayama-QcF-2}, it is enough to show that every simple object of $\mathcal{C}$ is an epimorphic image of an injective object of $\Ind(\mathcal{C})$.

  Now we suppose that (4) holds and pick an injective object $E \in \Ind(\mathcal{C})$ and an object of $V \in \mathcal{C}$ such that there is a non-zero morphism $\pi: E \to V$ in $\Ind(\mathcal{C})$. By replacing $V$ with the image of $\pi$, we may assume that $\pi$ is an epimorphism. We note that $X \otimes E$ is injective for all $X \in \mathcal{C}$, since
  \begin{equation*}
    \Hom_{\mathcal{C}}(-, X \otimes E)
    \cong \Hom_{\mathcal{C}}(X^{\vee} \otimes (-), E)
  \end{equation*}
  as functors from $\Ind(\mathcal{C})$ to $\Vect$. Thus every simple object $S \in \mathcal{C}$ is an epimorphic image of an injective object of $\Ind(\mathcal{C})$ as follows:
  \begin{equation*}
    S \otimes V^{\vee} \otimes E
    \xrightarrow{\quad \id_S \otimes \id_{V^{\vee}} \otimes \pi \quad} S \otimes V^{\vee} \otimes V
    \xrightarrow{\quad \id_S \otimes \eval_V \quad} S.
  \end{equation*}
  The proof is done.
\end{proof}

\subsection{Exact sequence of Frobenius tensor categories}
Let $H$, $K$ and $H'$ be Hopf algebras with bijective antipodes, and let
\begin{equation}
  \label{eq:exact-seq-Hopf}
  H' \xrightarrow{\quad i \quad} H \xrightarrow{\quad \phi \quad} K
\end{equation}
be a sequence of maps of Hopf algebras. It is known that the sequence \eqref{eq:exact-seq-Hopf} is exact if and only if the induced sequence
\begin{equation*}
  \Mod^{H'}_{\fd} \xrightarrow{\quad i_{*} \quad}
  \Mod^H_{\fd} \xrightarrow{\quad \phi_{*} \quad} \Mod^{K}_{\fd}
\end{equation*}
of tensor functors is an exact sequence of tensor categories. Andruskiewitsch and Cuadra \cite[Theorem 2.10]{MR3032811} showed that, provided that $H$ is faithfully coflat as a right $K$-comodule, $H$ is co-Frobenius if and only if both $H'$ and $K$ are.
Natale \cite[Question 6.12]{MR4281372} asked if the same holds for tensor categories.
The assumption that $H$ is faithfully coflat as a right $K$-comodule is, by definition, equivalent to that the functor $\phi_{*}^{\radj} = H \mathop{\square}\nolimits_K (-) : {}^K\Mod \to {}^H\Mod$ is exact and faithful.
In view of this observation, we give an affirmative answer to Natale's question in the following form:

\begin{theorem}
  \label{thm:Frobenius-closed-under-exact-seq}
  Let $\mathcal{C}' \xrightarrow{\quad \iota \quad} \mathcal{C} \xrightarrow{\quad F \quad} \mathcal{D}$ be an exact sequence of tensor categories, and let $G$ be an ind-adjoint of $F$. Suppose that $G$ is exact. Then the following are equivalent:
  \begin{enumerate}
  \item $\mathcal{C}$ is Frobenius.
  \item Both $\mathcal{C}'$ and $\mathcal{D}$ are Frobenius.
  \end{enumerate}
\end{theorem}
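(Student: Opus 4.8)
The plan is to prove the equivalence by establishing the two implications separately, using the characterizations of Frobeniusness assembled in Lemmas \ref{lem:Frobenius-criterions-0} and \ref{lem:Frobenius-criterions}, together with Theorem \ref{thm:Frobenius-and-tensor-functors}. First I would dispose of the direction $(2) \Rightarrow (1)$, which should be the more delicate half. Assume $\mathcal{C}'$ and $\mathcal{D}$ are Frobenius; I want to produce an injective object $E \in \Ind(\mathcal{C})$ and an object $V \in \mathcal{C}$ with $\Hom_{\Ind(\mathcal{C})}(E, V) \ne 0$, so that Lemma \ref{lem:Frobenius-criterions} applies. The natural candidate for $E$ is built from an injective of $\Ind(\mathcal{D})$: since $\mathcal{D}$ is Frobenius there is a non-zero injective $E_{\mathcal{D}} \in \mathcal{D}$, and because $G$ is exact (and left exact as a right adjoint) the object $G(\iota_{\mathcal{D}}(E_{\mathcal{D}}))$, or more precisely $G$ applied to a suitable injective object of $\Ind(\mathcal{D})$, should be injective in $\Ind(\mathcal{C})$ via the adjunction isomorphism $\Hom_{\Ind(\mathcal{C})}(-, G(-)) \cong \Hom_{\Ind(\mathcal{D})}(F(-), -)$ and exactness of $F$. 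Then I must show $G$ of such an injective is non-zero and admits a non-zero morphism into some finite-length object of $\mathcal{C}$; here the dominance of $F$ (equivalently, faithfulness of $G$, by \cite[Lemma 3.1]{MR2863377}) guarantees $G \ne 0$, and the Frobeniusness of $\mathcal{C}'$ must be fed in to control the ``trivial part'' of the image — roughly, $\Ker(F) = \mathcal{C}'$ being Frobenius supplies the needed finite-length injective target. The hard part will be assembling $E$ and $V$ so that $\Hom_{\Ind(\mathcal{C})}(E,V) \ne 0$ genuinely uses both hypotheses and not just one; I expect to go through $\Hom_{\Ind(\mathcal{C})}(E, G(\unitobj))$-type computations, invoking Lemma \ref{lem:normality-BN11-Prop-3-5} (normality) to see that $G(\unitobj)$ has finite-length subobjects lying in $\mathcal{C}' = \Ker(F)$, which are injective in $\Ind(\mathcal{C}')$ and — using that $\iota$ is a tensor full subcategory embedding together with the exactness of $G$ — can be promoted to the required $V \in \mathcal{C}$.

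For the direction $(1) \Rightarrow (2)$, I would argue as the introduction indicates. First, $\mathcal{C}$ Frobenius implies $\mathcal{D}$ Frobenius: this is precisely Theorem \ref{thm:Frobenius-and-tensor-functors}(a), since $G$ is assumed exact, so no extra work is needed. Second, $\mathcal{C}$ Frobenius implies $\mathcal{C}'$ Frobenius: here I would use the fully faithful tensor functor $\iota : \mathcal{C}' \to \mathcal{C}$ and its ind-adjoint $G'$. Since $\mathcal{C}' = \Ker(F)$ sits inside $\mathcal{C}$ as a tensor full subcategory, the point is to show $\mathcal{C}'$ has a non-zero injective (or projective) object, invoking Lemma \ref{lem:Frobenius-criterions-0}. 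The plan is to adapt the classical argument of Sullivan \cite{MR304418} (as flagged in the organization section): take a non-zero projective $P \in \mathcal{C}$ and analyze how $\Ker(F)$ interacts with $P$, or dually work with an injective hull in $\Ind(\mathcal{C})$ and restrict along $\iota$. Concretely, I would look at $G'(\unitobj)$ or at the image of an injective object of $\Ind(\mathcal{C})$ under the right adjoint of $\iota$, and use that $F$ being normal forces these ``kernel-supported'' pieces to be large enough.

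Technically, the cleanest packaging may be: for $(1) \Rightarrow (2)$, get $\mathcal{D}$ Frobenius from Theorem \ref{thm:Frobenius-and-tensor-functors}(a), and get $\mathcal{C}'$ Frobenius by producing a non-zero injective in $\Ind(\mathcal{C}')$ of finite length. Let $E \in \mathcal{C}$ be a non-zero injective (Lemma \ref{lem:Frobenius-criterions-0}); the socle of $E$ contains a simple $S$, and I claim that by tensoring with duals and using normality one can arrange a simple object of $\mathcal{C}'$ to appear as a subobject of (the restriction to $\Ind(\mathcal{C}')$ of) an injective of $\Ind(\mathcal{C})$ that actually lands in $\mathcal{C}'$ — this is exactly the Sullivan-type maneuver, where the exactness of $G$ and normality of $F$ (via Lemma \ref{lem:normality-BN11-Prop-3-5}) are combined to keep everything inside $\Ker(F)$. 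The main obstacle I anticipate is precisely this last point: ensuring that the injective object of $\Ind(\mathcal{C})$ one constructs, when pulled back into $\mathcal{C}'$, remains injective \emph{in $\Ind(\mathcal{C}')$} and is non-zero — a priori restriction of injectives along a fully faithful embedding need not preserve injectivity, so one has to use the specific structure (the right adjoint to $\iota$ exists and is exact because $G$ is, and $\mathcal{C}'$ being a ``normal'' kernel means the relevant comodule-algebra is well-behaved) to make the reduction work. Once both halves are in hand, the theorem follows by combining them.
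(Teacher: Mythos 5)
Your overall architecture matches the paper's: $(1)\Rightarrow(2)$ for $\mathcal{D}$ is indeed just Theorem \ref{thm:Frobenius-and-tensor-functors}(a); $\mathcal{C}'$ Frobenius is obtained by a Sullivan-type argument; and $(2)\Rightarrow(1)$ is proved by producing an injective $E \in \Ind(\mathcal{C})$ and a finite $V$ with $\Hom_{\Ind(\mathcal{C})}(E,V) \ne 0$ and invoking Lemma \ref{lem:Frobenius-criterions}. However, the step you yourself flag as ``the hard part'' of $(2)\Rightarrow(1)$ is left unresolved, and the mechanism you propose for it is not correct as stated. You say that the finite-length subobjects of $G(\unitobj)$ lying in $\mathcal{C}' = \Ker(F)$ ``are injective in $\Ind(\mathcal{C}')$'' --- but normality (Lemma \ref{lem:normality-BN11-Prop-3-5}) only gives that these subobjects lie in $\mathcal{C}'$, i.e.\ that $G(\unitobj) \in \Ind(\mathcal{C}')$; it says nothing about their injectivity, and a finite subobject of $G(\unitobj)$ need not be injective. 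The missing idea is that $G(\unitobj)$ \emph{itself} is injective as an object of $\Ind(\mathcal{C}')$: for a short exact sequence in $\mathcal{C}'$, the functor $F$ produces a short exact sequence of \emph{trivial} objects of $\mathcal{D}$, which therefore splits, so $\Hom_{\mathcal{D}}(F(-),\unitobj) \cong \Hom_{\Ind(\mathcal{C})}(-,G(\unitobj))$ is exact on $\mathcal{C}'$. Only then does the Frobeniusness of $\mathcal{C}'$ enter: it forces the indecomposable injective summands of $G(\unitobj)$ in $\Ind(\mathcal{C}')$ to be finite, yielding an epimorphism $\pi' : G(\unitobj) \to V$ with $V \in \mathcal{C}'$. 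Finally, to get the injective source $E$, one should take $\pi : P \to \unitobj$ a projective cover in $\mathcal{D}$ (which is injective since $\mathcal{D}$ is Frobenius), set $E = G(P)$ (injective by adjunction and exactness of $F$), and use the exactness of $G$ to see that $\pi' \circ G(\pi) : E \to V$ is a non-zero morphism. Your sketch names most of these ingredients but does not assemble them, and the assembly is where the content lies.

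On $(1)\Rightarrow(2)$ for $\mathcal{C}'$, your route via the right adjoint of $\iota$ and normality is more complicated than necessary and is not what the paper does: the paper takes an injective hull $E_0$ of $\unitobj$ in $\Ind(\mathcal{C}')$ and an injective hull $E$ of $E_0$ in $\Ind(\mathcal{C})$; the composite $\unitobj \hookrightarrow E_0 \hookrightarrow E$ is a chain of essential extensions (using that $\Ind(\mathcal{C}')$ is closed under subobjects in $\Ind(\mathcal{C})$), so $E$ is an injective hull of $\unitobj$ in $\Ind(\mathcal{C})$, hence finite because $\mathcal{C}$ is Frobenius, hence $E_0 \in \Ind(\mathcal{C}') \cap \mathcal{C} = \mathcal{C}'$ is a non-zero injective of $\Ind(\mathcal{C}')$ of finite length. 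No appeal to normality, dominance, or the exactness of $G$ is needed for this half. Your worry that ``restriction of injectives along a fully faithful embedding need not preserve injectivity'' is legitimate but is sidestepped entirely by the injective-hull argument; if you insist on going through the right adjoint of $\iota$, you would need to justify that it preserves injectives (which holds because $\iota$ is exact) and that its value on a suitable injective is non-zero and finite --- doable, but you have not done it.
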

\begin{proof}
  For simplicity, we assume that $\mathcal{C}'$ is a full subcategory of $\mathcal{C}$ through the tensor functor $\iota$.
  Then $\Ind(\mathcal{C}')$ can be regarded as the full subcategory of $\Ind(\mathcal{C})$ consisting of all objects of $\Ind(\mathcal{C})$ written as a filtered colimit of objects of $\mathcal{C}'$.

  \medskip
  \underline{(1) $\Rightarrow$ (2)}.
  We suppose that $\mathcal{C}$ is Frobenius.
  By Theorem~\ref{thm:Frobenius-and-tensor-functors}, $\mathcal{D}$ is Frobenius.
  We show that $\mathcal{C}'$ is Frobenius by an argument inspired by Sullivan's \cite[Theorem 2.15]{MR304418}.
  Let $E_0$ be an injective hull of the unit object $\unitobj$ in $\Ind(\mathcal{C}')$, and let $E$ be an injective hull of $E_0$ in $\Ind(\mathcal{C})$.
  Then we have a sequence $\unitobj \hookrightarrow E_0 \hookrightarrow E$ of essential extensions in $\Ind(\mathcal{C})$.
  Thus $E$ is an injective hull of $\unitobj$ in $\Ind(\mathcal{C})$.
  Since an injective hull of a simple object of $\mathcal{C}$ is an object of $\mathcal{C}$ by the definition of a Frobenius tensor category, $E$ is in fact an object of $\mathcal{C}$.
  Thus $E_0$ belongs to $\mathcal{C}$ as a subobject of $E$.
  In conclusion, we have $E_0 \in \Ind(\mathcal{C}') \cap \mathcal{C} = \mathcal{C}'$.
  Thus, by Lemma~\ref{lem:Frobenius-criterions-0}, $\mathcal{C}'$ is Frobenius.

  \medskip
  \underline{(2) $\Rightarrow$ (1)}.
  We assume that both $\mathcal{C}'$ and $\mathcal{D}$ are Frobenius.
  By Lemma~\ref{lem:normality-BN11-Prop-3-5}, every subobject of $G(\unitobj)$ of finite length belongs to $\Ker(F) = \mathcal{C}'$.
  In other words, we have $G(\unitobj) \in \Ind(\mathcal{C}')$.
  The object $G(\unitobj)$ is actually an injective object of $\Ind(\mathcal{C}')$. Indeed, let $0 \to X \to Y \to Z \to 0$ be an exact sequence in $\mathcal{C}'$. Since $F$ is exact, we have an exact sequence $0 \to F(X) \to F(Y) \to F(Z) \to 0$ in $\mathcal{D}$.
  This sequence splits since each term is a trivial object of $\mathcal{D}$.
  Thus we obtain the split exact sequence
  \begin{equation*}
    0 \to \Hom_{\mathcal{D}}(F(Z), \unitobj) \to \Hom_{\mathcal{D}}(F(Y), \unitobj) \to \Hom_{\mathcal{D}}(F(X), \unitobj) \to 0,
  \end{equation*}
  which is isomorphic to the sequence
  \begin{equation*}
    0 \to \Hom_{\Ind(\mathcal{C})}(Z, G(\unitobj)) \to \Hom_{\Ind(\mathcal{C})}(Y, G(\unitobj)) \to \Hom_{\Ind(\mathcal{C})}(X, G(\unitobj)) \to 0.
  \end{equation*}
  Therefore the contravariant functor $\Hom_{\Ind(\mathcal{C})}(-, G(\unitobj)) : \Ind(\mathcal{C}) \to \Vect$ preserves exact sequences in $\mathcal{C}'$.
  We conclude that $G(\unitobj)$ is an injective object of $\Ind(\mathcal{C}')$ by the following fact: A right comodule $E$ over a coalgebra $Q$ is injective if and only if $\Hom^Q(-, E)$ preserves exact sequences of finite-dimensional right $Q$-comodules \cite[Theorem 2.4.17]{MR1786197}.

  We have $G(\unitobj) \ne 0$ since $\Hom_{\Ind(\mathcal{C})}(\unitobj, G(\unitobj)) \cong \Hom_{\mathcal{D}}(\unitobj, \unitobj) \ne 0$.
  We decompose the object $G(\unitobj)$ into a direct sum of indecomposable injective objects of $\Ind(\mathcal{C}')$, as $G(\unitobj) = \bigoplus_{i \in I} E_i$.
  By the assumption that $\mathcal{C}'$ is Frobenius, each $E_i$ belongs to $\mathcal{C}'$.
  Thus, in particular, there are an object $V \in \mathcal{C}'$ and an epimorphism $\pi': G(\unitobj) \to V$ in $\Ind(\mathcal{C}')$.

  By the assumption that $\mathcal{D}$ is Frobenius, there exists a projective cover $\pi: P \to \unitobj$ of $\unitobj$ in $\mathcal{D}$. Furthermore, $P$ is an injective hull of a simple object of $\mathcal{D}$. Since there is an isomorphism $\Hom_{\Ind(\mathcal{C})}(-, G(P)) \cong \Hom_{\mathcal{D}}(F(-), P)$ of functors from $\mathcal{C}$ to $\Vect$, and since $F$ is exact, $E := G(P)$ is an injective object of $\Ind(\mathcal{C})$.
  Since $G$ is assumed to be exact, $G(\pi) : G(P) \to G(\unitobj)$ is an epimorphism in $\Ind(\mathcal{C})$.
  Now we have an epimorphism $\pi' \circ G(\pi) : E \to V$ in $\Ind(\mathcal{C})$ with $V \in \mathcal{C}$ and $E \in \Ind(\mathcal{C})$ injective. By Lemma \ref{lem:Frobenius-criterions}, $\mathcal{C}$ is Frobenius. The proof is completed.
\end{proof}

\section{Applications}
\label{sec:applications}

\subsection{Comodules over braided Hopf algebras}
\label{subsec:comod-over-hopf}

Let $\mathcal{C}$ be a tensor category, and let $H$ be a coalgebra in $\Ind(\mathcal{C})$.
We denote by $\Ind(\mathcal{C})^H$ and $\mathcal{C}^H$ the category of right $H$-comodules in $\Ind(\mathcal{C})$ and its full subcategory consisting of all comodules whose underlying object is finite.
According to Lyubashenko \cite[Section 2.3]{MR1625495}, we may identify $\Ind(\mathcal{C})^H$ with the ind-completion of $\mathcal{C}^H$ (see Appendix \ref{sec:fundamental-theorem}).

Suppose that $\mathcal{C}$ is semisimple and braided, and let $\{ V_{\lambda} \}_{\lambda \in \Lambda}$ be a complete set of representatives of isomorphism classes of simple objects of $\mathcal{C}$. For simplicity of notation, we assume that there is a special index $0 \in \Lambda$ such that $V_0 = \unitobj$.
As $\mathcal{C}$ is assumed to have a braiding, the notion of a Hopf algebra in $\Ind(\mathcal{C})$ makes sense.
Let $H$ be a Hopf algebra in $\Ind(\mathcal{C})$ with comultiplication $\Delta$, counit $\varepsilon$ and antipode $S$.
By the semisimplicity of $\mathcal{C}$, $H$ is decomposed as $H = \bigoplus_{\lambda \in \Lambda} H_{\lambda}$, where $H_{\lambda}$ is the sum of all subobjects of $H$ isomorphic to $V_{\lambda}$.
Let $J$ be the ideal of $H$ generated by $\bigoplus_{\lambda \in \Lambda \setminus \{ 0 \}} H_{\lambda}$. Then $J$ is a Hopf ideal of $H$, and hence the quotient $H / J$ is a Hopf algebra in $\Vect$ (that is, a Hopf algebra in an ordinary sense), where $\Vect$ is viewed as a full subcategory of $\Ind(\mathcal{C})$ via $\bfk \mapsto \unitobj$.

\begin{definition}[{\cite[Definition 2.6]{2019arXiv190911240V}}]
  Given a Hopf algebra $H$ in $\Ind(\mathcal{C})$, we denote by $\overline{H}$ the Hopf algebra $H/J$ mentioned in the above and call it the {\em underlying ordinary Hopf algebra} of $H$.
\end{definition}

In the case where $\mathcal{C}$ is the category of supervector spaces and $H$ is supercommutative, then $\overline{H}$ is the coordinate algebra of the even part of the affine supergroup scheme $\mathrm{Spec}(H)$.
The following theorem is motivated by the result of \cite{MR4375528} on integrals on affine algebraic supergroup schemes (see Example \ref{ex:Masuoka-Shibata-Shimada} for the discussion).

\begin{theorem}
  \label{thm:cointegral}
  Let $H$ be a Hopf algebra in $\Ind(\mathcal{C})$ with invertible antipode.
  Suppose that there is an object $W \in \mathcal{C}$ and an isomorphism $    H \cong \overline{H} \otimes W$ of left $\overline{H}$-comodules in $\Ind(\mathcal{C})$, where $H$ is viewed as a left $\overline{H}$-comodule through the quotient morphism $H \to \overline{H}$. Then the following are equivalent:
  \begin{enumerate}
  \item The underlying Hopf algebra $\overline{H}$ is co-Frobenius.
  \item The category of right $H$-comodules in $\mathcal{C}$ is a Frobenius tensor category.
  \end{enumerate}
\end{theorem}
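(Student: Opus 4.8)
The plan is to produce a tensor functor from $\mathcal{C}^H$ to $\mathcal{C}^{\overline H}$ and apply Theorem~\ref{thm:Frobenius-and-tensor-functors}. The quotient morphism $H \to \overline H$ of Hopf algebras in $\Ind(\mathcal{C})$ induces a functor $F: \Ind(\mathcal{C})^H \to \Ind(\mathcal{C})^{\overline H}$ sending a right $H$-comodule to the right $\overline H$-comodule obtained by pushing forward the coaction. This $F$ is strong monoidal (the tensor product of comodules in a braided setting is compatible with pushing forward along a Hopf algebra map), colimit-preserving and exact. Via the identification $\Ind(\mathcal{C})^{\overline H} = \Ind(\mathcal{C}^{\overline H})$ of the appendix, $F$ restricts to a tensor functor $\mathcal{C}^H \to \mathcal{C}^{\overline H}$ between tensor categories, once we observe that $\mathcal{C}^{\overline H}$ is a tensor category: it is the category of finite $\overline H$-comodules inside $\Ind(\mathcal{C})$, and since $\overline H \in \Vect$ sits inside $\Ind(\mathcal{C})$ via $\bfk \mapsto \unitobj$, one has $\mathcal{C}^{\overline H} \approx \mathcal{C} \boxtimes \Mod^{\overline H}_{\fd}$ (using semisimplicity of $\mathcal{C}$), which is rigid with absolutely simple unit. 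In particular $\mathcal{C}^{\overline H}$ is Frobenius if and only if $\overline H$ is a QcF, equivalently co-Frobenius, Hopf algebra (by Lemmas~\ref{lem:Frobenius-criterions-0} and the fact that a Hopf algebra is co-Frobenius iff QcF); the copower by the multiplicity objects $V_\lambda$ does not affect semiperfectness.

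The right adjoint $G$ of the induced functor $\Ind(\mathcal{C})^H \to \Ind(\mathcal{C})^{\overline H}$ is the cotensor product $H \coten_{\overline H} (-)$, exactly as in the Hopf-algebraic setting recalled before Theorem~\ref{thm:Frobenius-and-tensor-functors}. The key point is that the hypothesis $H \cong \overline H \otimes W$ as left $\overline H$-comodules, for some $W \in \mathcal{C}$, makes $H$ injective as a left $\overline H$-comodule in $\Ind(\mathcal{C})$: indeed $\overline H$ itself is an injective left $\overline H$-comodule in $\Ind(\mathcal{C})$ (cofree comodules are injective), and $\overline H \otimes W$ is a direct sum of copies of $\overline H$ copowered by the simple constituents of $W$, hence again injective. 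Consequently the cotensor functor $H \coten_{\overline H} (-)$ is exact, i.e.\ $G$ is exact.

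Now Theorem~\ref{thm:Frobenius-and-tensor-functors} applies in both directions. If $\overline H$ is co-Frobenius then $\mathcal{C}^{\overline H}$ is Frobenius, and since $F$ is dominant (every $\overline H$-comodule in $\mathcal{C}$ is a quotient of the forgetful-then-cofree comodule coming from $\mathcal{C}^H$, because $H \to \overline H$ is surjective) and $G$ is exact, part (a) of the theorem — or rather the variant showing $\mathcal{C}^H$ Frobenius from $\mathcal{C}^{\overline H}$ Frobenius via $G$ being faithful — gives that $\mathcal{C}^H$ is Frobenius; more directly, a non-zero injective object of $\mathcal{C}^{\overline H}$ is carried by $G$ to a non-zero injective object of $\Ind(\mathcal{C})^H$ because $G$ is exact and right adjoint to an exact functor, and it lands in $\mathcal{C}^H$ thanks to the decomposition $H \cong \overline H \otimes W$ with $W \in \mathcal{C}$, which keeps cotensor products of finite objects finite. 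Conversely, if $\mathcal{C}^H$ is Frobenius, apply Theorem~\ref{thm:Frobenius-and-tensor-functors}(a): $F$ exact with exact ind-adjoint $G$ forces $\mathcal{C}^{\overline H}$ Frobenius, hence $\overline H$ co-Frobenius. I expect the main obstacle to be the bookkeeping needed to justify that $G(\mathcal{C}^{\overline H}) \subset \mathcal{C}^H$ (finiteness of cotensor products) and that $F$ is genuinely a tensor functor between tensor categories — in particular that $\mathcal{C}^{\overline H}$ is a tensor category and that the identification of $\Ind(\mathcal{C})^{\overline H}$ with $\Ind(\mathcal{C}^{\overline H})$ from the appendix is compatible with all the structure — rather than the application of the abstract theorems, which is then immediate.
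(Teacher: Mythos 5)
Your proposal is correct and follows essentially the same route as the paper: the tensor functor $F:\mathcal{C}^H\to\mathcal{C}^{\overline{H}}$ induced by the quotient $H\to\overline{H}$, its ind-adjoint given by cotensoring with $H$ over $\overline{H}$, an application of Theorem~\ref{thm:Frobenius-and-tensor-functors} in both directions, and the reduction $\mathcal{C}^{\overline{H}}\approx(\Mod^{\overline{H}}_{\fd})^{\oplus\Lambda}$ via semisimplicity of $\mathcal{C}$. The only cosmetic difference is that the paper gets both exactness of $G$ and $G(\mathcal{C}^{\overline{H}})\subset\mathcal{C}^H$ in one stroke from the explicit isomorphism $G(M)\cong M\mathbin{\square}_{\overline{H}}(\overline{H}\otimes W)\cong M\otimes W$, whereas you derive exactness from injectivity of $H$ as a left $\overline{H}$-comodule and treat finiteness separately; both arguments flow from the same hypothesis.
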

\begin{proof}
  By the assumption that the antipode of $H$ is invertible, both $\mathcal{C}^H$ and $\mathcal{C}^{\overline{H}}$ are tensor categories. Let $q: H \to \overline{H}$ be the quotient morphism. We consider the functor
  \begin{equation*}
    F: \mathcal{C}^H \to \mathcal{C}^{\overline{H}},
    \quad (M, \delta_M) \mapsto (M, (\id_M \otimes q) \delta_M)
  \end{equation*}
  induced by $q$. The functor $\Ind(F)$ has a right adjoint $G: \Ind(\mathcal{C})^{\overline{H}} \to \Ind(\mathcal{C})^H$ given by the cotensor product $G(M) = M \mathbin{\square}_{\overline{H}} H$ for $M \in \Ind(\mathcal{C})^{\overline{H}}$.
  Since we have
  \begin{equation*}
    G(M) \cong M \mathbin{\square}_{\overline{H}} (\overline{H} \otimes W) \cong M \otimes W
    \quad (M \in \Ind(\mathcal{C}^{\overline{H}}))
  \end{equation*}
  as objects of $\Ind(\mathcal{C})$, we have $G(\mathcal{C}^{\overline{H}}) \subset \mathcal{C}^H$ and that $G$ is exact.
  Now we can apply Theorem~\ref{thm:Frobenius-and-tensor-functors} to show that (2) is equivalent to that the tensor category $\mathcal{C}^{\overline{H}}$ is Frobenius.
  Since $\mathcal{C}$ is semisimple and $\overline{H}$ belongs to $\Vect$, we have $\mathcal{C}^{\overline{H}} \approx (\Mod^{\overline{H}}_{\fd})^{\oplus \Lambda}$ as linear categories.
  Thus the tensor category $\mathcal{C}^{\overline{H}}$ is Frobenius if and only if so is $\Mod^{\overline{H}}_{\fd}$.
  By Lemma~\ref{lem:Frobenius-criterions-0} and \cite[Chapter 5]{MR1786197}, the latter condition is equivalent to that $\overline{H}$ is co-Frobenius. The proof is done.
\end{proof}

\begin{example}
  [Integrals on affine algebraic supergroup schemes \cite{MR4375528}]
  \label{ex:Masuoka-Shibata-Shimada}
  We assume that the base field $\bfk$ is of characteristic $p \ne 2$.
  We take $\mathcal{C}$ to be the category of supervector spaces.
  Let $G$ be an affine algebraic supergroup scheme represented by a Hopf superalgebra $H := \mathcal{O}(G)$.
  We denote by $\Rep(G)$ the category of finite-dimensional supercomodules over $H$.
  Masuoka \cite[Theorem 4.5]{MR2163412} showed that there is a finite-dimensional vector space $V$ and an isomorphism $H \cong \overline{H} \otimes_{\bfk} \bigwedge V$ of left $\overline{H}$-comodules, where $\bigwedge V$ means the exterior algebra of $V$.
  Thus, by Theorem \ref{thm:cointegral}, $\Rep(G)$ is Frobenius if and only if the Hopf algebra $\mathcal{O}(G_{\mathrm{ev}})$ is co-Frobenius, where $G_{\mathrm{ev}}$ is the even part of $G$ ({\it cf}. \cite[Theorem 3.7]{MR4375528}).
\end{example}

\begin{example}
  [Affine algebraic group schemes in the Verlinde category \cite{MR3556434,2019arXiv190911240V,2022arXiv220303158V}]
  \label{ex:Venkatesh}
  \newcommand{\Ver}{\mathrm{Ver}}
  We assume that the base field $\bfk$ is an algebraically closed field of characteristic $p > 2$.
  Then the symmetric finite tensor category $\mathcal{C} := \Ver_p$, called the Verlinde category, is defined as the semisimplification of the category of finite-dimensional representations of the cyclic group of order $p$.
  Commutative Hopf algebras in $\Ind(\mathcal{C})$ were studied by Venkatesh in \cite{MR3556434,2019arXiv190911240V,2022arXiv220303158V}.
  Let $H$ be a finitely generated commutative Hopf algebra in $\Ind(\mathcal{C})$.
  According to \cite[Proposition 3.1]{MR3556434} and \cite[Lemma 7.15]{2019arXiv190911240V}, there is an object $W \in \mathcal{C}$ and an isomorphism $H \cong \overline{H} \otimes W$ of left $\overline{H}$-comodules.
  Thus, by Theorem \ref{thm:cointegral}, the tensor category $\mathcal{C}^H$ is Frobenius if and only if the Hopf algebra $\overline{H}$ is co-Frobenius.
\end{example}

\subsection{De-equivariantization by an affine group scheme}
\label{subsec:de-equiv}

Given a category on which a group acts, its equivariantization is defined as the category of `fixed points' of the action of the group \cite{MR2609644}. The de-equivariantization is, in a sense, the inverse of this procedure.
As pointed out in \cite{MR2863377,MR3161401}, the de-equivariantization by a finite group can be understood in terms of exact sequences of tensor categories.
With a view to the results on the de-equivariantization of Hopf algebras by affine group schemes \cite{MR3160718}, we slightly extend the argument of \cite{MR3161401} and discuss how our results can be applied to the de-equivariantization by an affine group scheme.

We first prepare some lemmas to introduce the de-equivariantization.
Let $\mathcal{C}$ be a tensor category.
Given an algebra $A$ in $\Ind(\mathcal{C})$, we denote by $\Ind(\mathcal{C})_A$ the category of right $A$-modules in $\Ind(\mathcal{C})$.
Since $\Ind(\mathcal{C})$ is a Grothendieck category and $(-) \otimes A$ is a linear exact cocontinuous monad on it, $\Ind(\mathcal{C})_A$ is a Grothendieck category.

We say that an object $X$ of a category $\mathcal{A}$ is {\em finitely presented} (respectively, {\em finitely generated}) if the functor $\Hom_{\mathcal{A}}(X, -) : \mathcal{A} \to \Sets$ preserves filtered colimits (respectively, filtered colimits of monomorphisms).
When the category $\mathcal{A}$ is Grothendieck, it is known that the following assertions are equivalent for an object $X \in \mathcal{A}$ ({\it cf}. \cite[Chapter V, Proposition 3.2]{MR0389953}):
\begin{enumerate}
\item $X$ is finitely generated.
\item For every filtered family $\{ X_i \}_{i \in I}$ of subobjects of $X$ such that $X = \varinjlim_{i \in I} X_i$, there is an element $i_0 \in I$ such that $X_{i_0} = X$.
\item For every family $\{ X_\lambda \}_{\lambda \in \Lambda}$ of subobjects of $X$ such that $X = \sum_{\lambda \in \Lambda} X_\lambda$, there is a finite subset $\{ \lambda_1, \cdots, \lambda_n \}$ of $\Lambda$ such that $X = \sum_{i = 1}^n X_{\lambda_i}$.
\end{enumerate}
We also note that the following assertions are equivalent for an object $X \in \mathcal{A}$ ({\it cf}. \cite[Chapter V, Proposition 3.4]{MR0389953}):
\begin{enumerate}
\item $X$ is finitely presented.
\item For any epimorphism $f: W \to X$ in $\mathcal{A}$ with $W$ a finitely generated object of $\mathcal{A}$, the kernel of $f$ is finitely generated.
\end{enumerate}

Finitely generated objects of $\Ind(\mathcal{C})_A$ are characterized as follows:

\begin{lemma}
  \label{lem:fg-objects-in-Ind-C-A}
  An object of $\Ind(\mathcal{C})_A$ is finitely generated if and only if it is a quotient of a right $A$-module of the form $F_A(X)$ for some object $X \in \mathcal{C}$, where $F_A$ is the free right $A$-module functor
  \begin{equation}
    \label{eq:Ind-C-A-free-functor}
    F_A : \Ind(\mathcal{C}) \to \Ind(\mathcal{C})_A,
    \quad X \mapsto X \otimes A.
  \end{equation}
\end{lemma}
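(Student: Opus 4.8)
The plan is to prove both implications directly using the universal property of the free module functor $F_A$ together with the characterization of finitely generated objects in a Grothendieck category recalled just above. Note first that $F_A$ is left adjoint to the forgetful functor $U_A : \Ind(\mathcal{C})_A \to \Ind(\mathcal{C})$, and that $U_A$ preserves and reflects filtered colimits and monomorphisms, since the monad $(-) \otimes A$ is exact and cocontinuous. In particular, for any $X \in \mathcal{C}$, the object $F_A(X) = X \otimes A$ is finitely generated in $\Ind(\mathcal{C})_A$: indeed $\Hom_{\Ind(\mathcal{C})_A}(F_A(X), -) \cong \Hom_{\Ind(\mathcal{C})}(X, U_A(-))$ as functors to $\Sets$, and this preserves filtered colimits of monomorphisms because $X$ lies in $\mathcal{C}$ (hence is a finite, in particular finitely generated, object of $\Ind(\mathcal{C})$) and $U_A$ preserves such colimits. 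Since a quotient of a finitely generated object is finitely generated, one direction of the claim follows at once.

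For the converse, suppose $M \in \Ind(\mathcal{C})_A$ is finitely generated. Write $U_A(M) = \varinjlim_{\lambda \in \Lambda} M_\lambda$ as a filtered colimit of objects $M_\lambda \in \mathcal{C}$ inside $\Ind(\mathcal{C})$. Applying $F_A$ and using that it is a left adjoint (hence cocontinuous), we get $F_A(U_A(M)) = \varinjlim_{\lambda} F_A(M_\lambda)$ in $\Ind(\mathcal{C})_A$. The counit $\varepsilon_M : F_A(U_A(M)) \to M$ of the adjunction is an epimorphism (it is split epi after applying $U_A$, via the unit of $M$ as an $A$-module, and $U_A$ reflects epimorphisms). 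Composing, $M$ is the filtered colimit of the images $N_\lambda := \Img\bigl(F_A(M_\lambda) \to F_A(U_A(M)) \xrightarrow{\varepsilon_M} M\bigr)$, which form a filtered family of subobjects of $M$ with $M = \varinjlim_\lambda N_\lambda = \sum_\lambda N_\lambda$. By the characterization of finitely generated objects in a Grothendieck category, there is $\lambda_0$ with $N_{\lambda_0} = M$; that is, $M$ is a quotient of $F_A(M_{\lambda_0})$ with $M_{\lambda_0} \in \mathcal{C}$, as desired.

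The only mild subtlety — and the step I expect to require the most care — is justifying that the colimit presentation of $U_A(M) \in \Ind(\mathcal{C})$ by objects of $\mathcal{C}$ can be pushed through $F_A$ and combined with the counit to produce a filtered family of \emph{subobjects} of $M$ to which clause (2) of the finitely-generated characterization applies; this uses that $\Ind(\mathcal{C})_A$ is Grothendieck (so that images and filtered colimits of subobjects behave well) and that the relevant transition maps are monomorphisms, which in turn follows from exactness of filtered colimits in a Grothendieck category. Everything else is a formal manipulation of adjunctions.
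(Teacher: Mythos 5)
Your proof is correct and follows essentially the same route as the paper: the ``if'' direction uses the adjunction isomorphism $\Hom_A(F_A(V),-)\cong\Hom_{\Ind(\mathcal{C})}(V,U_A(-))$ exactly as in the paper, and your ``only if'' direction is the paper's argument repackaged --- your subobjects $N_\lambda$ are precisely the images of $X_\lambda\otimes A$ under the action map that the paper uses, and you invoke the filtered-family form of the finitely-generated criterion where the paper invokes the arbitrary-family form and takes a finite direct sum. The ``mild subtlety'' you flag is in fact a non-issue: the $N_\lambda$ are subobjects of $M$ by construction and the containments $N_\lambda\subseteq N_\mu$ follow from commutativity over $M$, with no need for the transition maps $M_\lambda\to M_\mu$ to be monic.
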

\begin{proof}
  Suppose that a right $A$-module $M$ is finitely generated. Let $\{ X_{\lambda} \}_{\lambda \in \Lambda}$ be the set of all finite subobjects of $M$ (as an object of $\Ind(\mathcal{C})$). For each $\lambda$, we define $M_{\lambda}$ to be the image of $X_{\lambda} \otimes A$ under the action $M \otimes A \to M$. Then $\{ M_{\lambda} \}_{\lambda \in \Lambda}$ is a family of submodule of $M$ such that $M = \sum_{\lambda \in \Lambda} M_{\lambda}$.
  By the assumption that $M$ is finitely generated, there is a finite subset $\{ \lambda_1, \cdots, \lambda_n \}$ of $\Lambda$ such that $M = \sum_{i = 1}^n M_{\lambda_i}$. Hence $M$ is a quotient of $F_A(X_{\lambda_1} \oplus \dotsb \oplus X_{\lambda_n})$.

  We show the converse. Since a quotient of a finitely generated object is finitely generated, it suffices to show that a right $A$-module of the form $F_A(V)$, $V \in \mathcal{C}$, is finitely generated. Let $V$ be an object of $\mathcal{C}$ (we note that then $V$ is a finitely presented object of $\Ind(\mathcal{C})$). Let $U: \Ind(\mathcal{C})_A \to \Ind(\mathcal{C})$ be the forgetful functor.
  Then there is an isomorphism $\Hom_A(F_A(V), -) \cong \Hom_{\Ind(\mathcal{C})}(V, -) \circ U$ of functors.
  Since both $\Hom_{\Ind(\mathcal{C})}(V, -)$ and $U$ preserve filtered colimits, $F_A(V)$ is finitely presented. In particular, it is finitely generated. The proof is done.
\end{proof}

Let $\mathrm{mod}_{\mathcal{C}}(A)$ denote the full subcategory of $\Ind(\mathcal{C})_A$ consisting of finitely generated objects.
The category $\mathrm{mod}_{\mathcal{C}}(A)$ is closed under cokernels, however, may not be closed under kernels.
The following lemma says that $\mathrm{mod}_{\mathcal{C}}(A)$ has kernels if and only if $\mathrm{mod}_{\mathcal{C}}(A)$ is closed under kernels.

\begin{lemma}
  \label{lem:kernel-in-fgmod}
  Let $f: M \to N$ be a morphism in $\mathrm{mod}_{\mathcal{C}}(A)$.
  Then the kernel of $f$ exists in $\mathrm{mod}_{\mathcal{C}}(A)$ if and only if the kernel of $f$ in $\Ind(\mathcal{C})_A$ is finitely generated.
\end{lemma}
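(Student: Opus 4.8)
The statement is an if-and-only-if. The backward direction is trivial: if the kernel $L$ of $f$ in $\Ind(\mathcal{C})_A$ is finitely generated, then $L \in \mathrm{mod}_{\mathcal{C}}(A)$, and since $\mathrm{mod}_{\mathcal{C}}(A)$ is a full subcategory closed under no relevant condition but simply full, the universal property of $L$ as a kernel is inherited — any morphism from an object of $\mathrm{mod}_{\mathcal{C}}(A)$ into $M$ killed by $f$ factors uniquely through $L$, and this factorization lands in $\mathrm{mod}_{\mathcal{C}}(A)$ because $L$ itself does. So the content is entirely in the forward direction.

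For the forward direction, suppose $\ker(f)$ computed in $\mathrm{mod}_{\mathcal{C}}(A)$ exists; call it $K$, with structure map $\kappa : K \to M$. Let $L$ be the kernel of $f$ taken in the ambient Grothendieck category $\Ind(\mathcal{C})_A$, with inclusion $j : L \hookrightarrow M$. Since $f \circ j = 0$ and $K$ is the kernel in $\mathrm{mod}_{\mathcal{C}}(A)$, I would first observe that we cannot directly apply the universal property of $K$ to $j$ because $L$ need not lie in $\mathrm{mod}_{\mathcal{C}}(A)$. Instead, the plan is to write $L$ as a filtered colimit $L = \varinjlim_{\lambda} L_\lambda$ of its finitely generated subobjects (possible since $\Ind(\mathcal{C})_A$ is Grothendieck, hence every object is the filtered union of its finitely generated subobjects). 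Each $L_\lambda \in \mathrm{mod}_{\mathcal{C}}(A)$ and each composite $L_\lambda \hookrightarrow L \xrightarrow{j} M$ is killed by $f$, so by the universal property of $K$ there is a unique morphism $g_\lambda : L_\lambda \to K$ with $\kappa \circ g_\lambda$ equal to the given inclusion. Uniqueness makes these compatible with the transition maps, so they assemble to $g : L \to K$ (the colimit exists in $\Ind(\mathcal{C})_A$) with $\kappa \circ g = j$. Conversely, since $\kappa : K \to M$ satisfies $f \circ \kappa = 0$, the universal property of $L$ in $\Ind(\mathcal{C})_A$ gives a unique $h : K \to L$ with $j \circ h = \kappa$. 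Then $j \circ h \circ g = \kappa \circ g = j$ and $j$ is monic, so $h \circ g = \id_L$; and $\kappa \circ g \circ h = j \circ h = \kappa$, and I need $\kappa$ monic to conclude $g \circ h = \id_K$ — which holds because $K$, being a kernel, has monic structure map $\kappa$ (its universal property forces this). Hence $K \cong L$ in $\Ind(\mathcal{C})_A$, so $L$ is finitely generated.

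The step I expect to require the most care is verifying that the maps $g_\lambda : L_\lambda \to K$ genuinely glue to a morphism out of the colimit $L$, i.e.\ that $\varinjlim_\lambda L_\lambda$ really is $L$ as an object of $\Ind(\mathcal{C})_A$ and that $\Hom_{\Ind(\mathcal{C})_A}(-, K)$ behaves well enough on this filtered system; this is where one uses that $\{L_\lambda\}$ is filtered and that $K$ receives a cocone, rather than any finiteness of $K$. One subtlety worth stating explicitly: a priori $K$ need only be a \emph{weak} candidate, but the universal property of a categorical kernel in the full subcategory, combined with $L_\lambda \in \mathrm{mod}_{\mathcal{C}}(A)$, gives the factorizations uniquely, and that uniqueness is exactly what is needed for compatibility. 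Everything else — that $\kappa$ is monic, that $j$ is monic, the two-sided inverse computation — is formal. I would close by remarking that this lemma is what will let us replace "$\mathrm{mod}_{\mathcal{C}}(A)$ has kernels" with "$\mathrm{mod}_{\mathcal{C}}(A)$ is closed under kernels in $\Ind(\mathcal{C})_A$" in the sequel.
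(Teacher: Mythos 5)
Your proof is correct and follows essentially the same route as the paper: both arguments hinge on writing an object of $\Ind(\mathcal{C})_A$ as the filtered union of its finitely generated subobjects, applying the universal property of the kernel $K$ in $\mathrm{mod}_{\mathcal{C}}(A)$ to each piece, and gluing by uniqueness (the paper tests $K$ against an arbitrary object of $\Ind(\mathcal{C})_A$ to conclude it is already the ambient kernel, while you specialize the test object to $L = \Ker(f)$ and exhibit mutually inverse maps, which amounts to the same thing). One small point: the decomposition into finitely generated subobjects should be justified not by the Grothendieck property alone (which does not imply local finite generation in general) but by the fact that the underlying object of any right $A$-module lies in $\Ind(\mathcal{C})$, as in the paper's proof.
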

\begin{proof}
  The `if' part is clear.
  The converse is proved as follows:
  We suppose that the morphism $f$ has a kernel in $\mathrm{mod}_{\mathcal{C}}(A)$ and denote it by $k: K \to M$.
  Let $g : X \to M$ be a morphism in $\Ind(\mathcal{C})_A$ such that $f \circ g = 0$.
  Since the underlying object of the right $A$-module $X$ is an inductive limit of objects of $\mathcal{C}$, we can write $X$ as an inductive limit of finitely generated subobjects of $X$, as $X = \varinjlim_{\lambda \in \Lambda} X_{\lambda}$. For each $\lambda$, we denote by $i_{\lambda} : X_{\lambda} \to X$ the canonical morphism.
  By the universal property of the kernel, there exists a unique morphism $\phi_{\lambda} : X_{\lambda} \to K$ of right $A$-modules such that $g \circ i_{\lambda} = k \circ \phi_{\lambda}$.
  There is a morphism $\phi : X \to K$ of right $A$-modules such that $\phi \circ i_{\lambda} = \phi_{\lambda}$ for all $\lambda \in \Lambda$.
  The morphism $\phi$ satisfies $k \circ \phi = g$ and this equation uniquely characterizes $\phi$. The proof is done.
\end{proof}

The Drinfeld center \cite{MR3242743} of a monoidal category $\mathcal{M}$, denoted by $\mathcal{Z}(\mathcal{M})$, is the braided monoidal category whose object is a pair $(M, \sigma_M)$ consisting of an object $M \in \mathcal{M}$ and a natural isomorphism $\sigma_{M,X} : M \otimes X \to X \otimes M$ ($X \in \mathcal{M}$) satisfying a condition like the axiom for braiding. Now let $(A, \sigma_A)$ be a commutative algebra in $\mathcal{Z}(\Ind(\mathcal{C}))$. Then a right $A$-module is made into an $A$-bimodule through the natural isomorphism $\sigma$. Moreover, $\Ind(\mathcal{C})_A$ is a monoidal category with respect to the tensor product $\otimes_A$ over $A$. As desired, we have:

\begin{lemma}
  \label{lem:fg-objects-in-Ind-C-A-tensor}
  The full subcategory $\mathrm{mod}_{\mathcal{C}}(A)$ is closed under $\otimes_A$.
\end{lemma}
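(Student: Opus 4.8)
The plan is to show that if $M$ and $N$ are finitely generated right $A$-modules in $\Ind(\mathcal{C})$, then so is $M \otimes_A N$. By Lemma~\ref{lem:fg-objects-in-Ind-C-A}, being finitely generated means being a quotient of a free module $F_A(X) = X \otimes A$ for some $X \in \mathcal{C}$, so first I would choose epimorphisms $F_A(X) \to M$ and $F_A(Y) \to N$ with $X, Y \in \mathcal{C}$. Since $\otimes_A$ is right exact in each variable (it is a colimit-type construction, and $\Ind(\mathcal{C})_A$ is a Grothendieck category on which $(-) \otimes A$ is cocontinuous), tensoring these epimorphisms over $A$ yields an epimorphism $F_A(X) \otimes_A F_A(Y) \to M \otimes_A N$. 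Thus it suffices to identify $F_A(X) \otimes_A F_A(Y)$ and check it is finitely generated.

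The key computation is the standard identification $F_A(X) \otimes_A F_A(Y) \cong F_A(X \otimes Y)$ for the free module functor of a monoidal category: one has $(X \otimes A) \otimes_A (Y \otimes A) \cong X \otimes Y \otimes A$ as right $A$-modules, using the commutativity datum $\sigma_A$ of $A$ in the Drinfeld center to move the middle copy of $A$ past $Y$ (this is exactly where the braiding $\sigma$ on $A$ is needed, and it is the same structure that makes $\Ind(\mathcal{C})_A$ monoidal with respect to $\otimes_A$). Since $X, Y \in \mathcal{C}$ and $\mathcal{C}$ is a tensor category, $X \otimes Y \in \mathcal{C}$, so $F_A(X \otimes Y)$ is a free module on an object of $\mathcal{C}$ and hence finitely generated by (the converse direction of) Lemma~\ref{lem:fg-objects-in-Ind-C-A}. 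Therefore $M \otimes_A N$, being a quotient of a finitely generated object, is finitely generated, as quotients of finitely generated objects are finitely generated.

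The main obstacle I expect is purely bookkeeping: verifying the isomorphism $F_A(X) \otimes_A F_A(Y) \cong F_A(X \otimes Y)$ compatibly with the right $A$-module structures, since the left $A$-action on a right $A$-module here is the one induced by $\sigma_A$, and one must make sure the coequalizer defining $\otimes_A$ is computed correctly with this bimodule convention. Concretely, $M \otimes_A N$ is the coequalizer of the two maps $M \otimes A \otimes N \rightrightarrows M \otimes N$ given by the right action on $M$ and the (braided) left action on $N$; plugging in $M = X \otimes A$ and $N = Y \otimes A$ and using that $A$ is associative and $\sigma_A$ is a half-braiding, the coequalizer collapses to $X \otimes Y \otimes A$. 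I would present this as a short lemma-style verification rather than a diagram chase, citing the general fact that for a braided (or here, central) algebra the free-module functor is monoidal. Everything else — right exactness of $\otimes_A$ and the appeal to Lemma~\ref{lem:fg-objects-in-Ind-C-A} — is routine.
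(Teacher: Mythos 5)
Your proposal is correct and follows essentially the same route as the paper: present $M$ and $N$ as quotients of $F_A(X)$ and $F_A(Y)$ via Lemma~\ref{lem:fg-objects-in-Ind-C-A}, use right exactness of $\otimes_A$ to exhibit $M \otimes_A N$ as a quotient of $F_A(X) \otimes_A F_A(Y) \cong F_A(X \otimes Y)$, and conclude by Lemma~\ref{lem:fg-objects-in-Ind-C-A} again. The extra detail you give on verifying the isomorphism $F_A(X) \otimes_A F_A(Y) \cong F_A(X \otimes Y)$ via the coequalizer and the half-braiding is a correct elaboration of a step the paper states without proof.
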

\begin{proof}
  Let $M$ and $N$ be finitely generated objects of $\Ind(\mathcal{C})_A$.
  By Lemma \ref{lem:fg-objects-in-Ind-C-A}, $M$ and $N$ are quotients of objects of the form $F_A(X)$ and $F_A(Y)$, respectively, for some objects $X$ and $Y$ of $\mathcal{C}$. By the right exactness of $\otimes_A$, the tensor product $M \otimes_A N$ is a quotient of the right $A$-module $F_A(X) \otimes_A F_A(Y)$, which is isomorphic to $F_A(X \otimes Y)$.
  Hence, by Lemma \ref{lem:fg-objects-in-Ind-C-A}, $M \otimes_A N$ is finitely generated. The proof is done.
\end{proof}

Let $\mathcal{C}$ be a tensor category, and let $\mathcal{B}$ be a braided tensor category.
A {\em central inclusion functor} for $\mathcal{B}$ into $\mathcal{C}$ is a fully faithful tensor functor $\iota: \mathcal{B} \to \mathcal{C}$ together with a tensor functor $F: \mathcal{B} \to \mathcal{Z}(\mathcal{C})$ such that $F$ preserves the braiding and the equation $U \circ F = \iota$ holds, where $U: \mathcal{Z}(\mathcal{C}) \to \mathcal{C}$ is the forgetful functor.
We remark:

\begin{lemma}
  \label{lem:Ind-lifting}
  Let $\mathcal{C}$ be a tensor category, and let $\mathcal{B}$ be a braided tensor category.
  Then any braided tensor functor $F: \mathcal{B} \to \mathcal{Z}(\mathcal{C})$ extends to a `braided tensor functor' from $\Ind(\mathcal{B})$ to $\mathcal{Z}(\Ind(\mathcal{C}))$.
\end{lemma}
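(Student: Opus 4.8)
\textbf{Proof plan for Lemma \ref{lem:Ind-lifting}.}
The plan is to work with the explicit description of the Drinfeld center and extend all the relevant structure morphisms along filtered colimits. First, I would recall that an object of $\mathcal{Z}(\Ind(\mathcal{C}))$ is a pair $(M, \sigma_M)$ with $M \in \Ind(\mathcal{C})$ and $\sigma_{M, X} : M \otimes X \to X \otimes M$ a natural isomorphism in $X \in \Ind(\mathcal{C})$ satisfying the hexagon-type coherence. Since the tensor product of $\Ind(\mathcal{C})$ preserves filtered colimits in each variable, a half-braiding on $M$ with respect to all objects of $\mathcal{C}$ automatically extends (uniquely) to a half-braiding with respect to all objects of $\Ind(\mathcal{C})$: given $X = \varinjlim_{\lambda} X_\lambda$ with $X_\lambda \in \mathcal{C}$, set $\sigma_{M, X} = \varinjlim_\lambda \sigma_{M, X_\lambda}$, and the coherence axioms follow from those on the $X_\lambda$'s by cocontinuity. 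Dually, given a family of half-braidings $\sigma_{M_\lambda, X}$ for $M_\lambda \in \mathcal{C}$ compatible with a filtered system, one obtains $\sigma_{\varinjlim M_\lambda, X}$ on the colimit. Thus $\mathcal{Z}(\Ind(\mathcal{C}))$ can be identified with (or at least receives a canonical fully faithful functor from) $\Ind(\mathcal{Z}(\mathcal{C}))$; in fact the monoidal structure on $\mathcal{Z}(\mathcal{C})$ makes $\Ind(\mathcal{Z}(\mathcal{C}))$ a braided monoidal category as in Section \ref{subsec:Nakayama-fun}, and the comparison functor $\Ind(\mathcal{Z}(\mathcal{C})) \to \mathcal{Z}(\Ind(\mathcal{C}))$ is a braided monoidal functor.

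Next, given the braided tensor functor $F : \mathcal{B} \to \mathcal{Z}(\mathcal{C})$, I would form the induced functor $\Ind(F) : \Ind(\mathcal{B}) \to \Ind(\mathcal{Z}(\mathcal{C}))$, which is the unique cocontinuous extension of $F$ and is monoidal because the monoidal structures on both ind-completions are the cocontinuous extensions of those on $\mathcal{B}$ and $\mathcal{Z}(\mathcal{C})$ (the monoidal constraints of $\Ind(F)$ are the colimits of those of $F$). It is braided for the same reason: the braiding on each ind-completion is determined by the braiding on the small category, and $F$ respects the latter. Composing with the comparison functor $\Ind(\mathcal{Z}(\mathcal{C})) \to \mathcal{Z}(\Ind(\mathcal{C}))$ produces the desired braided monoidal functor $\Ind(\mathcal{B}) \to \mathcal{Z}(\Ind(\mathcal{C}))$, which restricts to $F$ on $\mathcal{B}$. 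One should also check that the underlying-object functors are compatible, i.e. that the composite $\Ind(\mathcal{B}) \to \mathcal{Z}(\Ind(\mathcal{C})) \to \Ind(\mathcal{C})$ is $\Ind(U \circ F)$, so that later, when $F$ together with a fully faithful $\iota : \mathcal{B} \to \mathcal{C}$ forms a central inclusion, the extension together with $\Ind(\iota)$ again has this property.

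The main obstacle, such as it is, is bookkeeping rather than a genuine difficulty: one must verify that the half-braiding obtained by taking colimits is indeed \emph{natural} in the second variable over all of $\Ind(\mathcal{C})$ (not just over $\mathcal{C}$) and that it is an \emph{isomorphism}, and that the hexagon axioms, which involve triple tensor products, survive passage to filtered colimits — this is where one uses that $\otimes$ on $\Ind(\mathcal{C})$ is cocontinuous in each variable, hence so is the triple tensor product. I would phrase the argument so as to avoid writing out these diagrams explicitly, invoking instead the general principle that a monoidal (resp. braided monoidal) structure on a locally finite abelian category extends uniquely, up to canonical isomorphism, to a cocontinuous such structure on its ind-completion, together with the fact that a (braided) monoidal functor between small categories extends uniquely to a cocontinuous (braided) monoidal functor on ind-completions. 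Given these, the proof is essentially formal, which is why we state the lemma with the informal phrase ``braided tensor functor'' in quotation marks: the target $\Ind(\mathcal{B})$ is not a tensor category in the sense of this paper (it is not locally finite and its objects need not be rigid), but it carries all the structure needed for the constructions in the remainder of this subsection.
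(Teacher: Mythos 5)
Your proposal is correct and follows essentially the same route as the paper: write objects of $\Ind(\mathcal{B})$ and $\Ind(\mathcal{C})$ as filtered colimits of finite objects, extend $F$ and the half-braidings by taking colimits in both variables, and deduce naturality, invertibility and the coherence axioms from the cocontinuity of $\otimes$ on $\Ind(\mathcal{C})$. The only caution is that your parenthetical suggestion that $\Ind(\mathcal{Z}(\mathcal{C}))$ might be \emph{identified} with $\mathcal{Z}(\Ind(\mathcal{C}))$ is neither needed nor clearly true in general; only the comparison functor $\Ind(\mathcal{Z}(\mathcal{C})) \to \mathcal{Z}(\Ind(\mathcal{C}))$ is used, and your hedged formulation already covers this.
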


Here, despite that $\Ind(\mathcal{B})$ and $\mathcal{Z}(\Ind(\mathcal{C}))$ are not tensor categories, we use the word `braided tensor functor' to mean a linear functor from $\Ind(\mathcal{B})$ to $\mathcal{Z}(\Ind(\mathcal{C}))$ preserving the tensor product and the braiding.

\begin{proof}
  Given an object $V \in \mathcal{B}$, we write $F(V) \in \mathcal{Z}(\mathcal{C})$ as $F(V) = (f(V), \sigma_{f(V)})$.
  Let $X$ and $M$ be objects of $\Ind(\mathcal{B})$ and $\Ind(\mathcal{C})$, respectively, and express them as filtered colimits of finite objects, as $X = \varinjlim_{\lambda \in \Lambda} X_{\lambda}$ and $M = \varinjlim_{\gamma \in \Gamma} M_{\gamma}$.
  We set $\tilde{f}(X) = \varinjlim_{\lambda \in \Lambda} f(X_{\lambda})$ and let $\tilde{\sigma}_{X,M} : \tilde{f}(X) \otimes M \to M \otimes \tilde{f}(X)$ be the morphism in $\Ind(\mathcal{C})$ induced by the family
  \begin{equation*}
    \sigma_{f(X_{\lambda}), M_{\gamma}} : f(X_{\lambda}) \otimes M_{\gamma} \to M_{\gamma} \otimes f(X_{\lambda})
    \quad (\lambda \in \Lambda, \gamma \in \Gamma)
  \end{equation*}
  of morphisms in $\mathcal{C}$.
  One can check that the assignment $X \mapsto (\tilde{f}(X), \tilde{\sigma}_X)$ gives rise to a `braided tensor functor' from $\Ind(\mathcal{B})$ to $\mathcal{Z}(\Ind(\mathcal{C}))$, as desired.
\end{proof}

Let $G$ be an affine group scheme represented by $\mathcal{O}(G)$.
As is well-known, $\Rep(G) := \Mod^{\mathcal{O}(G)}_{\fd}$ is a symmetric tensor category and the algebra $\mathcal{O}(G)$ is a commutative algebra in $\Ind(\Rep(G))$ in a natural way.
Now let $\mathcal{C}$ be a tensor category endowed with a central inclusion functor $\iota: \Rep(G) \to \mathcal{C}$. Lemma \ref{lem:Ind-lifting} allows us to view $\mathcal{O}(G)$ as a commutative algebra in $\mathcal{Z}(\Ind(\mathcal{C}))$. Thus the monoidal category $\Ind(\mathcal{C})_{\mathcal{O}(G)}$ is defined.

\begin{theorem}
  \label{thm:de-equivari}
  Let $G$, $\mathcal{C}$ and $\iota: \Rep(G) \to \mathcal{C}$ be as above.
  Suppose that the full subcategory $\mathcal{D} := \mathrm{mod}_{\mathcal{C}}(\mathcal{O}(G))$ of finitely generated objects of $\Ind(\mathcal{C})_{\mathcal{O}(G)}$ is a tensor category with respect to $\otimes_{\mathcal{O}(G)}$.
  Then the following hold:
  \begin{enumerate}
  \item $\mathcal{D}$ coincides with the full subcategory of finitely presented objects of $\Ind(\mathcal{C})_{\mathcal{O}(G)}$.
  \item There is an equivalence $\Ind(\mathcal{C})_{\mathcal{O}(G)} \approx \Ind(\mathcal{D})$ of linear monoidal categories.
  \item There is an exact sequence $\Rep(G) \to \mathcal{C} \to \mathcal{D}$ of tensor categories.
  \item The tensor category $\mathcal{C}$ is Frobenius if and only if the Hopf algebra $\mathcal{O}(G)$ is co-Frobenius and the tensor category $\mathcal{D}$ is Frobenius.
  \end{enumerate}
\end{theorem}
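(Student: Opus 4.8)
The plan is to set $A = \mathcal{O}(G)$, write $F_A\colon \Ind(\mathcal{C}) \to \Ind(\mathcal{C})_A$, $X \mapsto X \otimes A$, for the free module functor and $U\colon \Ind(\mathcal{C})_A \to \Ind(\mathcal{C})$ for the forgetful functor, so that $F_A \dashv U$ and $U$ is exact and faithful, and to take the tensor functor of part~(3) to be $F := F_A|_{\mathcal{C}}\colon \mathcal{C} \to \mathcal{D}$. This $F$ lands in $\mathcal{D}$ by Lemma~\ref{lem:fg-objects-in-Ind-C-A}, is exact because $U \circ F_A = (-)\otimes A$ is exact and the faithful exact functor $U$ reflects exact sequences, and is monoidal by the isomorphisms $F_A(X)\otimes_A F_A(Y) \cong F_A(X\otimes Y)$ and $F_A(\unitobj) = A = \unitobj_{\mathcal{D}}$ already exhibited in the proof of Lemma~\ref{lem:fg-objects-in-Ind-C-A-tensor}. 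I would then treat the four parts in turn.

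For part~(1): a finitely presented object of $\Ind(\mathcal{C})_A$ is finitely generated, hence lies in $\mathcal{D}$. Conversely, let $M \in \mathcal{D}$ and let $q\colon W \twoheadrightarrow M$ be any epimorphism in $\Ind(\mathcal{C})_A$ with $W$ finitely generated; then $W, M \in \mathcal{D}$, and since $\mathcal{D}$ is abelian (being a tensor category) it has a kernel of $q$, so by Lemma~\ref{lem:kernel-in-fgmod} the kernel of $q$ taken in $\Ind(\mathcal{C})_A$ is finitely generated, whence $M$ is finitely presented by the criterion recalled before Lemma~\ref{lem:fg-objects-in-Ind-C-A}. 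This is the step where the hypothesis that $\mathcal{D}$ is a tensor category really enters: it is used only through abelianness. Part~(2) should then follow from part~(1) and the recognition theorem for ind-completions \cite{MR2182076}. Indeed, by Lemma~\ref{lem:fg-objects-in-Ind-C-A} every object of the Grothendieck category $\Ind(\mathcal{C})_A$ is a filtered colimit of its finitely generated subobjects, $\mathcal{D}$ is essentially small and --- being closed in $\Ind(\mathcal{C})_A$ under cokernels, finite direct sums and (by part~(1)) kernels --- closed under finite colimits, and by part~(1) it is precisely the subcategory of finitely presented objects; hence the canonical functor $\Ind(\mathcal{D}) \to \Ind(\mathcal{C})_A$ is an equivalence. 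It is monoidal because $\otimes_A$, being a coequaliser of functors built from the cocontinuous $\otimes$ of $\Ind(\mathcal{C})$, preserves filtered colimits in each variable and so coincides with the cocontinuous extension of $\otimes_{\mathcal{D}}$ that defines the monoidal structure of $\Ind(\mathcal{D})$.

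For part~(3), I would verify the four axioms of an exact sequence for $\Rep(G) \xrightarrow{\ \iota\ } \mathcal{C} \xrightarrow{\ F\ } \mathcal{D}$: the functor $\iota$ is fully faithful by the definition of a central inclusion functor, and $F$ is dominant since every object of $\mathcal{D}$ is a quotient of some $F(V)$, $V \in \mathcal{C}$, by Lemma~\ref{lem:fg-objects-in-Ind-C-A}. Under the equivalence of part~(2) the ind-adjoint of $F$ is identified with $U$, and $U(\unitobj_{\mathcal{D}}) = U(A) = \mathcal{O}(G)$ as an object of $\Ind(\mathcal{C})$; since $\mathcal{O}(G)$, viewed as its regular comodule, lies in the image of $\Ind(\iota)$, every finite-length subobject of $U(\unitobj_{\mathcal{D}})$ is a finite-dimensional $G$-comodule and therefore belongs to $\iota(\Rep(G)) \subseteq \Ker(F)$, so $F$ is normal by Lemma~\ref{lem:normality-BN11-Prop-3-5}. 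The identification $\Ker(F) = \iota(\Rep(G))$ would be obtained exactly as in the finite-group case of Brugui\`eres--Natale \cite{MR3161401,MR2863377}, the inclusion $\subseteq$ using the standard trivialisation $V \otimes \mathcal{O}(G) \cong \mathcal{O}(G)^{\oplus \dim_{\bfk} V}$ of $G$-comodules to see that $F(\iota(V))$ is trivial in $\mathcal{D}$, and the reverse inclusion by reconstructing a $G$-comodule structure on any $X$ with $F(X)$ trivial. Part~(4) is then immediate: $U$ is exact, so Theorem~\ref{thm:Frobenius-closed-under-exact-seq} applied to the exact sequence of part~(3) shows that $\mathcal{C}$ is Frobenius if and only if both $\Rep(G)$ and $\mathcal{D}$ are; and $\Rep(G) = \Mod^{\mathcal{O}(G)}_{\fd}$ is Frobenius if and only if $\mathcal{O}(G)$ is co-Frobenius, by Lemma~\ref{lem:Frobenius-criterions-0} together with the coincidence of QcF and co-Frobenius for Hopf algebras \cite[Chapter~5]{MR1786197}.

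The main obstacle I expect is not any single computation but getting the foundations of parts~(1)--(2) exactly right: one must see that the assumption that $\mathcal{D}$ is a tensor category is used precisely through its abelianness, which via Lemma~\ref{lem:kernel-in-fgmod} forces $\mathcal{D}$ to be closed under kernels in $\Ind(\mathcal{C})_A$ and hence to be the entire subcategory of finitely presented objects --- the point that makes $\Ind(\mathcal{C})_A \approx \Ind(\mathcal{D})$ and identifies the ind-adjoint of $F$ with the manifestly exact forgetful functor $U$. Once this is secured, part~(3) is a routine adaptation of \cite{MR3161401} and part~(4) is a direct invocation of Theorem~\ref{thm:Frobenius-closed-under-exact-seq}.
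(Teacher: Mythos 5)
Your proposal is correct and follows essentially the same route as the paper: part (1) via Lemma~\ref{lem:kernel-in-fgmod} and the abelianness of $\mathcal{D}$, part (2) via the recognition theorem \cite[Corollary 6.3.5]{MR2182076}, part (3) via Lemma~\ref{lem:fg-objects-in-C-A} for dominance, the fundamental theorem of Hopf modules for $\Rep(G)\subseteq\Ker(F)$, and Lemma~\ref{lem:normality-BN11-Prop-3-5} for normality with $U$ as the ind-adjoint, and part (4) by invoking Theorem~\ref{thm:Frobenius-closed-under-exact-seq}. The only place you are vaguer than the paper is the inclusion $\Ker(F)\subseteq\iota(\Rep(G))$, which the paper settles in one line by noting that $X$ embeds (via the unit of $\mathcal{O}(G)$, split by the counit) into $X\otimes\mathcal{O}(G)\cong\mathcal{O}(G)^{\oplus n}$ and is therefore a subobject of an object of $\Ind(\Rep(G))$, rather than by an unspecified reconstruction of a comodule structure.
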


In view of the well-studied case where $G$ is a finite group, we may call $\mathcal{D}$ the {\em de-equivariantization} of $\mathcal{C}$ by $G$ at least when $\mathcal{D}$ satisfies the assumption of this theorem.
Admittedly, in general, it is not easy to check whether $\mathcal{D}$ is a tensor category.
As discussed in \cite[Section 6]{MR4227163}, when $\mathcal{C}$ is the category of comodules over a Hopf algebra $H$, the rigidity of $\mathcal{D}$ closely relates to the faithfully flatness of $H$ over a Hopf subalgebra.

Unfortunately, we do not yet know any significant examples where the Frobenius property of a tensor category is proved by using Theorem \ref{thm:de-equivari}.
Nevertheless, we think this theorem is noteworthy in view of future applications.
As supporting evidence, we point out that the idea of the de-equivariantization by an affine algebraic group scheme is used to construct new classes of modular tensor categories related to logarithmic conformal field theories in \cite{2018arXiv180902116G,MR4227163}.

\begin{proof}[Proof of Theorem~\ref{thm:de-equivari}]
  We write $A = \mathcal{O}(G)$.

  \medskip
  (1) We recall that a tensor category is, by definition, a locally finite abelian category.
  By Lemma~\ref{lem:kernel-in-fgmod} and the assumption that $\mathcal{D}$ is a tensor category, we see that the subcategory $\mathcal{D}$ of $\Ind(\mathcal{C})_A$ is closed under kernels. Hence $\mathcal{D}$ coincides with the full subcategory of finitely presented objects of $\Ind(\mathcal{C})_A$.

  \medskip
  (2) Since $\Ind(\mathcal{C})_A$ is a Grothendieck category such that every object of $\Ind(\mathcal{C})_A$ is a filtered colimit of finitely presented objects, there is an equivalence $\Ind(\mathcal{C})_A \approx \Ind(\mathcal{D})$ of categories by \cite[Corollary 6.3.5]{MR2182076}.
  It is easy to see that this is an equivalence of linear monoidal categories.

  \medskip
  (3) By Lemma \ref{lem:fg-objects-in-Ind-C-A}, we see that the free right module functor \eqref{eq:Ind-C-A-free-functor} restricts to a tensor functor from $\mathcal{C}$ to $\mathcal{D}$, which we denote by $F : \mathcal{C} \to \mathcal{D}$.
  Lemma \ref{lem:fg-objects-in-Ind-C-A} also implies that $F$ is dominant.

  We show that $\Ker(F)$ coincides with $\Rep(G)$ viewed as a full subcategory of $\mathcal{C}$ through $\iota$.
  The fundamental theorem for Hopf modules \cite{MR1243637} implies that $\Rep(G)$ is contained in $\Ker(F)$.
  Now we suppose that an object $X \in \mathcal{C}$ belongs to $\Ker(F)$.
  Then we have $X \otimes A = F(A) \cong A^{\oplus n}$ for some non-negative integer $n$, and thus $X$ is a subobject of $A^{\oplus n} \in \Ind(\mathcal{C})$.
  This implies that $X$ belongs to $\Rep(G)$.
  Hence we have proved $\Ker(F) = \Rep(G)$.

  To complete the proof of Part (3), we shall verify the normality of $F$.
  We identify $\Ind(\mathcal{D})$ with $\Ind(\mathcal{C})_A$ by Part (1) and let $U: \Ind(\mathcal{D}) \to \Ind(\mathcal{C})$ be the functor forgetting the $A$-module structure.
  It is easy to see that $U$ is an ind-adjoint of $F$.
  The unit object of $\mathcal{D}$ is $A$, which is mapped to $A \in \Ind(\Rep(G))$ by the functor $U$. Thus, by Lemma \ref{lem:normality-BN11-Prop-3-5}, $F$ is normal.

  \medskip
  (4) The functor $U$ is obviously exact. Thus, by Theorem~\ref{thm:Frobenius-closed-under-exact-seq}, the tensor category $\mathcal{C}$ is Frobenius if and only if so are $\Rep(G)$ and $\mathcal{D}$. The proof of this theorem is completed by noting the fact that $\Rep(G)$ is Frobenius if and only if $\mathcal{O}(G)$ is co-Frobenius.
\end{proof}

\begin{example}
  Let $H$ be a Hopf algebra with bijective antipode, and let $K$ be a {\em braided central Hopf subalgebra} of $H$ \cite[Definition 3.1]{MR3160718}, that is, a Hopf subalgebra $K \subset H$ together with a bilinear map $r: H \times K \to \bfk$ satisfying certain axioms similar to those of universal R-forms.
  Then $K$ is commutative and thus we may assume that $K = \mathcal{O}(G)$ for some affine group scheme $G$.
  We set $\mathcal{C} = {}^H\Mod_{\fd}$.
  By the argument of \cite{MR3160718}, we have a central inclusion functor $\Rep(G^{\op}) \to \mathcal{C}$ induced by $r$ (here the source of the functor is set to $\Rep(G^{\op})$, which is identified with ${}^{\mathcal{O}(G)}\Mod_{\fd}$, to be consistent with the setting of \cite{MR3160718} and our notation).

  Now we suppose that there is a convolution-invertible $\mathcal{O}(G)$-linear map $\pi: H \to \mathcal{O}(G)$ preserving the unit and the counit.
  We consider the quotient coalgebra $Q := H/\mathcal{O}(G)^{+}H$, where $\mathcal{O}(G)^{+}$ is the kernel of the counit of $\mathcal{O}(G)$.
  The main result of \cite{MR3160718} states that $Q$ has a structure of a coquasi-bialgebra given explicitly in terms of $r$ and $\pi$, and the category ${}^Q\Mod$ is equivalent to the category of $\mathcal{O}(G)$-modules in $\Ind(\mathcal{C})$ as a linear monoidal category.
  Theorem \ref{thm:de-equivari} yields the following consequence:
  Suppose that the monoidal category ${}^Q\Mod_{\fd}$ is rigid.
  Then $Q$ is a QcF coalgebra if and only if both $\mathcal{O}(G)$ and $H$ are co-Frobenius.
\end{example}

\appendix
\section{The fundamental theorem for comodules}
\label{sec:fundamental-theorem}

Let $\mathcal{V}$ be a tensor category over a field $\bfk$, and let $\mathbf{C}$ be a coalgebra in $\Ind(\mathcal{V})$ with underlying object $C \in \Ind(\mathcal{V})$, comultiplication $\Delta_C$ and counit $\varepsilon_C$.
As in Subsection \ref{subsec:comod-over-hopf}, we denote by $\Ind(\mathcal{V})^{\mathbf{C}}$ and $\mathcal{V}^{\mathbf{C}}$ the category of right $\mathbf{C}$-comodules in $\Ind(\mathcal{V})$ and its full subcategory consisting of objects whose underlying object belongs to $\mathcal{V}$.
The aim of this appendix is to prove the following theorem:

\begin{theorem}
  \label{thm:appendix-fund-thm-comodules}
  $\Ind(\mathcal{V}^{\mathbf{C}}) \approx \Ind(\mathcal{V})^{\mathbf{C}}$.
\end{theorem}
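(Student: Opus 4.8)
The goal is to realize $\Ind(\mathcal{V})^{\mathbf{C}}$ as the ind-completion of its full subcategory $\mathcal{V}^{\mathbf{C}}$, which is an essentially small abelian category. First I would record that, since $(-)\otimes C$ is an exact and cocontinuous endofunctor of $\Ind(\mathcal{V})$, the category $\Ind(\mathcal{V})^{\mathbf{C}}$ is abelian and cocomplete, with finite limits and all colimits computed on underlying objects, and with exact filtered colimits. In particular it admits small filtered colimits, so the inclusion $\mathcal{V}^{\mathbf{C}}\hookrightarrow\Ind(\mathcal{V})^{\mathbf{C}}$ extends along $\iota_{\mathcal{V}^{\mathbf{C}}}$ to a colimit-preserving functor $\Phi\colon\Ind(\mathcal{V}^{\mathbf{C}})\to\Ind(\mathcal{V})^{\mathbf{C}}$. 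I would then establish the two statements (i) every object of $\mathcal{V}^{\mathbf{C}}$ is finitely presented in $\Ind(\mathcal{V})^{\mathbf{C}}$, and (ii) every $\mathbf{C}$-comodule in $\Ind(\mathcal{V})$ is a filtered colimit of $\mathbf{C}$-comodules whose underlying object lies in $\mathcal{V}$, from which the equivalence will follow.

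For (i), I would use that for $M\in\mathcal{V}^{\mathbf{C}}$ and $N\in\Ind(\mathcal{V})^{\mathbf{C}}$ the set $\Hom_{\Ind(\mathcal{V})^{\mathbf{C}}}(M,N)$ is the equalizer of the two maps $\Hom_{\Ind(\mathcal{V})}(M,N)\rightrightarrows\Hom_{\Ind(\mathcal{V})}(M,N\otimes C)$ given by $f\mapsto\delta_N\circ f$ and $f\mapsto(f\otimes\id_C)\circ\delta_M$. The underlying object of $M$ is of finite length, hence finitely presented in $\Ind(\mathcal{V})$, so $\Hom_{\Ind(\mathcal{V})}(M,-)$ preserves filtered colimits; as $(-)\otimes C$ does too, the same holds for $N\mapsto\Hom_{\Ind(\mathcal{V})}(M,N\otimes C)$. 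Since finite limits commute with filtered colimits in $\mathbf{Set}$ and filtered colimits in $\Ind(\mathcal{V})^{\mathbf{C}}$ are computed on underlying objects, $\Hom_{\Ind(\mathcal{V})^{\mathbf{C}}}(M,-)$ preserves filtered colimits, as desired.

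The substance is (ii), a fundamental-theorem-of-comodules statement, and I expect this to be the main obstacle. The subobjects $X\subseteq M$ with $X\in\mathcal{V}$ form a filtered family with colimit $M$, so it suffices to produce, for each such $X$, a subcomodule $\langle X\rangle\subseteq M$ with $X\subseteq\langle X\rangle$ and $\langle X\rangle\in\mathcal{V}$. The plan imitates the classical argument, with duality in the rigid category $\mathcal{V}$ playing the role of linear functionals on $C$, and proceeds in two steps. First, a fundamental theorem of coalgebras for $\mathbf{C}$: given $C'\in\mathcal{V}$ with $C'\subseteq C$, compactness of $C'$ lets one factor $(\Delta_C\otimes\id_C)\circ\Delta_C|_{C'}$ through $E_1\otimes E_0\otimes E_2$ for finite subobjects $E_0\supseteq C'$, $E_1$, $E_2$ of $C$; bending the outer tensor factors around by means of the evaluations $\eval_{E_1}\colon E_1^{\vee}\otimes E_1\to\unitobj$ and $\eval_{{}^{\vee}\!E_2}\colon E_2\otimes{}^{\vee}\!E_2\to\unitobj$ produces a finite subobject $D\subseteq C$, and a diagram chase with coassociativity and the counit axioms shows that $C'\subseteq D$ and $\Delta_C(D)\subseteq D\otimes D$. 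Second, given $X\subseteq M$ with $X\in\mathcal{V}$, compactness of $X$ factors $\delta_M|_X$ through $M\otimes C'$ for some finite $C'\subseteq C$; enlarging $C'$ to a finite subcoalgebra $D$ by the first step and writing $\widetilde{\delta}\colon X\to M\otimes D$ for the resulting lift of $\delta_M|_X$, one sets
\begin{equation*}
  \langle X\rangle:=\Img\Bigl(X\otimes{}^{\vee}\!D\xrightarrow{\ \widetilde{\delta}\otimes\id\ }M\otimes D\otimes{}^{\vee}\!D\xrightarrow{\ \id_M\otimes\eval_{{}^{\vee}\!D}\ }M\Bigr),
\end{equation*}
which is a finite subobject of $M$; the counit axiom gives $X\subseteq\langle X\rangle$, and coassociativity together with $\Delta_C(D)\subseteq D\otimes D$ yields $\delta_M(\langle X\rangle)\subseteq\langle X\rangle\otimes C$, so $\langle X\rangle\in\mathcal{V}^{\mathbf{C}}$. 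Hence $M=\varinjlim_X\langle X\rangle$.

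Granting (i) and (ii), $\Ind(\mathcal{V})^{\mathbf{C}}$ is a Grothendieck category --- a generator is the sum of the objects of a skeleton of $\mathcal{V}^{\mathbf{C}}$ --- whose finitely presented objects are precisely the objects of $\mathcal{V}^{\mathbf{C}}$ (by (i) and idempotent-completeness of $\mathcal{V}^{\mathbf{C}}$), and by (ii) every object is a filtered colimit of finitely presented objects; so $\Ind(\mathcal{V})^{\mathbf{C}}\approx\Ind(\mathcal{V}^{\mathbf{C}})$ by \cite[Corollary 6.3.5]{MR2182076} (and this equivalence is visibly $\Phi$). The delicate points are the two coassociativity diagram chases --- that $D$ is a subcoalgebra and that $\langle X\rangle$ is a subcomodule --- which must be carried out carefully in the non-braided rigid category $\mathcal{V}$, keeping track of which of the two duals ${}^{\vee}(-)$ or $(-)^{\vee}$ is attached on each side; the rest is routine bookkeeping with compact objects and universal properties.
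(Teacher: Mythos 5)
Your proposal is correct and follows essentially the same route as the paper: reduce to \cite[Corollary 6.3.5]{MR2182076} by showing that the objects of $\mathcal{V}^{\mathbf{C}}$ are exactly the finitely presented objects of $\Ind(\mathcal{V})^{\mathbf{C}}$ and that every comodule is a filtered colimit of such, with the crux being the local-finiteness statement (your (ii)); your object $\langle X\rangle=\Img\bigl((\id_M\otimes\eval'_D)(\widetilde{\delta}\otimes\id_{{}^{\vee}\!D})\bigr)$ is precisely the $N=\Img(\delta^{\sharp})$ of the paper's Lemma~\ref{lem:appendix-A-4}. The one structural difference is that you first enlarge $C'$ to a finite \emph{subcoalgebra} $D$ (a categorical fundamental theorem of coalgebras) and then use $\Delta_C(D)\subseteq D\otimes D$ in the subcomodule chase; the paper shows this detour is unnecessary: it only needs a factorization $\Delta_C(D_1)\subseteq C\otimes D_2$ through some finite $D_2$ (an instance of Lemma~\ref{lem:appendix-finite-sub-tensor}), because in the coassociativity identity $(\delta_M\otimes\id_C)\delta_M=(\id_M\otimes\Delta_C)\delta_M$ the left-hand leg already lands in $N$, so one diagram chase suffices and no subcoalgebra statement is ever proved. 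Your extra step is true and provable by the same bending-with-evaluations technique, but it costs you a second chase of comparable delicacy; conversely, your explicit equalizer argument for finite presentability of objects of $\mathcal{V}^{\mathbf{C}}$ is a cleaner justification than the paper's one-line assertion. The two deferred coassociativity chases are routine once set up as images of morphisms (so that exactness of $\otimes$ converts factorizations into containments of subobjects), exactly as in the paper's Lemmas~\ref{lem:appendix-delta-sharp} and~\ref{lem:appendix-A-4}, so I do not regard their omission as a gap.
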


In the case where $\mathcal{V}$ is the category of finite-dimensional vector spaces over $\bfk$, this theorem follows from the fundamental theorem for comodules.
The general case was considered by Lyubashenko in \cite[Section 2.3]{MR1625495}.
Strictly speaking, in \cite{MR1625495}, Theorem \ref{thm:appendix-fund-thm-comodules} is proved in a general setting of what Lyubashenko called squared coalgebras.
Since it may be hard to specialize the proof given in \cite{MR1625495} in our setting, we include a direct proof of Theorem \ref{thm:appendix-fund-thm-comodules} for the reader's convenience.

We first note the following easy observation:

\begin{lemma}
  \label{lem:appendix-finite-sub-tensor}
  Let $X$ and $Y$ be objects of $\Ind(\mathcal{V})$, and let $V$ be a subobject of $X \otimes Y$ of finite length. Then there are subobjects $X' \subset X$ and $Y' \subset Y$ of finite lengths such that $V \subset X' \otimes Y'$.
\end{lemma}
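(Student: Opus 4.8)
The plan is to realise $X \otimes Y$ as a filtered union of subobjects of the shape $X' \otimes Y'$ with $X'$ and $Y'$ of finite length, and then to invoke that $V$, being of finite length, is a finitely generated object of the Grothendieck category $\Ind(\mathcal{V})$ and so cannot be spread across infinitely many members of such a union.

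First I would write $X$ and $Y$ as the filtered unions of their finite-length subobjects, say $X = \varinjlim_{\lambda \in \Lambda} X_{\lambda}$ and $Y = \varinjlim_{\mu \in M} Y_{\mu}$ with the transition morphisms given by the inclusions; this is legitimate since $\mathcal{V}$ is the full subcategory of finite-length objects of $\Ind(\mathcal{V})$ and, identifying $\Ind(\mathcal{V})$ with $\Mod^{Q}$ for a suitable coalgebra $Q$ over $\bfk$, every comodule is the directed union of its finite-dimensional subcomodules. Since the tensor product of $\Ind(\mathcal{V})$ is exact and cocontinuous, for each pair $(\lambda, \mu)$ the object $X_{\lambda} \otimes Y_{\mu}$ is a subobject of $X \otimes Y$, the transition morphisms $X_{\lambda} \otimes Y_{\mu} \to X_{\lambda'} \otimes Y_{\mu'}$ remain monomorphisms, and $X \otimes Y = \varinjlim_{(\lambda, \mu) \in \Lambda \times M} X_{\lambda} \otimes Y_{\mu}$ along the filtered poset $\Lambda \times M$; thus $X \otimes Y$ is the filtered union of the subobjects $X_{\lambda} \otimes Y_{\mu}$.

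Finally I would use that $V$ has finite length and is therefore a finitely generated object of $\Ind(\mathcal{V})$. Applying the characterisation of finitely generated objects in a Grothendieck category recalled in Subsection~\ref{subsec:de-equiv} to the filtered family $\{\, V \cap (X_{\lambda} \otimes Y_{\mu}) \,\}_{(\lambda, \mu)}$ of subobjects of $V$ --- whose union is $V$, because $V \subseteq X \otimes Y = \sum_{(\lambda,\mu)} X_{\lambda} \otimes Y_{\mu}$ --- produces a pair $(\lambda_{0}, \mu_{0})$ with $V = V \cap (X_{\lambda_{0}} \otimes Y_{\mu_{0}})$, that is, $V \subseteq X_{\lambda_{0}} \otimes Y_{\mu_{0}}$. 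Setting $X' = X_{\lambda_{0}}$ and $Y' = Y_{\mu_{0}}$ completes the argument. There is no genuine obstacle here; the only point meriting attention is the identification of $X \otimes Y$ with the filtered union $\varinjlim X_{\lambda} \otimes Y_{\mu}$ of subobjects, which relies on the exactness of $\otimes$ (to keep the transition maps monic) together with its cocontinuity in each variable, both of which hold for $\Ind(\mathcal{V})$.
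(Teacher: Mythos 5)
Your proof is correct and follows essentially the same route as the paper: both express $X \otimes Y$ as the filtered union of the subobjects $X_{\lambda} \otimes Y_{\mu}$ with $X_{\lambda} \subset X$ and $Y_{\mu} \subset Y$ of finite length (using cocontinuity and exactness of $\otimes$) and then use the finiteness of $V$ to trap it inside one such piece. The only cosmetic difference is that the paper allows an arbitrary generating family, extracts finitely many indices, and sums them to form $X'$ and $Y'$, whereas you work with the directed family of all finite-length subobjects and obtain a single pair $(\lambda_0,\mu_0)$ directly.
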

\begin{proof}
  \newcommand{\Mu}{\mathrm{M}}
  We write $X$ and $Y$ as the sum of subobjects of finite lengths, as $X = \sum_{\lambda \in \Lambda} X_{\lambda}$ and $Y = \sum_{\mu \in \Mu} Y_{\mu}$.
  Since the tensor product of $\Ind(\mathcal{V})$ is cocontinuous, we have $X \otimes Y = \sum_{\lambda \in \Lambda, \mu \in \Mu} X_{\lambda} \otimes Y_{\mu}$. By the finiteness of $V$, there are finitely many elements $\lambda_1, \cdots, \lambda_{r} \in \Lambda$ and $\mu_1, \cdots, \mu_s \in \Mu$ such that $V \subset \sum_{i = 1}^r \sum_{j = 1}^s X_{\lambda_i} \otimes Y_{\mu_j}$. Now we set $X' = \sum_{i = 1}^r X_{\lambda_i}$ and $Y' = \sum_{j = 1}^s Y_{\mu_j}$. Then $V \subset X' \otimes Y'$, as desired.
\end{proof}

For objects $X, Y \in \Ind(\mathcal{V})$ and $D \in \mathcal{V}$, there is the bijection
\begin{align*}
  (-)^{\sharp} : \Hom_{\Ind(\mathcal{V})}(X, Y \otimes D) & \to
  \Hom_{\Ind(\mathcal{V})}(X \otimes {}^{\vee}\!D, Y), \\
  f & \mapsto f^{\sharp} := (\id_Y \otimes \eval'_{D})(f \otimes \id_{{}^{\vee}\!D}),
\end{align*}
where $\eval'_D : D \otimes {}^{\vee}\!D \to \unitobj$ is the evaluation morphism.

\begin{lemma}
  \label{lem:appendix-delta-sharp}
  For a morphism $f: X \to Y \otimes D$ in $\Ind(\mathcal{V})$, we have
  \begin{equation*}
    \Img(f) \subset \Img(f^{\sharp}) \otimes D, \quad
    \Ker(f) \otimes {}^{\vee}\!D \subset \Ker(f^{\sharp}).
  \end{equation*}
\end{lemma}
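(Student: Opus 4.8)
The plan is to recover $f$ from $f^{\sharp}$ by a zig-zag identity and then invoke the exactness of the tensor product of $\Ind(\mathcal{V})$. Write $\coev'_D \colon \unitobj \to {}^{\vee}\!D \otimes D$ for the coevaluation morphism of the dual pair $(D, {}^{\vee}\!D)$, so that, together with $\eval'_D$, it satisfies the zig-zag identity $(\eval'_D \otimes \id_D) \circ (\id_D \otimes \coev'_D) = \id_D$. First I would check the reconstruction formula
\[
  f = (f^{\sharp} \otimes \id_D) \circ (\id_X \otimes \coev'_D),
\]
where $\id_X \otimes \coev'_D \colon X \to X \otimes {}^{\vee}\!D \otimes D$ and $f^{\sharp} \otimes \id_D \colon X \otimes {}^{\vee}\!D \otimes D \to Y \otimes D$. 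Indeed, substituting $f^{\sharp} = (\id_Y \otimes \eval'_D)(f \otimes \id_{{}^{\vee}\!D})$ into the right-hand side and using the interchange law, the composite becomes $\bigl(\id_Y \otimes (\eval'_D \otimes \id_D)(\id_D \otimes \coev'_D)\bigr) \circ f$, which collapses to $f$ by the zig-zag identity. This is the only place where the duality structure enters.

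Next I would exploit that $D$, lying in the rigid category $\mathcal{V}$, makes $(-) \otimes D$ an exact endofunctor of $\Ind(\mathcal{V})$, since the tensor product of $\Ind(\mathcal{V})$ is exact and cocontinuous. An exact functor carries the epi--mono factorization $X \otimes {}^{\vee}\!D \twoheadrightarrow \Img(f^{\sharp}) \hookrightarrow Y$ to the epi--mono factorization of $f^{\sharp} \otimes \id_D$, whence $\Img(f^{\sharp} \otimes \id_D) = \Img(f^{\sharp}) \otimes D$. Combining this with the reconstruction formula and the elementary inclusion $\Img(g \circ h) \subseteq \Img(g)$ in an abelian category gives $\Img(f) \subseteq \Img(f^{\sharp} \otimes \id_D) = \Img(f^{\sharp}) \otimes D$, which is the first assertion.

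For the second assertion I would use the defining factorization $f^{\sharp} = (\id_Y \otimes \eval'_D) \circ (f \otimes \id_{{}^{\vee}\!D})$ directly. From $\Ker(g) \subseteq \Ker(h \circ g)$ we get $\Ker(f \otimes \id_{{}^{\vee}\!D}) \subseteq \Ker(f^{\sharp})$, and the exactness (left exactness already suffices) of $(-) \otimes {}^{\vee}\!D$ on $\Ind(\mathcal{V})$ identifies $\Ker(f \otimes \id_{{}^{\vee}\!D})$ with $\Ker(f) \otimes {}^{\vee}\!D$; hence $\Ker(f) \otimes {}^{\vee}\!D \subseteq \Ker(f^{\sharp})$. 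I do not expect any real obstacle here; the only point demanding attention is the bookkeeping with the evaluation/coevaluation conventions for the right dual, so that the zig-zag identity is applied to the $D$-leg rather than the ${}^{\vee}\!D$-leg.
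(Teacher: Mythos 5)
Your proof is correct and follows essentially the same route as the paper: the paper likewise recovers $f$ as $(f^{\sharp}\otimes\id_D)(\id_X\otimes\coev'_D)$ via the zig-zag identity and uses the exactness of $\otimes$ on $\Ind(\mathcal{V})$, merely phrasing the image step through the cokernel of $f^{\sharp}$ rather than the epi--mono factorization, and dispatching the kernel inclusion as ``the dual argument'' where you spell it out directly from the definition of $f^{\sharp}$.
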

\begin{proof}
  Let $\pi : Y \to \Coker(f^{\sharp})$ be the cokernel of $f^{\sharp}$. Then we have
  \begin{equation*}
    (\pi \otimes \id_{D}) f
    = (\pi \otimes \id_{D}) (f^{\sharp} \otimes \id_{D}) (\id_X \otimes \coev'_{D}) = 0,
  \end{equation*}
  where $\coev'_{D} : \unitobj \to {}^{\vee}\!D \otimes D$ is the coevaluation. Hence,
  \begin{equation*}
    \Img(f) \subset \Ker(\pi \otimes \id_{D})
    = \Ker(\pi) \otimes D
    = \Img(f^{\sharp}) \otimes D,
  \end{equation*}
  where we have used the exactness of $\otimes$.
  $\Ker(f) \otimes {}^{\vee}\!D \subset \Ker(f^{\sharp})$ is proved by the dual argument.
\end{proof}

Given a morphism $f: X \to Y$ in $\Ind(\mathcal{V})$ and a subobject $V \subset X$, we define $f(V)$ to be the image of $f$ restricted to the subobject $V \subset X$.
Now we prove the following analogue of the fundamental theorem for comodules:

\begin{lemma}
  \label{lem:appendix-A-4}
  Let $\mathbf{M}$ be a right $\mathbf{C}$-comodule in $\Ind(\mathcal{V})$ with underlying object $M \in \Ind(\mathcal{V})$ and coaction $\delta_M : M \to M \otimes C$, and let $V$ be a subobject of $M$ of finite length. Then there is a subobject $N$ of $M$ of finite length such that $V \subset N$ and $\delta_M(N) \subset N \otimes C$.
\end{lemma}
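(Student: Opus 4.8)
The plan is to construct $N$ as the image of a ``partial evaluation'' of the coaction $\delta_M$, categorifying the classical description of the subcomodule generated by a finite set: if $\delta(v)=\sum_i v_i\otimes c_i$, then $N$ is the span of the $v_i$, and the role of pairing against a dual basis of the span of the $c_i$ is played by the operation $(-)^{\sharp}$ of Lemma~\ref{lem:appendix-delta-sharp}. Concretely: since $V$ has finite length, so does $\delta_M(V)$, so Lemma~\ref{lem:appendix-finite-sub-tensor} produces subobjects $M_0\subseteq M$ and $C_0\subseteq C$ of finite length with $\delta_M(V)\subseteq M_0\otimes C_0$; in particular $C_0$ lies in $\mathcal{V}$ and has a right dual ${}^{\vee}\!C_0$. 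Let $g\colon V\to M\otimes C_0$ be the factorization of $\delta_M|_V$ through $M\otimes C_0$ and $\iota\colon C_0\hookrightarrow C$ the inclusion, so $\delta_M|_V=(\id_M\otimes\iota)\circ g$. I would then set $N:=\Img(g^{\sharp})$, where $g^{\sharp}\colon V\otimes{}^{\vee}\!C_0\to M$ is as in Lemma~\ref{lem:appendix-delta-sharp}, and let $q\colon M\to M/N$ be the cokernel of $N\hookrightarrow M$. Then $N$ has finite length, being a quotient of $V\otimes{}^{\vee}\!C_0\in\mathcal{V}$. To see $V\subseteq N$ --- the categorical form of $v=\sum_i v_i\,\varepsilon(c_i)$ --- note that by the counit axiom the inclusion $V\hookrightarrow M$ equals $(\id_M\otimes(\varepsilon_C\iota))\circ g$; writing the morphism $\varepsilon_C\iota\colon C_0\to\unitobj$ via the duality as evaluation against a morphism $e\colon\unitobj\to{}^{\vee}\!C_0$ shows that this composite is $g^{\sharp}$ precomposed with $\id_V\otimes e$, hence factors through $N=\Img(g^{\sharp})$.

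The heart of the proof is to show $\delta_M(N)\subseteq N\otimes C$, equivalently that $\delta_M\circ g^{\sharp}$ factors through $N\otimes C$. I would first record two facts: (i) Lemma~\ref{lem:appendix-delta-sharp} gives $\Img(g)\subseteq\Img(g^{\sharp})\otimes C_0=N\otimes C_0=\Ker(q\otimes\id_{C_0})$, so $(q\otimes\id_{C_0})\circ g=0$; and (ii) since $\delta_M$ and the evaluation $C_0\otimes{}^{\vee}\!C_0\to\unitobj$ act on disjoint tensor factors, a short computation with the definition of $(-)^{\sharp}$ identifies $\delta_M\circ g^{\sharp}$ with $h^{\sharp}$, where $h:=(\delta_M\otimes\id_{C_0})\circ g\colon V\to(M\otimes C)\otimes C_0$. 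Coassociativity of the coaction then gives $(\id_{M\otimes C}\otimes\iota)\circ h=(\id_M\otimes\Delta_C)\circ\delta_M|_V=(\id_M\otimes(\Delta_C\iota))\circ g$. Applying $q\otimes\id_{C\otimes C}$ to this identity, the right-hand side equals $(\id_{M/N}\otimes(\Delta_C\iota))\circ(q\otimes\id_{C_0})\circ g=0$ by (i); since $\otimes$ is exact, $\id_{(M/N)\otimes C}\otimes\iota$ is monic, so $(q\otimes\id_{C\otimes C_0})\circ h=0$, and then (using once more that the evaluation acts on a disjoint factor) $(q\otimes\id_C)\circ h^{\sharp}=0$. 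Hence $\delta_M\circ g^{\sharp}=h^{\sharp}$ factors through $\Ker(q\otimes\id_C)=N\otimes C$, as required. Together with the first paragraph, $N$ is then a finite-length subobject of $M$ containing $V$ with $\delta_M(N)\subseteq N\otimes C$, which completes the proof.

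The step I expect to be most delicate is the one just described. The natural worry is that the second tensor leg of $\Delta_C$ need not stay inside $C_0$, which in the classical proof is handled by choosing a complement --- something unavailable for the subobject $C_0\subseteq C$ in $\Ind(\mathcal{V})$, and which one might be tempted to repair by first proving a ``fundamental theorem for coalgebras'' inside $\Ind(\mathcal{V})$. The observation that makes the argument go through without any such extra input is that this ambiguity disappears after passing to the quotient $q\colon M\to M/N$: it is precisely fact (i), $(q\otimes\id_{C_0})\circ g=0$, that annihilates the unwanted contributions when coassociativity is applied. Apart from that, the main step is pure bookkeeping --- identifying $\delta_M\circ g^{\sharp}$ with $h^{\sharp}$ and tracking the coassociativity relation through $q$ --- using only Lemma~\ref{lem:appendix-delta-sharp} and the exactness of the tensor product.
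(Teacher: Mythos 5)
Your proposal is correct, and it constructs the same object as the paper: $N=\Img(g^{\sharp})$ for the corestriction $g\colon V\to M\otimes C_0$ of $\delta_M|_V$, with $V\subseteq N$ extracted from the counit axiom via the duality. The only genuine divergence is in the verification that $\delta_M(N)\subseteq N\otimes C$. The paper applies Lemma~\ref{lem:appendix-finite-sub-tensor} a \emph{second} time to produce a finite-length $D_2\subseteq C$ with $\Delta_C(D_1)\subseteq C\otimes D_2$ (and $D_1\subseteq D_2$), applies $(-)^{\sharp}$ with respect to $D_2$ to the coassociativity identity, and compares images using that ${}^{\vee}\!j$ is an epimorphism; this mirrors the classical proof, where one enlarges the finite-dimensional subcoalgebra so that both legs of $\Delta$ land in it. You avoid the second finite subobject entirely: composing with the cokernel $q\colon M\to M/N$, the identity $(q\otimes\id_{C_0})\circ g=0$ (from Lemma~\ref{lem:appendix-delta-sharp}) kills the right-hand side of the coassociativity relation, and since $\id_{(M/N)\otimes C}\otimes\iota$ is monic by exactness of $\otimes$, you conclude $(q\otimes\id_{C})\circ\delta_M\circ g^{\sharp}=(q\otimes\id_C)\circ h^{\sharp}=0$, i.e.\ $\delta_M(N)\subseteq\Ker(q)\otimes C=N\otimes C$. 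I checked the identification $\delta_M\circ g^{\sharp}=h^{\sharp}$ and the passage of the vanishing through $(-)^{\sharp}$; both are sound. Your route is marginally leaner (one application of Lemma~\ref{lem:appendix-finite-sub-tensor} instead of two, and no epimorphism argument with ${}^{\vee}\!j$), at the cost of a slightly longer diagram chase; the paper's route stays closer to the familiar Sweedler-notation proof.
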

\begin{proof}
  By Lemma \ref{lem:appendix-finite-sub-tensor}, there is a subobject $D_1 \subset C$ of finite length such that $\delta_M(V) \subset M \otimes D_1$. Let $\delta : V \to M \otimes D_1$ be the morphism induced by $\delta_M : M \to M \otimes C$, and set $N = \Img(\delta^{\sharp} : V \otimes {}^{\vee}\!D_1 \to M)$. Then $N$ is a subobject of $M$ of finite length as the image of an object of finite length.
  Moreover, we have
  \begin{equation*}
    \delta(V) \subset \Img(\delta^{\sharp}) \otimes D_1 = N \otimes D_1
  \end{equation*}
  by Lemma \ref{lem:appendix-delta-sharp}. Thus, from now on, we regard $\delta$ as a morphism from $V$ to $N \otimes D_1$. We note that, by the counit axiom and Lemma \ref{lem:appendix-delta-sharp}, we have
  \begin{equation*}
    V
    = (\id_M \otimes \varepsilon_C|_{D_1}) \delta(V)
    \subset (\id_M \otimes \varepsilon_C|_{D_1}) (\Img(\delta^{\sharp}) \otimes D_1)
    = \Img(\delta^{\sharp}) = N.
  \end{equation*}
  To complete the proof, we show that $\delta_M(N) \subset N \otimes C$.
  By Lemma \ref{lem:appendix-finite-sub-tensor}, there is a subobject $D_2 \subset C$ of finite length such that $\Delta_C(D_1) \subset C \otimes D_2$. By taking the image of these objects under $\varepsilon_C \otimes \id_C$, we obtain $D_1 \subset D_2$. Now let $\Delta: D_1 \to C \otimes D_2$ be the morphism induced by the comultiplication $\Delta_C : C \to C \otimes C$, and let $i: N \to M$ and $j : D_1 \to D_2$ be the inclusion morphisms. Then we have
  \begin{equation*}
    (\id_M \otimes \id_C \otimes j)(\delta_M i \otimes \id_{D_1}) \delta
    = (i \otimes \Delta) \delta
  \end{equation*}
  by the coassociativity of the coaction.
  By applying the operator
  \begin{equation*}
    (-)^{\sharp}: \Hom_{\Ind(\mathcal{V})}(V, M \otimes C \otimes D_2)
    \to \Hom_{\Ind(\mathcal{V})}(V \otimes {}^{\vee}\!D_2, M \otimes C)
  \end{equation*}
  to the both sides and regarding $\delta^{\sharp}$ as a morphism from $M \otimes {}^{\vee}\!D_1$ to $N$, we obtain
  \begin{equation*}
    \delta_M i \delta^{\sharp} (\id_V \otimes {}^{\vee}\!j)
    = (\id_M \otimes \eval'_{D_2}) (i \otimes \id_C \otimes \Delta \otimes \id_{{}^{\vee}\!D_2}) (\delta \otimes \id_{{}^{\vee}\!D_2}).
  \end{equation*}
  The morphism ${}^{\vee}\!j$ is an epimorphism as a dual of the monomorphism $j$.
  Since $\delta^{\sharp} (\id_V \otimes {}^{\vee}\!j) : V \otimes {}^{\vee}D_2 \to N$ is an epimorphism, the image of the left hand side is $\delta_M(N)$.
  It is obvious that the image of the right hand side is contained in $N \otimes C$. Thus $\delta_M(N) \subset N \otimes C$. The proof is done.
\end{proof}

\begin{proof}[Proof of Theorem \ref{thm:appendix-fund-thm-comodules}]
  By Lemma \ref{lem:appendix-A-4}, we see that the class of objects of $\mathcal{V}^{\mathbf{C}}$, the class of finitely generated objects of $\Ind(\mathcal{V})^{\mathbf{C}}$ and the class of finitely presented objects of $\Ind(\mathcal{V})^{\mathbf{C}}$ coincide.
  Moreover, every object of $\Ind(\mathcal{V})^{\mathbf{C}}$ is a filtered colimit of objects of $\mathcal{V}^{\mathbf{C}}$.
  Thus, by \cite[Corollary 6.3.5]{MR2182076}, there is an equivalence $\Ind(\mathcal{V})^{\mathbf{C}} \approx \Ind(\mathcal{V}^{\mathbf{C}})$ of categories.
\end{proof}

\def\cprime{$'$}

\end{document}